\numberwithin{equation}{section} % Equation numbering per section
\definecolor{urlcolor}{rgb}{0,0,0}
\definecolor{linkcolor}{rgb}{.7,0.10,0.2}
\definecolor{citecolor}{rgb}{.12,.54,.11}
\declaretheoremstyle[
    spaceabove=9pt, spacebelow=9pt,
    postheadspace=.5em,
    headfont=\normalfont\bfseries,
    headpunct={},
    headformat={\NUMBER.\@\NOTE},
    notefont=\normalfont\bfseries\boldmath,
    notebraces={}{.},
]{para}
\theoremstyle{plain}
\newtheorem{theorem}{Theorem}[section]
\newtheorem{proposition}[theorem]{Proposition}
\newtheorem{lemma}[theorem]{Lemma}
\newtheorem{conjecture}[theorem]{Conjecture}
\newtheorem{corollary}[theorem]{Corollary}
\theoremstyle{definition}
\newtheorem{remark}[theorem]{Remark}
\newtheorem{example}[theorem]{Example}
\newtheorem*{ACK}{Acknowledgement}
\newtheorem{definition}[theorem]{Definition}
\theoremstyle{para}
\newtheorem{para}[theorem]{}
\numberwithin{equation}{section}
\newcommand{\Aut}{\mathrm{Aut}}  % Automorphism group
\renewcommand{\bar}{\overline}  % Overline
\renewcommand{\setminus}{\smallsetminus}  % Set difference
\newcommand{\BA}{\mathbb{A}}
\newcommand{\BG}{\mathbb{G}}
\newcommand{\BN}{\mathbb{N}}
\newcommand{\BQ}{\mathbb{Q}}
\newcommand{\BZ}{\mathbb{Z}}
\newcommand{\CF}{\mathcal{F}}
\newcommand{\CG}{\mathcal{G}}
\newcommand{\CH}{\mathcal{H}}
\newcommand{\CL}{\mathcal{L}}
\newcommand{\CS}{\mathcal{S}}
\newcommand{\CU}{\mathcal{U}}
\newcommand{\CW}{\mathcal{W}}
\newcommand{\CX}{\mathcal{X}}
\newcommand{\CY}{\mathcal{Y}}
\newcommand{\CZ}{\mathcal{Z}}
\newcommand{\SL}{\mathscr{L}}
\newcommand{\SP}{\mathscr{P}}
\newcommand{\rmB}{\mathrm{B}}  % Betti number
\newcommand{\BPS}{\mathcal{BPS}}  % BPS states
\newcommand{\GL}{\mathrm{GL}}  % General linear group
\newcommand{\Fg}{\mathfrak{g}} %Lie algebra
\newcommand{\rmH}{\mathrm{H}}  %Cohomology
\newcommand{\Spec}{\mathrm{Spec}}  % Spectrum
\newcommand{\Sym}{\mathrm{Sym}}  % Symmetric group
\newcommand{\rmc}{\mathrm{c}}
\newcommand{\cms}{/\!\!/}
\newcommand{\Face}{\mathsf{Face}}
\newcommand{\pH}{{^\mathrm{p}\!\CH}}
\newcommand{\ICS}{\mathcal{IC}}
\newcommand{\id}{\mathrm{id}}  % Identity map
\newcommand{\IH}{\mathrm{IH}} % Intersection cohomology
\newcommand{\gr}{\mathrm{gr}}  % Graded space
\newcommand{\pt}{\mathrm{pt}}
\newcommand{\rmm}{\mathrm{m}}
\newcommand{\nd}{\mathrm{nd}}
\newcommand{\sgn}{\mathrm{sgn}}
\renewcommand{\sp}{\mathrm{sp}}
\renewcommand{\Spec}{\mathrm{Spec}}  % Spectrum
\newcommand{\Tan}{\mathrm{T}}
\newcommand{\Ft}{\mathfrak{t}}
\newcommand{\vir}{\mathrm{vir}} % virtual shift
\newcommand{\rmX}{\mathrm{X}}
\renewcommand{\gr}{\mathrm{gr}}
\newcommand{\ev}{\mathrm{ev}}
\DeclareMathOperator{\vdim}{vdim}
\newcommand{\GIT}{{/\! \! /}}
\renewcommand{\Im}{\operatorname{Im}}
\newcommand{\cofib}{\operatorname{cofib}}
\title{The BPS decomposition theorem}
\date{\today}
\author{Lucien Hennecart}
\address{Laboratoire Ami\'enois de Math\'ematique Fondamentale et Appliqu\'ee, CNRS UMR 7352, Universit\'e de Picardie Jules Verne, 33 rue Saint Leu, 80000 Amiens, France}
\email{lucien.hennecart@u-picardie.fr}
\author{Tasuki Kinjo}
\address{Research Institute for Mathematical Sciences, Kyoto University, Kyoto 606-8502, Japan}
\email{tkinjo@kurims.kyoto-u.ac.jp}
\begin{document}

\begin{abstract}

    We prove the BPS decomposition theorem (a.k.a. cohomological integrality theorem) decomposing the cohomology of smooth symmetric stacks into the Weyl-invariant part of the cohomological Hall induction of the intersection cohomology of good moduli spaces. 
    As a consequence, we establish the BPS decomposition theorem for the Borel--Moore homology of $0$-shifted symplectic stacks and for the critical cohomology of symmetric $(-1)$-shifted symplectic stacks, thereby generalizing the main result of Bu--Davison--Ibáñez Nuñez--Kinjo--Pădurariu to the non-orthogonal setting.

    We will present three applications of our main result.
    First, we confirm Halpern-Leistner's conjecture on the purity of the Borel--Moore homology of $0$-shifted symplectic stacks admitting proper good moduli spaces, extending Davison's work on the moduli stack of objects in $2$-Calabi--Yau categories.
    Second, we prove versions of Kirwan surjectivity for the critical cohomology of symmetric $(-1)$-shifted symplectic stacks and for the Borel--Moore homology of $0$-shifted symplectic stacks.
    Finally, by applying our main result to the character stacks associated with compact oriented $3$-manifolds, we reduce the quantum geometric Langlands duality conjecture for $3$-manifolds, as formulated by Safronov, from an isomorphism between infinite-dimensional critical cohomologies to a comparison of finite-dimensional BPS cohomologies.
    \end{abstract}

\maketitle

\setcounter{tocdepth}{1}
\tableofcontents

\section{Introduction}
\label{section:introduction}
In algebraic geometry, moduli stacks parametrize objects of interest in a way that reflects symmetries. One considers for example moduli stacks of coherent sheaves on smooth projective varieties, of representations of algebras in representation theory, or of local systems on algebraic varieties. Historically, it has proven useful to produce algebraic varieties from moduli problems, using Geometric Invariant Theory (GIT) \cite{mumford1994geometric}. Nowadays, it is understood that GIT produces meaningful approximations of the corresponding moduli stacks \cite{alper2013good}.

The cohomology of moduli spaces is the host of rich enumerative invariants that have drawn the attention of many studies. In algebraic geometry, refined Donaldson--Thomas invariants \cite{thomas2000holomorphic} are cohomological invariants. In representation theory, Kac polynomials of quivers are also cohomological invariants, which can be identified with refined BPS invariants of a $3$-Calabi--Yau category built out of the quiver \cite{hausel2013positivity,davison2018purity}.

In this paper, we prove the BPS decomposition theorem for symmetric stacks, completing the study of \cite{hennecart2024cohomological,hennecart2024cohomological2} and \cite{bu2025cohomology}. The BPS decomposition bridges the cohomology of a large class of stacks of interest to the intersection cohomology of the good moduli space, and can be regarded as an effective form of the BBDG decomposition theorem for the good moduli space morphisms.

The notion of the BPS decomposition originally appears in the context of cohomological Hall algebras, in the work of Kontsevich and Soibelman \cite{kontsevich2011cohomological}. In this situation, it amounts to the boundedness of some invariants. At the level of vector spaces, it takes the form of the PBW theorem, i.e.,  an identification of the underlying vector space of the cohomological Hall algebra with the symmetric power of another vector space. It has been studied thoroughly, in particular in \cite{efimov2012cohomological} for symmetric quivers, in \cite{davison2020cohomological} for quivers with potential, in \cite{davison2023bps} for general $2$-Calabi--Yau categories, including the categories of coherent sheaves on K3 and Abelian surfaces, and in \cite{bu2025cohomology} for general $3$-Calabi--Yau categories satisfying the appropriate geometric conditions.

The BPS decomposition for symmetric stacks have recently been introduced from diverse perspectives. In \cite{hennecart2024cohomological}, we prove the BPS decomposition theorem for stacks of the form $V/G$ where $V$ is a  symmetric representation of a reductive group in a purely combinatorial way. This decomposition involves some finite-dimensional vector spaces $P_{\lambda}$ indexed by cocharacters $\lambda$ of a maximal torus $T$ of $G$ -- the \emph{algebraic BPS cohomologies} of $V/G$. We did not give a geometric interpretation of these $P_{\lambda}$. In \cite{hennecart2024cohomological2}, we proved a version of the BPS decomposition theorem at the sheafified level, which holds for quotient stacks $X/G$ of a smooth affine $G$-variety ($G$ a reductive group). The vector spaces $P_{\lambda}$ are replaced by \emph{cohomologically graded complexes of monodromic mixed Hodge modules} which are denoted $\BPS_{X/G,\lambda}$. More recently \cite{bu2025cohomology}, the BPS decomposition theorem has been generalized to any smooth stack having a good moduli space and satisfying some appropriate assumptions (see \S\ref{para:assumptions-stacks}). The moduli stacks dealt with are \emph{orthogonal stacks}, a strengthening of the notion of symmetric stacks. In \cite{bu2025cohomology}, it is proven directly that in the orthogonal case, the BPS decomposition involves the intersection cohomology and not undetermined complexes. This implies for example that, in the orthogonal case, the vector spaces $P_{\lambda}$ of \cite{hennecart2024cohomological} are given (when non-zero) by the intersection cohomology of the affine GIT quotients.

In the present paper, we combine a certain vanishing bound for the vector spaces $P_{\lambda}$ given in \cite{hennecart2024cohomological} with the vanishing cycle functoriality of the cohomological Hall integrality map of \cite{bu2025cohomology}, together with a fine study of the asymptotic behaviour of the Hall induction to prove a strong version of the BPS decomposition theorem for any symmetric stacks satisfying the geometric assumptions \Cref{para:assumptions-stacks}. As a consequence, we obtain the BPS decomposition for any $0$-shifted symplectic stacks and we extend the scope of the BPS decomposition theorem for $(-1)$-shifted symplectic stacks in \cite{bu2025cohomology} from orthogonal to more general symmetric stacks. As a special case, we obtain the BPS decomposition for the stack of $G$-local systems on arbitrary compact oriented $3$-manifolds, a case that was left open in \cite{bu2025cohomology} due to the lack of obvious orthogonal structure. This case is relevant to the Langlands-type conjecture for character stacks of $3$-manifolds (\Cref{conj-3mfd}).

\subsection{Main results}
We now briefly present our main results. Let $V$ be a representation of a (possibly disconnected) reductive group $G$. We let $T\subset G^{\circ}$ be a maximal torus of the neutral component of $G$ and we denote by $\CW(V)$ the collection of $T$-weights of $V$ counted with multiplicities. %\orange{(We denote by $\overline{\CW}(V)$ the collection of $T$-weights of $V$ counted with multiplicities, up to positive scalars.) We don't need this anymore since we work only with symmetric stacks or almost symmetric stacks}

\begin{definition}
\label{definition:notions_symmetric_stacks}
Let $V$ be a representation of a (possibly disconnected) reductive group $G$. We say that
 \begin{enumerate}
  \item $V$ is a \emph{self-dual} or \emph{symmetric} representation if $V\cong V^*$ as $G$-representations.
  \item $V$ is an \emph{almost self-dual} or \emph{almost symmetric} representation if $V\cong V^*$ as $G^{\circ}$-representations, equivalently if $\CW(V)=\CW(V^*)$.
%   \item $V$ is \emph{weakly symmetric} if $\overline{\CW}(V)=\overline{\CW}(V^*)$.
 \end{enumerate}

Let $\CX$ be an algebraic stack. We say that
\begin{enumerate}
 \item $\CX$ is \emph{self-dual} or \emph{symmetric} if for any closed point $x\in\CX$, the tangent space $\Tan_{\CX,x}$ is a self-dual representation of the stabilizer group $G_x$.
 \item $\CX$ is \emph{almost self-dual} or \emph{almost symmetric} if for any closed point $x\in\CX$, the tangent space $\Tan_{\CX,x}$ is an almost self-dual representation of $G_x$ (i.e. a self-dual representation of $G_x^{\circ}$).
%  \item $\CX$ is \emph{weakly symmetric} if for any closed point $x\in \CX$, the tangent space $\Tan_{\CX,x}$ is a weakly symmetric representation of $G_x$.
\end{enumerate}
\end{definition}

Clearly, in \Cref{definition:notions_symmetric_stacks}, we have the implication $(1)\implies (2)$.

In this paper, we use freely the notion of \emph{component lattices} of algebraic stacks introduced in \cite{bu2025intrinsic} and summarized in \Cref{ssec-component-lattice}. This notion extends that of flats and chambers for quotient stack of the form $V/G$ where $V$ is a representation of a reductive group $G$, see for example \cite[\S3]{hennecart2024cohomological} for this viewpoint. In particular, we may use the categories of \emph{faces} $\Face(\CX)$, \emph{non-degenerate faces} $\Face^{\nd}(\CX)$ and \emph{special faces} $\Face^{\sp}(\CX)$ of the rationalized component lattice $\mathrm{CL}_{\BQ}(\CX)$ of $\CX$.

\begin{para}[Assumptions on stacks]
\label{para:assumptions-stacks}
In this paper, we keep the running assumptions of \cite[\S1.2.5]{bu2025cohomology} regarding stacks.
%, except that we replace the symmetricity assumption of (3) by the slightly more general notion of weak symmetricity (\Cref{definition:notions_symmetric_stacks}).
Namely, we let $\CX$ be an Artin stack satisfying the following assumptions:

\begin{enumerate}
 \item $\CX$ has affine diagonal and a good moduli space $p\colon\CX\rightarrow X$.
 \item $\CX$ has quasi-compact connected components and quasi-compact graded points.
 \item $\CX$ is almost symmetric.
 \item For each non-degenerate face $(F,\alpha)\in\Face^{\nd}(\CX)$, there exists a global equivariant parameter for $\CX_{\alpha}$; see \S \ref{ssec-global-equiv-parameter} for the detail.
\end{enumerate}
\end{para}

For any $(F,\alpha)\in\Face(\CX)$, there is a natural commutative square
% https://q.uiver.app/#q=WzAsNCxbMSwwLCJcXEZYIl0sWzEsMSwiXFxDWCJdLFswLDEsIlxcQ1hfe1xcYWxwaGF9Il0sWzAsMCwiXFxGWF97XFxhbHBoYX0iXSxbMCwxLCJwIl0sWzMsMF0sWzMsMiwicF97XFxhbHBoYX0iLDJdLFsyLDEsImdfe1xcYWxwaGF9IiwyXV0=
\[\begin{tikzcd}
	{\CX_{\alpha}} & \CX \\
	{X_{\alpha}} & X
	\arrow[from=1-1, to=1-2]
	\arrow["{p_{\alpha}}"', from=1-1, to=2-1]
	\arrow["p", from=1-2, to=2-2]
	\arrow["{g_{\alpha}}"', from=2-1, to=2-2]
\end{tikzcd}\]
connecting the respective good moduli spaces $X$ and $X_{\alpha}$ of $\CX$ and $\CX_{\alpha}$.
\begin{theorem}
 \label{theorem:main_theorem}
Let $\CU$ be a smooth algebraic stack satisfying the assumptions \Cref{para:assumptions-stacks}. Then there exists an isomorphism of monodromic mixed Hodge module complexes on $U$:
\begin{equation}\label{eq-intro-main}
 \bigoplus_{(F,\alpha)\in\Face^{\sp}(\CU)}(g_{\alpha,*}\ICS^{\circ}_{U_{\alpha}}\otimes\rmH^*(\rmB\BG_{\rmm}^{\dim F})_{\vir}\otimes\sgn_{\alpha})^{\Aut(\alpha)}\cong p_*\ICS_{\mathcal{U}}\,,
\end{equation}
where the direct sum runs over isomorphism classes of special faces, 
\[
 \ICS_{U_{\alpha}}^{\circ}=\left\{
 \begin{aligned}
  &\ICS_{U_{\alpha}}\quad\text{if the neutral component of the generic stabilizer of a closed point of $\CX_{\alpha}$ is isomorphic to $\BG_{\rmm}^{\dim F}$} \\
  &0\quad\text{otherwise}\,,
 \end{aligned}\right.
\]
$\rmH^*(\rmB\BG_{\rmm}^{\dim F})_{\vir}\coloneqq \rmH^*(\rmB\BG_{\rmm}^{\dim F})\otimes\SL^{\dim F/2}$, $\SL$ is the Tate twist, and $\sgn_{\alpha}$ denotes the cotangent sign representation, \cite[\S4.4.5]{bu2025cohomology}.
In particular, we have an isomorphism 
\[
	\bigoplus_{(F,\alpha)\in\Face^{\sp}(\CU)} (\IH^{\circ} (U_{\alpha})\otimes\rmH^*(\rmB\BG_{\rmm}^{\dim F})_{\vir}\otimes\sgn_{\alpha})^{\Aut(\alpha)}\cong \mathrm{H}^*(\mathcal{U})_{\mathrm{vir}},
\]
where $\IH^{\circ} (U_{\alpha})$ and $\mathrm{H}^*(\mathcal{U})_{\mathrm{vir}}$ are the result of the derived global section functor applied to $\mathcal{IC}^{\circ}_{U_{\alpha}}$ and $\mathcal{IC}_{\mathcal{U}}$ respectively.
\end{theorem}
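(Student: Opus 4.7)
The plan is to deduce the theorem from two largely independent inputs: a cohomological bound on the algebraic BPS cohomology coming from Efimov's lemma, and the vanishing-cycle argument already used in \cite{bu2025cohomology} for orthogonal stacks. First I would invoke the sheafified cohomological integrality isomorphism of \cite{hennecart2024cohomological2, bu2025cohomology}, which for an almost symmetric stack $\CU$ satisfying \Cref{para:assumptions-stacks} produces an abstract decomposition
\[
p_*\ICS_\CU \cong \bigoplus_{(F,\alpha)\in\Face^{\sp}(\CU)} \bigl(g_{\alpha,*}\BPS_{\CU,\alpha} \otimes \rmH^*(\rmB\BG_\rmm^{\dim F})_{\vir} \otimes \sgn_\alpha\bigr)^{\Aut(\alpha)}
\]
of monodromic mixed Hodge module complexes on $U$, where $\BPS_{\CU,\alpha}$ is an a priori abstract cohomologically graded complex on $U_\alpha$. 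Thus \eqref{eq-intro-main} reduces to identifying $\BPS_{\CU,\alpha} \cong \ICS^\circ_{U_\alpha}$ for each special face $(F,\alpha)$, and in particular to showing that $\BPS_{\CU,\alpha}$ vanishes outside the locus where the neutral component of the generic stabilizer matches $\BG_\rmm^{\dim F}$.

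Next, I would check that each $\BPS_{\CU,\alpha}$ is pure. This follows because $\ICS_\CU$ is pure, the relevant parts of the pushforward $p_*\ICS_\CU$ inherit purity under our properness assumptions, and $\BPS_{\CU,\alpha}$ is cut out as a direct summand by the splitting above. By the decomposition theorem, $\BPS_{\CU,\alpha}$ decomposes as a direct sum of Tate twists and shifts of IC sheaves of closed irreducible substacks of $U_\alpha$.

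Having reduced to understanding which IC summands appear, I would apply the cohomological bound of \cite{hennecart2024cohomological}, whose proof rests on Efimov's ``almost zero'' lemma for complexes over polynomial algebras. This bound forces $\BPS_{\CU,\alpha}$ to be concentrated in a single cohomological degree and to contain only IC summands whose support is all of $U_\alpha$, or else to vanish outright; the case distinction in the definition of $\ICS^\circ_{U_\alpha}$ should be matched to the dichotomy between vanishing and non-vanishing coming out of this bound. To pin down the surviving summand, I would then import the vanishing-cycle trick of \cite{bu2025cohomology}: after reducing étale-locally to a quotient model $V/G$, introduce an auxiliary quadratic function whose critical locus reproduces $\CU$, and use the vanishing-cycle functor to transport the known orthogonal identification to the present setting. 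This determines both the multiplicity one and the correct cohomological shift of the remaining IC summand.

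The main obstacle is precisely the symmetric-but-not-orthogonal case, which is where this paper goes beyond \cite{bu2025cohomology}. In the orthogonal setting, the orthogonal structure directly provides the quadratic potential required for the vanishing-cycle comparison and simultaneously rigidifies the BPS complex. In the merely almost symmetric setting no such structure is available, and a priori $\BPS_{\CU,\alpha}$ could contain IC sheaves of proper closed substacks or sit in the wrong cohomological degree. The Efimov-type bound is what must eliminate these possibilities before the vanishing-cycle identification can even be formulated, and the delicate point will be to verify that the combined constraints from the bound and from the vanishing-cycle comparison exactly determine $\BPS_{\CU,\alpha}$ as stated.
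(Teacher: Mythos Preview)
Your high-level ingredients are right, but the logical architecture differs from the paper's in a way that matters, and there are two concrete gaps.

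First, the paper does \emph{not} start from an abstract decomposition and then identify the $\BPS$ pieces. It defines $\BPS_U^\alpha$ directly as $\pH^0(p_{\alpha,*}\ICS_{\CU_\alpha}\otimes\SL^{-\dim F/2})$, which is already known to be $\ICS^\circ_{U_\alpha}$ by \Cref{proposition:smallness_gms}, and then constructs an explicit map $\Psi_\CU$ via cohomological Hall induction whose bijectivity is the content of the theorem. Your first step assumes a decomposition for general almost symmetric stacks that neither \cite{hennecart2024cohomological2} (quotient stacks only, with unidentified $\BPS$) nor \cite{bu2025cohomology} (orthogonal only) actually supplies in this generality.

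Second, you overclaim what the Efimov-type bound delivers. The bound (\Cref{thm-cohomological-bound}) says only that the cokernel $P_{\tilde\CU}$ of the Hall-induction map from nonzero faces is concentrated in \emph{non-positive} degrees. It does not by itself force anything to be a single IC sheaf with full support. In the paper this one-sided bound is played off against Artin vanishing for $\rmH^*(\BPS^{\alpha_{\min}})$ and the positivity of the remaining pieces to conclude \emph{surjectivity} of $\rmH^*(\Psi_{\tilde\CU})$; injectivity then requires a separate asymptotic dimension-count argument comparing growth rates of $(\rmH^*(\rmB T)\otimes\sgn)^W$ against $\rmH^*(\rmB T)^W$, which your proposal does not mention.

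Third, you correctly identify that the symmetric-but-not-orthogonal case lacks an obvious quadratic potential, but you do not say how to produce one. The paper's key move is to replace $V/G$ by $\tilde\CU=(V\times\Fg_{\mathrm{ad}})/G$, where the Killing form on $\Fg_{\mathrm{ad}}$ supplies the quadratic function; the vanishing-cycle argument (\Cref{prop-crit-reduction}) then transfers integrality from $\tilde\CU$ back to $V/G$. This, together with a double induction on $\dim G$ and $\dim V$ and the \'etale slice reduction, is the actual skeleton of the proof, and your sketch does not articulate it.
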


\begin{remark}\label{rmk-T-action}
	If there is an auxiliary torus $T'$ acting on $\mathcal{U}$, it acts naturally on $\mathcal{U}_{\alpha}$ and $U_{\alpha}$, and the BPS decomposition \eqref{eq-intro-main} is a decomposition of $T'$-equivariant complexes over $U$.
	This follows from the fact that the PBW map is induced by the cohomological Hall induction (see \S \ref{ssec-cohi}), which is clearly $T'$-equivariant by construction.
	In particular, we obtain a natural isomorphism
	\[
	\bigoplus_{(F,\alpha)\in\Face^{\sp}(\CU)} (\IH_{T'}^{\circ} (U_{\alpha})\otimes\rmH^*(\rmB\BG_{\rmm}^{\dim F})_{\vir}\otimes\sgn_{\alpha})^{\Aut(\alpha)}\cong \mathrm{H}_{T'}^*(\mathcal{U})_{\mathrm{vir}},
	\]
	where $\IH_{T'}^{\circ}(U_{\alpha})$ denotes the $T'$-equivariant intersection cohomology of $U_{\alpha}$ if the neutral component of the generic stabilizer of a closed point of $\CU_{\alpha}$ is isomorphic to $\BG_{\rmm}^{\dim F}$ and $\IH_{T'}^{\circ}(U_{\alpha})=0$ otherwise, and $\mathrm{H}_{T'}^*(\mathcal{U})_{\mathrm{vir}}$ is the $T'$-equivariant cohomology of $\CU$.
\end{remark}

We now provide a more explicit form of \Cref{theorem:main_theorem} for quotient stacks.
Let $V$ be an almost symmetric representation of a connected reductive group $G$. We let $T$ be a maximal torus of $G$, $W$ be the Weyl group of $G$ and $\SP_{V}$ the quotient of the lattice of cocharacters $\Lambda_T$ by the equivalence relation $\lambda\sim\mu\iff (V^{\lambda}=V^{\mu}\text{ and } L_{\lambda}=L_{\mu}$), where $L_{\lambda}$ denotes the centralizer of $\lambda$, a Levi subgroup of $G$. The class of $\lambda\in\Lambda_T$ in $\SP_{V}$ is denoted by $\overline{\lambda}$. We have a commutative diagram
% https://q.uiver.app/#q=WzAsNCxbMSwwLCJWL0ciXSxbMSwxLCJWXFxjbXMgRyJdLFswLDEsIlZee1xcbGFtYmRhfVxcY21zIEdee1xcbGFtYmRhfSJdLFswLDAsIlZee1xcbGFtYmRhfS9HXntcXGxhbWJkYX0iXSxbMywyLCJwX3tcXGxhbWJkYX0iLDJdLFswLDEsInAiXSxbMywwXSxbMiwxLCJnX3tcXGxhbWJkYX0iLDJdXQ==
\[\begin{tikzcd}
	{V^{\lambda}/L_{\lambda}} & {V/G} \\
	{V^{\lambda}\cms L_{\lambda}} & {V\cms G}
	\arrow[from=1-1, to=1-2]
	\arrow["{p_{\lambda}}"', from=1-1, to=2-1]
	\arrow["p", from=1-2, to=2-2]
	\arrow["{g_{\lambda}}"', from=2-1, to=2-2]\,.
\end{tikzcd}\]

For $\lambda\in\Lambda_T$, we let $W_{\lambda}\coloneqq \{w\in W\mid \overline{w\cdot\lambda}=\overline{\lambda}\}/W^{\lambda}$ be the centralizer of $\overline{\lambda}$ in $W$ modulo the Weyl group $W^{\lambda}$ of $L_{\lambda}$.

\begin{theorem}
 \label{theorem:local_case}
Let $V$ be a  symmetric representation of a connected reductive group $G$. Then, there exists an isomorphism
\[
 \bigoplus_{\tilde{\lambda}\in\SP_V/W}(g_{\lambda,*}\ICS_{V^{\lambda}\cms L_{\lambda}}^{\circ}\otimes \rmH^*_{G_{\lambda}}(\pt)_{\vir}\otimes\varepsilon_{V/G,\lambda})^{W_{\lambda}}\cong p_*\ICS_{V/G}
\]
where
\[
 G_{\lambda}=\mathrm{Z}(L_{\lambda})\cap\ker(L_{\lambda}\rightarrow\GL(V^{\lambda}))
\]
is the intersection of the center of $L_{\lambda}$ with the kernel of the action of $L_{\lambda}$ on $V^{\lambda}$, $\varepsilon_{V/G,\lambda}\colon W_{\lambda}\rightarrow\{\pm1\}$ denotes the cotangent sign representation of $V/G$ (also defined purely combinatorially in \cite[Proposition~4.6]{hennecart2024cohomological})
and
\[
\ICS_{V^{\lambda}\cms L_{\lambda}}^{\circ}=\left\{
 \begin{aligned}
&\ICS_{V^{\lambda}\cms L_{\lambda}}\quad\text{if a general closed orbit in $V^{\lambda}$ has finite stabilizer in $L_{\lambda}/G_{\lambda}$\,,}\\
&0\quad\text{otherwise}\,.
 \end{aligned}\right.
\]

\end{theorem}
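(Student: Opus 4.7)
The plan is to deduce \Cref{theorem:local_case} directly from \Cref{theorem:main_theorem} applied to $\CU=V/G$, by translating the geometric component-lattice data of $V/G$ into the explicit cocharacter language used in the statement. I would first check that $V/G$ satisfies the running assumptions of \Cref{para:assumptions-stacks}: the stack is smooth, symmetric by hypothesis, has affine diagonal, and admits the affine GIT quotient $V\cms G$ as a good moduli space; the quasi-compactness of connected components and of graded points, and the existence of a global equivariant parameter (given by central cocharacters into $\BG_{\rmm}^{\dim F}$), are standard for affine quotient stacks of connected reductive groups.

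Next, I would unpack the component lattice of $V/G$, following the dictionary of \cite[\S3]{hennecart2024cohomological}. Graded points of $V/G$ correspond to pairs of a cocharacter $\lambda\in\Lambda_T$, taken up to $W$-conjugation, and a $\lambda$-fixed point in $V$, so that the associated stack of graded points is $\CX_{\alpha}=V^{\lambda}/L_{\lambda}$ with good moduli space $X_{\alpha}=V^{\lambda}\cms L_{\lambda}$, fitting into the commutative square preceding the statement. Two cocharacters define the same face precisely when the pair $(V^{\lambda},L_{\lambda})$ coincides, so isomorphism classes of faces are indexed by $\SP_V/W$. The automorphism group of the face associated to $\lambda$ is exactly $W_{\lambda}$, and the flat $F$ has dimension equal to the rank of $G_{\lambda}=\mathrm{Z}(L_{\lambda})\cap\ker(L_{\lambda}\to\GL(V^{\lambda}))$, since $G_{\lambda}$ is precisely the subtorus of $\mathrm{Z}(L_{\lambda})$ along which $\lambda$ can be deformed without changing the pair $(V^{\lambda},L_{\lambda})$. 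Consequently $\rmH^*(\rmB\BG_{\rmm}^{\dim F})_{\vir}\cong \rmH^*_{G_{\lambda}}(\pt)_{\vir}$.

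With this dictionary in hand, I would identify the remaining decorations. The cotangent sign character $\sgn_{\alpha}\colon W_{\lambda}\to\{\pm1\}$ coincides with the combinatorial sign $\varepsilon_{V/G,\lambda}$ of \cite[Proposition~4.6]{hennecart2024cohomological}, since both record the action of $W_{\lambda}$ on the determinant of the $\lambda$-attracting part of the cotangent representation of $V/G$. Likewise, the condition that the neutral component of the generic stabilizer of a closed point of $\CX_{\alpha}=V^{\lambda}/L_{\lambda}$ is isomorphic to $\BG_{\rmm}^{\dim F}$ translates into the generic closed $L_{\lambda}$-orbit in $V^{\lambda}$ having finite stabilizer in $L_{\lambda}/G_{\lambda}$, since this generic stabilizer always contains $G_{\lambda}$ and the two sides of the equivalence both detect the \emph{special} faces. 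Substituting these identifications into \eqref{eq-intro-main} yields the claimed isomorphism.

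The main obstacle I expect is not conceptual but one of careful bookkeeping: verifying that the geometric component-lattice formalism of \cite{bu2025intrinsic} produces the combinatorial data $\SP_V/W$, $W_{\lambda}$, $G_{\lambda}$, and $\varepsilon_{V/G,\lambda}$ on the nose, and in particular that the sums over isomorphism classes of special faces and over $\tilde{\lambda}\in\SP_V/W$ (with the $\ICS^{\circ}$ vanishing doing the work of selecting special faces) match term by term with the correct multiplicities and signs. Once this dictionary is in place, \Cref{theorem:local_case} follows as a direct specialization of \Cref{theorem:main_theorem}.
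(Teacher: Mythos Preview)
Your proposal is correct and matches the paper's own approach: Remark~1.7 immediately following the theorem states that \Cref{theorem:local_case} is the particular case $\CU=V/G$ of \Cref{theorem:main_theorem}, and the paper gives no separate proof beyond this observation. The dictionary you outline between special faces and $\SP_V/W$, between $\Aut(\alpha)$ and $W_{\lambda}$, between $\dim F$ and $\rank G_{\lambda}$, and between $\sgn_{\alpha}$ and $\varepsilon_{V/G,\lambda}$ is exactly the content of the Example in \S\ref{ssec-component-lattice} combined with \cite[\S3]{hennecart2024cohomological}.
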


\begin{remark}
\Cref{theorem:local_case} for symmetric representations of reductive groups is a particular case of \Cref{theorem:main_theorem}, namely the case $\CU=V/G$. However, it turns out that \Cref{theorem:local_case} implies \Cref{theorem:main_theorem} via \'etale slices and an induction, and this is precisely how we prove \Cref{theorem:main_theorem} in \Cref{sec-proof-main}.
\end{remark}

As an application of \Cref{theorem:main_theorem}, we will prove $(-1)$- and $0$-shifted symplectic versions of the BPS decomposition theorem in \Cref{thm-main--1}.
Here, we explain the statement, specializing to the character stacks of $3$-manifolds for the sake of the exposition.
Let $M$ be a compact oriented $3$-manifold and $G$ be a connected reductive algebraic group.
We let $\mathcal{L}\mathrm{oc}_G(M)$ denote the moduli stack of $G$-local systems on $M$ and $\mathrm{Loc}_G(M)$ be its good moduli space.
It follows from \cite{pantev2013shifted}, \cite{ben2015darboux} and \cite{naef2023torsion} that there exists a canonical perverse sheaf $\varphi_{\mathcal{L}{\mathrm{oc}}_G(M)}$ on $\mathcal{L}{\mathrm{oc}}_G(M)$.
We define the critical cohomology of $\mathcal{L}{\mathrm{oc}}_G(M)$ as
\[
 \mathrm{H}^*_{\mathrm{crit}}(\mathcal{L}{\mathrm{oc}}_G(M)) \coloneqq \mathrm{H}^*(\mathcal{L}{\mathrm{oc}}_G(M), \varphi_{\mathcal{L}{\mathrm{oc}}_G(M)}).	
\]
The $(-1)$-shifted version of the BPS decomposition theorem implies the following:
\begin{theorem}\label{thm-characterstack-intro}
	For any Levi subgroup $L$ of $G$, there exists a finite dimensional subspace $\mathrm{H}^*_{\mathrm{BPS}}({\mathrm{Loc}}_L(M)) \subset  \mathrm{H}^*_{\mathrm{crit}}(\mathcal{L}{\mathrm{oc}}_L(M))$, the relative Weyl group $W_G(L)$ acts on the BPS cohomology $\mathrm{H}^*_{\mathrm{BPS}}({\mathrm{Loc}}_L(M))$, and there exists a natural isomorphism
    \[
     \bigoplus_{\substack{L \subset G : \\ \textnormal{Levi subgroups}}} \left( \mathrm{H}^*_{\mathrm{BPS}}({\mathrm{Loc}}_L(M)) \otimes \mathrm{H}^*( \mathrm{B} \mathrm{Z}(L))_{\mathrm{vir}} \right)^{W_G(L)} \cong  \mathrm{H}^*_{\mathrm{crit}}(\mathcal{L}{\mathrm{oc}}_G(M)),
    \]
where the direct sum runs over Levi subgroups of $G$ up to conjugation.
\end{theorem}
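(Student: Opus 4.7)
The plan is to deduce \Cref{thm-characterstack-intro} as a direct specialization of the $(-1)$-shifted symplectic version of the cohomological integrality theorem (\Cref{thm-main--1}) applied to the character stack $\mathcal{L}\mathrm{oc}_G(M)$. The work consists in verifying the hypotheses of that theorem for $\mathcal{L}\mathrm{oc}_G(M)$ and translating the resulting sum over special faces into a sum indexed by conjugacy classes of Levi subgroups of $G$.

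First I would confirm the geometric input. The $(-1)$-shifted symplectic structure on $\mathcal{L}\mathrm{oc}_G(M)$ is produced by the AKSZ construction of \cite{pantev2013shifted} applied to the compact oriented $3$-manifold $M$, and its canonical orientation---required in order to globalize the vanishing-cycle construction into the perverse sheaf $\varphi_{\mathcal{L}\mathrm{oc}_G(M)}$---is supplied by \cite{naef2023torsion}. I would then verify the stack hypotheses of \Cref{para:assumptions-stacks}: affine diagonal and the existence of the good moduli space $\mathrm{Loc}_G(M)$ follow from the local presentation of $\mathcal{L}\mathrm{oc}_G(M)$ as a quotient of an affine scheme by a reductive group together with \cite{alper2013good}; quasi-compactness of connected components and of graded points follows from the finite generation of $\pi_1(M)$; almost symmetry is automatic from the $(-1)$-shifted symplectic structure, since the tangent complex is self-dual up to a shift; and the global equivariant parameter assumption reduces, via \'etale slices, to the quotient-stack case already established in \cite{bu2025cohomology}.

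The heart of the argument is the identification of the component lattice of $\mathcal{L}\mathrm{oc}_G(M)$. Closed points of $\mathcal{L}\mathrm{oc}_G(M)$ correspond to semisimple representations $\rho$ of $\pi_1(M)$ with stabilizer the Levi subgroup $L=\mathrm{Z}_G(\rho)$, and, using the intrinsic component lattice formalism of \cite{bu2025intrinsic}, one verifies that isomorphism classes of special faces of $\mathcal{L}\mathrm{oc}_G(M)$ are in natural bijection with $G$-conjugacy classes of Levi subgroups of $G$. For the special face $\alpha$ corresponding to a Levi $L$ one has $\mathcal{L}\mathrm{oc}_G(M)_\alpha\simeq\mathcal{L}\mathrm{oc}_L(M)$ with good moduli space $\mathrm{Loc}_L(M)$, $\Aut(\alpha)=\mathrm{N}_G(L)/L=W_G(L)$, and $\dim F=\dim\mathrm{Z}(L)$, so that $\rmH^*(\rmB\BG_{\rmm}^{\dim F})_{\vir}=\rmH^*(\rmB\mathrm{Z}(L))_{\vir}$. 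The cotangent sign representation $\sgn_\alpha$ is trivialized by the canonical orientation of \cite{naef2023torsion} and therefore does not appear in the final statement. I would then define $\mathrm{H}^*_{\mathrm{BPS}}(\mathrm{Loc}_L(M))$ as the BPS summand at the generic face produced by \Cref{thm-main--1} applied to $\mathcal{L}\mathrm{oc}_L(M)$, and verify that the face-$\alpha$ BPS summand of $\mathcal{L}\mathrm{oc}_G(M)$ is identified with this $\mathrm{H}^*_{\mathrm{BPS}}(\mathrm{Loc}_L(M))$, the $W_G(L)$-action being inherited from the $\Aut(\alpha)$-action supplied by \Cref{thm-main--1}.

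The main obstacle will be the precise identification of the special faces of $\mathcal{L}\mathrm{oc}_G(M)$ with conjugacy classes of Levi subgroups of $G$, together with a clean verification that the chosen orientation trivializes the sign representation $\sgn_\alpha$; the remaining hypotheses reduce to local verifications imported from the quotient-stack setting of \cite{bu2025cohomology}.
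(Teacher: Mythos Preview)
Your approach is correct and matches the paper's: both deduce the result by specializing \Cref{thm-main--1} to $\mathcal{L}\mathrm{oc}_G(M)$, citing \cite{pantev2013shifted} and \cite{naef2023torsion} for the $(-1)$-shifted symplectic structure and orientation, \cite{bu2025cohomology} for the verification of the assumptions in \Cref{para:assumptions-stacks}, and then translating the face-indexed sum into a sum over conjugacy classes of Levi subgroups (the paper imports this translation from \cite[Theorem 10.3.30]{bu2025cohomology} rather than redoing it). One small correction: the absence of $\sgn_{\alpha}$ in the final formula is not due to the orientation of \cite{naef2023torsion} trivializing it---the cotangent sign simply does not appear in the $(-1)$-shifted integrality map \eqref{eq-coh-int--1} at all (contrast with \eqref{eq-intro-main} and \eqref{eq-coh-int-0}); the role of the orientation is rather to define $\varphi_{\mathcal{X}_{\alpha}}$ and its $\Aut(\alpha)$-equivariance via the induced orientation $\alpha^{\star}o$.
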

We refer to \Cref{thm:main-characterstack} for the precise statement.
The map from left to right is defined by the $3$-manifold analogue of the Eisenstein series as in \cite[Corollary 9.12]{kinjo2024cohomological}.
By \Cref{thm-characterstack-intro}, we can regard $\mathrm{H}^*_{\mathrm{BPS}}({\mathrm{Loc}}_G(M))$ as a finite-dimensional model for the critical cohomology.
It has an application to the quantum geometric Langlands duality for $3$-manifolds, which is formulated as follows:
\begin{conjecture}[Safronov]\label{conj-3mfd}
	Let $G$ be a connected reductive group and $G^{\vee}$ be the Langlands dual of $G$. Then there is a natural isomorphism
	\[
		\mathrm{H}^*_{\mathrm{crit}}(\mathcal{L}{\mathrm{oc}}_G(M)) \cong 		\mathrm{H}^*_{\mathrm{crit}}(\mathcal{L}{\mathrm{oc}}_{G^{\vee}}(M)).
	\]
\end{conjecture}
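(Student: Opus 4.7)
The plan is to exploit \Cref{thm-characterstack-intro} to reduce \Cref{conj-3mfd} to a Langlands-style comparison of BPS cohomologies, one Levi class at a time. Applying the cohomological integrality isomorphism to both $\mathcal{L}\mathrm{oc}_G(M)$ and $\mathcal{L}\mathrm{oc}_{G^{\vee}}(M)$ decomposes each critical cohomology as a direct sum indexed by conjugacy classes of Levi subgroups. Since the assignment $L \mapsto L^{\vee}$ is a canonical bijection between conjugacy classes of Levi subgroups of $G$ and of $G^{\vee}$, with $W_G(L) \cong W_{G^{\vee}}(L^{\vee})$ and $\mathrm{Z}(L)$, $\mathrm{Z}(L^{\vee})$ dual tori, the auxiliary factors $\mathrm{H}^*(\mathrm{B}\mathrm{Z}(L))_{\mathrm{vir}}$ and $\mathrm{H}^*(\mathrm{B}\mathrm{Z}(L^{\vee}))_{\mathrm{vir}}$ are abstractly isomorphic as $W_G(L)$-representations. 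Thus \Cref{conj-3mfd} reduces to producing, for every Levi $L \subset G$, a natural $W_G(L)$-equivariant isomorphism
\[
 \mathrm{H}^*_{\mathrm{BPS}}(\mathrm{Loc}_L(M)) \cong \mathrm{H}^*_{\mathrm{BPS}}(\mathrm{Loc}_{L^{\vee}}(M)).
\]

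For the toric case $L = T$, the stack $\mathcal{L}\mathrm{oc}_T(M)$ is smooth and $\mathrm{Loc}_T(M) = \mathrm{Hom}(\pi_1(M), T)$ is a disjoint union of tori indexed by characters of $\mathrm{H}_1(M,\mathbb{Z})_{\mathrm{tors}}$. Here the BPS cohomology is an explicit piece of $\mathrm{H}^*(\mathrm{Loc}_T(M))$, and the desired isomorphism with the $T^{\vee}$-side should follow from a Pontryagin--Poincar\'e duality argument on $\mathrm{H}_1(M,\mathbb{Z})$: Pontryagin duality matches $\mathrm{Hom}(\pi_1(M), T)$ with $\mathrm{Hom}(\pi_1(M), T^{\vee})$ up to an orientation twist, and Poincar\'e duality on $M$ allows one to pair off the relevant sign representations and Tate twists. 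In principle this treats every case in which $L^{\circ}$ is abelian, and an induction on the semisimple rank, applying \Cref{thm-characterstack-intro} inside every proper Levi, reduces the whole statement to the cuspidal case $L = G$.

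The cuspidal case is the real obstacle: there is no further combinatorial reduction, and matching $\mathrm{H}^*_{\mathrm{BPS}}(\mathrm{Loc}_G(M))$ with $\mathrm{H}^*_{\mathrm{BPS}}(\mathrm{Loc}_{G^{\vee}}(M))$ is effectively the cuspidal part of a topological Langlands correspondence for $3$-manifolds. A natural strategy is to realize both cuspidal BPS cohomologies as the value on $M$ of a fully extended topological field theory coming from a Kapustin--Witten twist of $\mathcal{N}=4$ super Yang--Mills, and to deduce the isomorphism from the conjectural S-duality equivalence between the $G$- and $G^{\vee}$-TQFTs, in the spirit of the Betti geometric Langlands program of Ben-Zvi--Nadler and its $3$-dimensional analogues developed by Safronov. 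Making this S-duality mathematically precise at the level of BPS cohomology is precisely what prevents closing the argument; already the case $M = \Sigma \times S^1$ seems to require essentially the full geometric Langlands equivalence on $\Sigma$, so I would not expect a proof of \Cref{conj-3mfd} in full generality without substantial new input coming from geometric Langlands or topological field theory.
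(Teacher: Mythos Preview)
The statement you are asked to prove is labeled a \emph{Conjecture} in the paper and is not proved there. The paper's only contribution in this direction is the Corollary immediately following it, which says that \Cref{conj-3mfd} would follow from an isomorphism
\[
\mathrm{H}^*_{\mathrm{BPS}}(\mathrm{Loc}_G(M)) \cong \mathrm{H}^*_{\mathrm{BPS}}(\mathrm{Loc}_{G^{\vee}}(M))
\]
as representations of the subgroups of the outer automorphism groups preserving the $(-1)$-shifted symplectic structures, for all connected reductive $G$. Your proposal arrives at essentially the same reduction via \Cref{thm-characterstack-intro}, and you correctly and honestly acknowledge that the cuspidal step is open and would require substantial new input from geometric Langlands or TQFT. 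So there is no gap to name: neither you nor the paper claims a proof.

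Two minor points of comparison. First, the paper packages the needed input as a single statement for $G$ itself, equivariant for outer automorphisms; since $W_G(L) \subset \mathrm{Out}(L)$, applying that statement to every Levi $L$ recovers your Levi-by-Levi formulation. Second, your discussion of the toric base case and the inductive reduction on semisimple rank goes somewhat beyond what the paper spells out, but it does not change the endpoint: both approaches isolate the cuspidal comparison $\mathrm{H}^*_{\mathrm{BPS}}(\mathrm{Loc}_G(M)) \cong \mathrm{H}^*_{\mathrm{BPS}}(\mathrm{Loc}_{G^{\vee}}(M))$ as the genuine open problem.
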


\Cref{thm-characterstack-intro} reduces this conjecture to the comparison of the more tractable finite-dimensional BPS cohomologies:

\begin{corollary}
	To prove \Cref{conj-3mfd}, it is enough to prove an isomorphism
	\[
	\mathrm{H}^*_{\mathrm{BPS}}({\mathrm{Loc}}_G(M)) \cong 		\mathrm{H}^*_{\mathrm{BPS}}({\mathrm{Loc}}_{G^{\vee}}(M))
	\]
	as representations over the subgroups of the outer automorphism group of $G$ preserving the $(-1)$-shifted symplectic structures.
\end{corollary}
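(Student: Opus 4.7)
The plan is to apply the decomposition of Theorem~\ref{thm-characterstack-intro} to both $\mathcal{L}\mathrm{oc}_G(M)$ and $\mathcal{L}\mathrm{oc}_{G^{\vee}}(M)$ and match the resulting summands via Langlands duality at the level of Levi subgroups. Langlands duality provides a canonical bijection $L \leftrightarrow L^{\vee}$ between conjugacy classes of Levi subgroups of $G$ and of $G^{\vee}$ (a Levi corresponds to a subset of simple roots, which duality swaps with the dual subset of simple coroots), under which there are natural isomorphisms of relative Weyl groups $W_G(L) \cong W_{G^{\vee}}(L^{\vee})$ and an identification of $\mathrm{Z}(L^{\vee})$ with the Langlands dual torus of $\mathrm{Z}(L)$. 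In particular, the cohomologies $\mathrm{H}^*(\mathrm{B}\mathrm{Z}(L))_{\mathrm{vir}}$ and $\mathrm{H}^*(\mathrm{B}\mathrm{Z}(L^{\vee}))_{\mathrm{vir}}$ are canonically isomorphic as $W_G(L)$-modules, since both are symmetric algebras on the rank-$\dim \mathrm{Z}(L)$ cocharacter lattice with matched Weyl group action.

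Granting the assumed BPS isomorphism uniformly for every Langlands dual pair of reductive groups, I would apply it to each pair $(L, L^{\vee})$ to obtain isomorphisms $\mathrm{H}^*_{\mathrm{BPS}}(\mathrm{Loc}_L(M)) \cong \mathrm{H}^*_{\mathrm{BPS}}(\mathrm{Loc}_{L^{\vee}}(M))$ equivariant under the respective subgroups of $\mathrm{Out}(L)$ and $\mathrm{Out}(L^{\vee})$ preserving the $(-1)$-shifted symplectic structures. Since $W_G(L) = N_G(L)/L$ acts on $L$ by conjugation with inner automorphisms acting trivially on the cohomology of $\mathcal{L}\mathrm{oc}_L(M)$, the action factors through $\mathrm{Out}(L)$; since it extends to the ambient stack $\mathcal{L}\mathrm{oc}_G(M)$, it must preserve the $(-1)$-shifted symplectic structure inherited from there. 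Hence the BPS iso is $W_G(L)$-equivariant, and tensoring with the matched center cohomology, taking $W_G(L)$-invariants on each Levi summand, and summing over Levi classes yields the sought isomorphism between critical cohomologies of $\mathcal{L}\mathrm{oc}_G(M)$ and $\mathcal{L}\mathrm{oc}_{G^{\vee}}(M)$.

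The principal subtlety is verifying that the relative Weyl group $W_G(L)$ acts on $L$ through a subgroup of $\mathrm{Out}(L)$ that both preserves the $(-1)$-shifted symplectic structure and is compatible under Langlands duality with the analogous subgroup for $L^{\vee} \subset G^{\vee}$, so that the assumed BPS isomorphism actually splices into a $W_G(L)$-equivariant isomorphism on each summand. Once this structural compatibility is established, the remainder of the argument is a formal manipulation of the summand decomposition of Theorem~\ref{thm-characterstack-intro}.
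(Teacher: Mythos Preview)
Your approach is correct and is precisely the argument the paper leaves implicit: the Corollary is stated immediately after \Cref{thm-characterstack-intro} with the sentence ``\Cref{thm-characterstack-intro} reduces this conjecture to the comparison of the more tractable finite-dimensional BPS cohomologies'' and no further proof, so you have supplied the details the authors omit. Your identification of the key structural inputs (the Langlands bijection on Levi conjugacy classes, the matching $W_G(L)\cong W_{G^\vee}(L^\vee)$, the identification of $\rmH^*(\rmB\mathrm{Z}(L))_{\vir}$ on both sides, and the fact that $W_G(L)$ acts through symplectic-preserving outer automorphisms) is exactly what is needed, and your flagged subtlety about the $W_G(L)$-equivariance is the only point requiring genuine care.
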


Now we turn our attention to $0$-shifted symplectic stacks.
By the dimensional reduction theorem proved by the second author \cite[Theorem 4.14]{kinjo2022dimensional},
the $(-1)$-shifted symplectic version of the BPS decomposition theorem implies a version for $0$-shifted symplectic stacks: see \Cref{thm-main0} for the precise statement.
As an application, we will deduce Halpern-Leistner's purity conjecture \cite[Conjecture 4.4]{halpern2015theta}:

\begin{theorem}
	Let $\mathcal{Y}$ be a $0$-shifted symplectic stack with proper good moduli space.
	Then the Borel--Moore homology $\mathrm{H}^{\mathrm{BM}}_*(\mathcal{Y})$ is pure.
\end{theorem}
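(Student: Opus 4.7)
The plan is to derive the purity statement directly from the $0$-shifted symplectic version of the cohomological integrality theorem (\Cref{thm-main0} in the excerpt, which itself follows from \Cref{theorem:main_theorem} via the dimensional reduction theorem of \cite{kinjo2022dimensional}). First, observe that a $0$-shifted symplectic stack is automatically symmetric in the sense of \Cref{definition:notions_symmetric_stacks}, since the symplectic pairing induces a self-duality of $\Tan_{\mathcal{Y},x}$ for every closed point $x$. Together with the properness assumption on $Y$ (which in particular implies quasi-compactness of the connected components), the running hypotheses \Cref{para:assumptions-stacks} are satisfied, and the $0$-shifted integrality isomorphism yields a decomposition
\[
\mathrm{H}^{\mathrm{BM}}_*(\mathcal{Y}) \;\cong\; \bigoplus_{(F,\alpha)\in\Face^{\sp}(\mathcal{Y})} \bigl(\IH^{\circ}(Y_{\alpha})\otimes \rmH^*(\rmB\BG_{\rmm}^{\dim F})_{\vir}\otimes \sgn_{\alpha}\bigr)^{\Aut(\alpha)}
\]
as graded mixed Hodge structures.

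The next step is to verify that each good moduli space $Y_{\alpha}$ of graded points is proper. The natural morphism $g_{\alpha}\colon Y_{\alpha}\to Y$ is finite in the component-lattice formalism of \cite{bu2025intrinsic}, since the stack of graded points $\mathcal{Y}_{\alpha}$ admits a finite evaluation morphism onto its image in $\mathcal{Y}$ which descends to good moduli spaces. Properness of $Y$ therefore propagates to properness of each $Y_{\alpha}$. By the Beilinson--Bernstein--Deligne purity theorem, the intersection cohomology $\IH^{*}(Y_{\alpha})$ of a proper complex algebraic variety is a pure Hodge structure, pure of weight equal to the cohomological degree. The classifying-stack factor $\rmH^*(\rmB\BG_{\rmm}^{\dim F})_{\vir}$ is pure of Tate type (the virtual twist by $\SL^{\dim F/2}$ centers the weights), $\sgn_{\alpha}$ is a Hodge-trivial local system, and taking invariants under the finite group $\Aut(\alpha)$ preserves purity.

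Combining these ingredients, each summand contributes a pure graded piece, and direct sums of Hodge structures pure in each degree remain pure in each degree; hence $\mathrm{H}^{\mathrm{BM}}_*(\mathcal{Y})$ is pure, as required. The main obstacle I anticipate is the bookkeeping of weights: one must verify that the cohomological Hall induction underlying the integrality isomorphism is a morphism of (monodromic) mixed Hodge modules for which the virtual shifts $\otimes\SL^{\dim F/2}$ align the weights of the summands $\IH^{\circ}(Y_{\alpha})$ with the cohomological degrees in $\mathrm{H}^{\mathrm{BM}}_*(\mathcal{Y})$ so that purity in each degree is actually achieved. This compatibility is essentially built into the sheaf-level formulation of \Cref{theorem:main_theorem}, so the remaining verification amounts to confirming that the dimensional-reduction identification used to pass from the $(-1)$-shifted statement to the $0$-shifted statement respects weights — a standard but important check.
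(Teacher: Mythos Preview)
Your overall strategy—decompose via cohomological integrality, verify that each summand is pure, conclude—is the paper's strategy, but you have misidentified the summands. The decomposition you wrote down, with pieces $\IH^{\circ}(Y_{\alpha})$, is the form of \Cref{theorem:main_theorem}, which applies to \emph{smooth} almost symmetric stacks. A $0$-shifted symplectic stack is indeed automatically symmetric, but it need not be smooth (think of moduli of semistable sheaves on a K3 surface in non-coprime rank and degree), so \Cref{theorem:main_theorem} does not apply to $\mathcal{Y}$ directly. The relevant statement is \Cref{thm-main0}, and its left-hand side involves the BPS sheaves $\mathcal{BPS}_Y^{\alpha}$, defined via dimensional reduction from the $(-1)$-shifted cotangent; these are \emph{not} intersection complexes of $Y_{\alpha}$ in general, so your appeal to purity of $\IH^*(Y_{\alpha})$ for proper $Y_{\alpha}$ does not establish what is needed.

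Once you replace $\IH^{\circ}(Y_{\alpha})$ by $\mathrm{H}^*(Y_{\alpha},\mathcal{BPS}_Y^{\alpha})$, the missing input is the purity of $\mathcal{BPS}_Y^{\alpha}$ as a Hodge module on $Y_{\alpha}$—this is a nontrivial theorem, \cite[Theorem~7.2.15]{bu2025cohomology}, and is exactly what the paper invokes. With that in hand the argument is in fact simpler than yours: since $g_{\alpha}$ is finite, each $g_{\alpha,*}\mathcal{BPS}_Y^{\alpha}$ is already a pure Hodge module on $Y$, so \Cref{thm-main0} shows that the complex $p_*\mathbb{D}\mathbb{Q}_{\mathcal{Y}}$ is pure on $Y$ before any properness hypothesis is used. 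Properness of $Y$ is invoked only once, at the end, to conclude that its hypercohomology $\mathrm{H}^{\mathrm{BM}}_*(\mathcal{Y})$ is pure. Your detour through the properness of each $Y_{\alpha}$ (which is correct, via finiteness of $g_{\alpha}$) becomes unnecessary.
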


As another application of our main results, we obtain Kirwan surjectivity statements for the critical cohomology of symmetric $(-1)$-shifted symplectic stacks and $0$-shifted symplectic stacks: see \Cref{ssec-Kirwan} for details.

\subsection{Notations and conventions}
\begin{enumerate}
 \item Throughout the paper, the base-field of geometric objects is the complex number field.
 \item A derived algebraic stack is a derived Artin stack over the complex number field which is quasi-separated, has separated inertia, and affine stabilizers.
 \item Reductive groups are not assumed to be connected. The neutral component of a reductive algebraic group $G$ is denoted by $G^{\circ}$. The center of $G$ is denoted by $\mathrm{Z}(G)$. If $T$ is a torus, $\Lambda_T$ denotes its cocharacter lattice.
 \item Cohomology and intersection cohomology, as well as constructible sheaves, are considered with rational coefficients.
 \item The intersection complex of an algebraic stack $\CY$ is denoted by $\ICS_{\CY}$.
 \item The Tate mixed Hodge structure is denoted by $\SL\coloneqq \rmH^*_{\rmc}(\BA^1,\BQ)$.
%  \item If $T$ is an algebraic torus ($T\cong\BG_{\rmm}^n$ for some $n\geq 0$), we let $\rmH^*(\rmB T)\coloneqq \rmH^*(\rmB T)\otimes\SL^{\dim T/2}$.
  \item The intersection cohomology of $\CY$ is denoted by $\IH^*(\CY)$. It is given the perverse shift, which means that if $\CY$ is an algebraic variety, $\IH^i(\CY)$ may be non-zero only for $-\dim \CY\leq i\leq \dim \CY$. If $T'$ is a torus acting on $\CY$, we let $\IH^*_{T'}(\CY)$ be the $T'$-equivariant intersection cohomology of $\CY$.
 \item For a smooth algebraic stack $\mathcal{U}$, we define $\mathrm{H}^*(\mathcal{U})_{\mathrm{vir}} \coloneqq \mathrm{H}^*(\mathcal{U}) \otimes \mathscr{L}^{- \dim \CU / 2}$. It coincides with the intersection cohomology, but we use this notation to stress that the stack we consider is smooth and indicate that it is isomorphic to the ordinary cohomology up to shifts and twists. If $\CU$ has a $T'$-action for some torus, we define $\mathrm{H}_{T'}^*(\mathcal{U})_{\mathrm{vir}} \coloneqq \mathrm{H}_{T'}^*(\mathcal{U}) \otimes \mathscr{L}^{- \dim \CU / 2}$.
 \item The perverse cohomology functors are denoted by $\pH^i$ for $i\in\BZ$. If $\CF$ is a complex of constructible sheaves, we let $\pH(\CF)\coloneqq \bigoplus_{i\in\BZ}\pH^i(\CF)[-i]$ be its total cohomology. If $f\colon\CF\rightarrow\CG$ is a morphism between constructible complexes, we let $\pH(f)\colon\pH(\CF)\rightarrow\pH(\CG)$ be the corresponding morphism between cohomologically graded perverse sheaves.
 \item For an algebraic stack $\mathcal{X}$, $\mathsf{MMHM}(\mathcal{X})$ denotes the category of monodromic mixed Hodge modules on $\mathcal{X}$, see \cite[\S 2]{davison2020cohomological}, \cite[\S 2.9]{brav2015symmetries}, \cite{tubach2024mixed} or \cite[\S 5.2]{bu2025cohomology} for details.
 \item The cotangent complex of a derived stack $\CX$ is denoted by $\mathbb{L}_{\CX}$.
\end{enumerate}

\begin{ACK}
    We are grateful to Sebastian Schlegel Mejia for helpful discussions and to Andrei Negu\c t for asking about the torus equivariant version of integrality. The first author is thankful to Shrawan Kumar for pushing him to think about the results in the paper.
    The second author is grateful to Pavel Safronov for discussions on Langlands duality for $3$-manifolds and Yalong Cao for a discussion on Kirwan surjectivity.
	The second author was supported by JSPS KAKENHI Grant Number 25K17229.
\end{ACK}

\section{Cohomological PBW map}

In this section, we define the natural map from left to right in \eqref{eq-intro-main} of \Cref{theorem:main_theorem}.

\subsection{Component lattices and faces}\label{ssec-component-lattice}

Here, we briefly recall the theory of component lattices from \cite{bu2025intrinsic}.
We refer the readers to \cite[\S 2]{bu2025cohomology} for a summary.

Let $\mathcal{X}$ be locally finitely presented derived algebraic stack. In \cite[\S 3.2.2]{bu2025intrinsic}, the authors defined the \textit{component lattice} of $\mathcal{X}$, a presheaf on the category of finite rank free $\mathbb{Z}$-modules. It is defined as
 \[
  \mathrm{CL}(\mathcal{X}) \colon \mathbb{Z}^n \mapsto \pi_0(\mathrm{Map}(\mathrm{B} \mathbb{G}_{\mathrm{m}}^n, \mathcal{X})).   
 \]
 For convenience, we will mostly work with its rationalization $\mathrm{CL}_{\mathbb{Q}}(\mathcal{X}) \coloneqq \mathrm{CL}(\mathcal{X}) \otimes \mathbb{Q}$.
 The category of faces $\mathsf{Face}(\mathcal{X})$ for $\mathcal{X}$ is defined as follows:
 \begin{itemize}
    \item An object is a pair $(F, \alpha)$ of a finite-dimensional $\mathbb{Q}$-vector space $F$ and a morphism $\alpha \colon F \to \mathrm{CL}_{\mathbb{Q}}(\mathcal{X})$.
    \item A morphism $(F', \alpha') \to (F, \alpha)$ is a morphism $f \colon F' \to F$ of $\mathbb{Q}$-vector spaces satisfying $\alpha' = \alpha \circ f$.
 \end{itemize}
For a face $(F, \alpha) \in \Face(\mathcal{X})$, we define the stack $\mathcal{X}_{\alpha}$ as follows: 
First, we take an integral lift $(\mathbb{Z}^n \cong F_{\mathbb{Z}}, \alpha_{\mathbb{Z}}) \in \mathrm{CL}(\mathcal{X})$ of $(F, \alpha)$.
Then we define $\mathcal{X}_{\alpha}$ as the connected component of $\mathrm{Map}(\mathrm{B} \mathbb{G}_{\mathrm{m}}^n, \mathcal{X})$ corresponding to $\alpha_{\mathbb{Z}}$. We can show that $\mathcal{X}_{\alpha}$ does not depend on the choice of the integral lift and is contravariantly functorial in $\alpha$.
In particular, the automorphism group $\Aut(\alpha)$ acts on $\mathcal{X}_{\alpha}$ with contravariance. If $\mathcal{X}$ admits a good moduli space $p \colon \mathcal{X} \to X$, one can show that $\mathcal{X}_{\alpha}$ also admits a good moduli space $p_{\alpha}  \colon \mathcal{X}_{\alpha} \to X_{\alpha}$: see \cite[\S 2.2.6]{bu2025cohomology}.

 A face $(F, \alpha)$ is said to be \textit{non-degenerate} if $\alpha$ does not factor through a lower rank vector space.
 A non-degenerate face $(F, \alpha)$ is said to be \textit{special} if for any non-isomorphic morphism $f \colon \alpha \to \beta$ to a non-degenerate face $\beta$, the map $\mathcal{X}_{\beta} \to \mathcal{X}_{\alpha}$ is non-isomorphic.
 We define the full subcategories 
 \[
    \mathsf{Face}^{\nd}(\mathcal{X}), \mathsf{Face}^{\sp}(\mathcal{X}) \subset \mathsf{Face}(\mathcal{X})
\]
of non-degenerate and special faces.

 For a non-degenerate face $(F, \alpha) \in \mathsf{Face}^{\nd}(\mathcal{X})$, the weights of the cotangent complex of $\CX$ define a hyperplane arrangement on $F$, which we refer to as the cotangent arrangement.
 For a chamber $\sigma \subset F$ with respect to the cotangent arrangement, one can define the stack $\mathcal{X}_{\sigma}^+$ and the correspondence
 \[
    \mathcal{X}_{\alpha}  \xleftarrow[]{\gr_{\sigma}} \mathcal{X}_{\sigma}^+ \xrightarrow[]{\ev_{\sigma}} \mathcal{X}.
 \]
 The stack $\mathcal{X}_{\sigma}^+ $ is defined as a connected component of $\mathrm{Map}(R_{\sigma_{\mathbb{Z}}} / \mathbb{G}_{\mathrm{m}}^{\dim F}, \mathcal{X})$ where $R_{\sigma_{\mathbb{Z}}}$ denotes the affine toric variety associated with an integral lift $\sigma_{\mathbb{Z}}$ of $\sigma$; see \cite[\S 5.1.6]{bu2025intrinsic} for details.

 \begin{example}
    We now explain that the above notions generalize fundamental concepts appearing in Lie theory. We take the quotient stack $\mathcal{X} = V / G$ of a representation $V$ of a connected reductive group $G$.
    Let $\Lambda_T$ denote the cocharacter lattice of the maximal torus $T \subset G$ and $W$ be the Weyl group of $G$. Then we have an equivalence of presheaves on the category of $\mathbb{Q}$-vector spaces
    \[
     \mathrm{CL}_{\mathbb{Q}}(\mathcal{X}) \cong (\Lambda_T \otimes \mathbb{Q}) / W.    
    \]
    A non-degenerate face corresponds to a linear subspace in $\Lambda_T\otimes\BQ$, up to the Weyl group action,
    and special faces are those faces written as intersections of hyperplanes determined by roots of $G$ and $T$-weights of $V$.
    For a non-degenerate face $(F, \alpha)$ and a chamber $\sigma \subset F$ with respect to the cotangent arrangement, pick a cocharacter $\lambda \colon \mathbb{G}_{\mathrm{m}} \to T$ contained in the image of the interior of $\sigma$.
    Then we have
    \[
    \mathcal{X}_{\alpha} \cong V^{\lambda} / L_{\lambda}, \quad \mathcal{X}_{\sigma}^+ \cong V^{\lambda, +} / P_{\lambda}
    \]
    where $V^{\lambda}$ denotes the fixed locus, $V^{\lambda, +}$ the attractor locus, $L_{\lambda}$ the Levi subgroup (centralizer of $\lambda$) and $P_{\lambda}$ the parabolic subgroup, respectively.
 \end{example}

 Now, assume that $\mathcal{X}$ is smooth and almost symmetric, or $0$-shifted symplectic.
 For a non-degenerate face $(F, \alpha) \in \Face^{\nd}(\mathcal{X})$, we define the \textit{cotangent sign representation} of $\Aut(\alpha)$ as follows:
 Take a chamber $\sigma \subset F$ with respect to the cotangent arrangement. For $g \in \Aut(\alpha)$, we define 
 \[
  \sgn_{\alpha}(g) \coloneqq (-1)^{d(\sigma, g(\sigma))}   
 \]
 where $d(\sigma, g(\sigma))$ denotes the signed count of the number of walls between $\sigma$ and $g(\sigma)$; see \cite[\S 4.4.5]{bu2025cohomology} for the details.

 \subsection{BPS cohomology}

 Let $\mathcal{U}$ be a smooth algebraic stack satisfying the assumptions in \Cref{para:assumptions-stacks}. Here, we will introduce the BPS sheaf following \cite[\S 7.1]{bu2025cohomology}.
 Let $(F, \alpha) \in \Face^{\nd}(\mathcal{U})$ be a non-degenerate face and let $p_{\alpha} \colon \mathcal{U}_{\alpha} \to U_{\alpha}$ be the good moduli space morphism.
 We define the BPS sheaf as
 \[
  \mathcal{BPS}^{\alpha}_{U} \coloneqq {}^{\mathrm{p}}\! \mathcal{H}^0(p_{\alpha, *} \mathcal{IC}_{\CU_{\alpha}} \otimes \mathscr{L}^{- \dim F /2}) \in \mathsf{MMHM}(U_{\alpha}).
 \]
 We have the following statement:

 \begin{proposition}
    \label{proposition:smallness_gms}
    We adopt the notations from the last paragraph.
    \begin{enumerate}
     \item The monodromic mixed Hodge module complex $p_{\alpha, *} \mathcal{IC}_{\mathcal{U}_{\alpha}}$ is pure.
     \item For $i< \dim F$, we have $\pH^i(p_{\alpha,*}\ICS_{\mathcal{U}_{\alpha}})=0$. In particular, $\mathcal{BPS}^{\alpha}_{U} = 0$ for non-special $\alpha$.
     \item If the neutral component of the stabilizer of a general closed point of $\mathcal{U}_{\alpha}$ is isomorphic to $\BG_{\rmm}^{\dim F}$, then $\BPS_{U}^{\alpha}=\ICS_{U_{\alpha}}$ and $\BPS_{U}^{\alpha}=0$ otherwise.
    \end{enumerate}
    \end{proposition}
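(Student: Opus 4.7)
The plan is to exploit the central $\rmB\BG_{\rmm}^{\dim F}$-gerbe structure that every non-degenerate face $(F, \alpha)$ induces on $\CU_{\alpha}$, reducing the proposition to a purity and smallness statement for the good moduli morphism of the rigidified stack $\CU_{\alpha}^{\mathrm{rig}}$. The non-degeneracy of $\alpha$ ensures that the tautological cocharacter $\BG_{\rmm}^{\dim F} \to \Aut_{\CU_{\alpha}}(\id)$ is injective and factors through the center of each closed-point stabilizer. Rigidifying by this torus yields a $\rmB\BG_{\rmm}^{\dim F}$-gerbe $g_{\alpha} \colon \CU_{\alpha} \to \CU_{\alpha}^{\mathrm{rig}}$, through which the good moduli morphism factors as $p_{\alpha} = p^{\mathrm{rig}}_{\alpha} \circ g_{\alpha}$.

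The projection formula (together with the fact that $\BQ$-coefficient cohomology of a $\BG_{\rmm}^{\dim F}$-gerbe is insensitive to its banding) gives
\[
 p_{\alpha,*}\ICS_{\CU_{\alpha}} \cong p^{\mathrm{rig}}_{\alpha,*}\ICS_{\CU_{\alpha}^{\mathrm{rig}}} \otimes \rmH^*(\rmB\BG_{\rmm}^{\dim F})[-\dim F],
\]
and the tensor factor on the right is pure and concentrated in cohomological degrees $\geq \dim F$. Thus parts (1) and (2) reduce to the claims that $p^{\mathrm{rig}}_{\alpha,*}\ICS_{\CU_{\alpha}^{\mathrm{rig}}}$ is pure and has no negative perverse cohomology. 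I would establish both by reducing via the Alper--Hall--Rydh étale local structure theorem for good moduli spaces to the quotient-stack case $V/H$, then invoking the purity arguments from \cite[\S 7.1]{bu2025cohomology}; the perverse non-negativity comes from the fact that the stalks of $p^{\mathrm{rig}}_{\alpha,*}\ICS_{\CU_{\alpha}^{\mathrm{rig}}}$ at closed points are cohomologies of classifying stacks of reductive groups, which are non-negatively graded, plus purity.

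For part (3), the above decomposition identifies $\BPS^{\alpha}_{U} \cong \pH^0(p^{\mathrm{rig}}_{\alpha,*}\ICS_{\CU_{\alpha}^{\mathrm{rig}}})$ up to Tate twist, since only the $\rmH^0$ summand of $\rmH^*(\rmB\BG_{\rmm}^{\dim F})[-\dim F] \otimes \SL^{-\dim F/2}$ sits in cohomological degree $0$. If the neutral component of the generic stabilizer of $\CU_{\alpha}$ equals $\BG_{\rmm}^{\dim F}$, then $\CU_{\alpha}^{\mathrm{rig}}$ has generically finite stabilizer, so $p^{\mathrm{rig}}_{\alpha}$ is generically a finite étale gerbe and is in particular ``small'' in the sense appropriate for good moduli morphisms of smooth stacks. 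The smallness criterion gives $p^{\mathrm{rig}}_{\alpha,*}\ICS_{\CU_{\alpha}^{\mathrm{rig}}} = \ICS_{X_{\alpha}}$, hence $\BPS^{\alpha}_{U} = \ICS_{X_{\alpha}}$. Otherwise, the generic fiber of $p^{\mathrm{rig}}_{\alpha}$ is $\rmB H$ for a positive-dimensional reductive $H$, and a direct stalk computation shows that the lowest cohomological degree of $p^{\mathrm{rig}}_{\alpha,*}\ICS_{\CU_{\alpha}^{\mathrm{rig}}}$ on the generic stratum strictly exceeds that of $\ICS_{X_{\alpha}}$, so no full-support $\ICS_{X_{\alpha}}$ summand can appear in $\pH^0$.

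The main obstacle, and the only genuinely nontrivial part of the argument, is ruling out possible summands of $\pH^0$ supported on proper closed strata of $X_{\alpha}$ in this last case: generic vanishing together with purity does not by itself imply global vanishing, since pure perverse sheaves can a priori have summands supported on closed subvarieties. The crucial extra input here is the Efimov-type vanishing bound on algebraic BPS cohomology proved by the first author in \cite{hennecart2024cohomological}. Applying it after passing to étale slices via the Alper--Hall--Rydh local structure theorem, and inducting on the dimension of faces in the component lattice $\mathrm{CL}_{\BQ}(\CU)$, forces all such closed-stratum contributions to vanish and completes the proof.
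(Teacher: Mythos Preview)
Your approach to parts (1), (2), and the ``if'' direction of (3) via the $\rmB\BG_{\rmm}^{\dim F}$-gerbe rigidification and \'etale slices is sound and is essentially the strategy underlying the cited results, so those parts are fine.

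The gap is in the ``otherwise'' direction of (3). The Efimov-type bound from \cite{hennecart2024cohomological} constrains the \emph{algebraic} BPS spaces $P_{\lambda}$, defined combinatorially as graded pieces of $\rmH^*_T(\pt)$ via the cokernel of the absolute CoHI map; part (3), by contrast, concerns the \emph{sheaf-theoretic} object $\BPS_{U}^{\alpha} = \pH^{0}(p_{\alpha,*}\ICS_{\CU_{\alpha}} \otimes \SL^{-\dim F/2})$. You give no mechanism for transferring a degree bound on the former into the vanishing of the latter, and the only such bridge available in this paper is \Cref{theorem:local_case} itself, whose proof in \S\ref{sec-proof-main} explicitly consumes \Cref{proposition:smallness_gms}(3) at the step where one argues that ``$\BPS_{\tilde{U}}^{\alpha}$ is either zero or the intersection complex'' in order to rule out skyscraper summands in the source of $\pH(\Psi''_{\tilde{\CU}})$. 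So the route you propose is circular within the paper's logical structure. The paper instead invokes \cite[Corollary~7.2.4]{bu2025cohomology}, which was established prior to and independently of the present cohomological integrality theorem, via the support constraints coming from the decomposition theorem for good moduli morphisms \cite{kinjo2024decomposition}, with no appeal to Efimov's lemma.
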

    \begin{proof}
    This is a combination of \cite[Theorem 1.1]{kinjo2024decomposition} and \cite[Proposition~7.2.1 and Corollary~7.2.4]{bu2025cohomology}. 
    % Although \emph{loc.cit.} a priori only deals with almost symmetric stacks, the proof works mutatis mutandis for weakly symmetric stacks as in \Cref{definition:notions_symmetric_stacks}.
\end{proof}

\Cref{proposition:smallness_gms} shows that the BPS sheaf is the possibly non-vanishing lowest perverse cohomology. In particular, we have a natural adjunction map
\begin{equation}\label{eq-BPS-map}
    \mathcal{BPS}^{\alpha}_{U} \otimes \mathscr{L}^{\dim F / 2} \to p_{\alpha, *} \mathcal{IC}_{\CU_{\alpha}}.
\end{equation}

\subsection{Global equivariant parameter}\label{ssec-global-equiv-parameter}

Here, we briefly recall the definition of the global equivariant parameter following \cite[\S9.1.2]{bu2025cohomology}.
Let $(F, \alpha) \in \Face^{\nd}(\mathcal{X})$ be a non-degenerate $n$-dimensional face of an algebraic stack.
A \textit{global equivariant parameter} for $\mathcal{X}_{\alpha}$ is a choice of line bundles $\mathcal{L}_1, \ldots, \mathcal{L}_n$ on $\mathcal{X}_{\alpha}$, such that,
the first Chern classes restricted to $\mathrm{B} \mathbb{G}_{\mathrm{m}}^n$ form a basis of $\mathrm{H}^2(\mathrm{B} \mathbb{G}_{\mathrm{m}}^n)$.
The global equivariant parameter exists for a broad class of algebraic stacks: for example, it is shown in \cite[Lemma 9.1.3]{bu2025cohomology} that a quotient stack by an affine algebraic group always admits a global equivariant parameter.

Now let $\mathcal{U}$ and $(F, \alpha)$ as in the last paragraph.
Then, using the global equivariant parameter, one may extend the map \eqref{eq-BPS-map} to
\begin{equation}\label{eq-BPS-gloeq-map}
    \mathcal{BPS}^{\alpha}_{U} \otimes \mathrm{H}^*(\mathrm{B} \mathbb{G}_{\mathrm{m}}^{\dim F})_{\vir} \to p_{\alpha, *} \mathcal{IC}_{\CU_{\alpha}}
\end{equation}
as in \cite[\S9.1.9]{bu2025cohomology}.

\subsection{Cohomological Hall induction}\label{ssec-cohi}

Let $\mathcal{U}$ be a smooth algebraic stack satisfying the assumptions in \Cref{para:assumptions-stacks}, and take a non-degenerate face $(F, \alpha) \in \Face^{\nd}(\mathcal{X})$ and a chamber $\sigma \subset F$ in the cotangent arrangement.
Consider the following commutative diagram:
\[\begin{tikzcd}
	& {\mathcal{U}_{\sigma}^+} \\
	{\mathcal{U}_{\alpha}} && {\mathcal{U}} \\
	{U_{\alpha}} && {U.}
	\arrow["{\gr_{\sigma}}"', from=1-2, to=2-1]
	\arrow["{\ev_{\sigma}}", from=1-2, to=2-3]
	\arrow["{p_{\alpha}}"', from=2-1, to=3-1]
	\arrow["p", from=2-3, to=3-3]
	\arrow["g_{\alpha}", from=3-1, to=3-3]
\end{tikzcd}\]
Since $\mathcal{U}$ is smooth, $\gr_{\sigma}$ is also smooth, and the existence of the good moduli space implies that $\ev_{\sigma}$ is proper.
It is also shown in \cite[\S 8.1.3]{bu2025intrinsic} that $g_{\alpha}$ is a finite morphism.

Using the smoothness of $\gr_{\sigma}$ and the properness of $\ev_{\sigma}$, we have the following natural map
\begin{equation}\label{eq-rel-CoHI}
 *^{\mathcal{H}\mathrm{all}}_{\sigma} \colon g_{\alpha, *} p_{\alpha, *} \mathcal{IC}_{\mathcal{U}_{\alpha}} \to  p_* \mathcal{IC}_{\mathcal{U}}
\end{equation}
which we refer to as the \textit{relative cohomological Hall induction} (CoHI) map. By passing to the global sections, we obtain the \textit{absolute cohomological Hall induction} map
\[
    *^{\mathrm{Hall}}_{\sigma} \colon \mathrm{H}^*(\mathcal{U}_{\alpha})_{\mathrm{vir}} \to \mathrm{H}^*(\mathcal{U})_{\mathrm{vir}}.
\]
Since $\CU$ is almost symmetric, the CoHI map preserves the cohomological gradings. Assume further that we are given a non-degenerate face $(F', \alpha') \in \Face^{\mathrm{nd}}(\mathcal{U}_{\alpha})$ and a cone $\sigma' \subset F'$.
In this case, we can define the chamber $\sigma \uparrow \sigma' \subset F'$ with respect to the cotangent arrangement for $\mathcal{U}$, together with the following commutative diagram
\[\begin{tikzcd}
	&& {\mathcal{U}_{\sigma \uparrow \sigma'}} \\
	& {(\mathcal{U}_{\alpha})_{\sigma'}} && {\mathcal{U}_{\sigma}} \\
	{(\mathcal{U}_{\alpha})_{\alpha'}} && {\mathcal{U}_{\alpha}} && {\mathcal{U}.}
	\arrow[from=1-3, to=2-2]
	\arrow[from=1-3, to=2-4]
	\arrow["{\gr_{\sigma \uparrow \sigma'}}"', curve={height=30pt}, from=1-3, to=3-1]
	\arrow["{\ev_{\sigma \uparrow \sigma'}}", curve={height=-30pt}, from=1-3, to=3-5]
	\arrow["{\gr_{\sigma'}}", from=2-2, to=3-1]
	\arrow["{\ev_{\sigma'}}"', from=2-2, to=3-3]
	\arrow["{\gr_{\sigma}}", from=2-4, to=3-3]
	\arrow["{\ev_{\sigma}}"', from=2-4, to=3-5]
\end{tikzcd}\]
such that the middle square is Cartesian; see \cite[Theorem 6.3.6]{bu2025intrinsic}.
In particular, we have the associativity relation of the cohomological Hall induction
\begin{equation}\label{eq-CoHI-assoc}
 *_{\sigma}^{\mathcal{H}\mathrm{all}} \circ *_{\sigma'}^{\mathcal{H}\mathrm{all}} =     *_{\sigma \uparrow \sigma'}^{\mathcal{H}\mathrm{all}}.
\end{equation}

\subsection{Cohomological PBW map}

Now let $\mathcal{U}$, $(F, \alpha) \in \mathsf{Face}^{\nd}(\mathcal{X})$ and $\sigma \subset F$ as in the previous paragraph.
We also fix a global equivariant parameter for $\mathcal{U}_{\alpha}$. By combining \eqref{eq-BPS-gloeq-map} and \eqref{eq-rel-CoHI}, we obtain a map
\[
   \Gamma_{\sigma} \colon g_{\alpha, *} \mathcal{BPS}^{\alpha}_{U} \otimes \mathrm{H}^*(\mathrm{B} \mathbb{G}_{\mathrm{m}}^{\dim F})_{\mathrm{vir}} \to p_* \mathcal{IC}_{\mathcal{U}}.
\]
We now choose a chamber $\sigma_{\alpha} \subset F$ with respect to the cotangent arrangement for each $(F, \alpha)$ and define the \textit{relative cohomological PBW map} as
\begin{equation}\label{eq-cohint-map}
 \Psi = \Psi_{\mathcal{U}} \colon \bigoplus_{(F, \alpha) \in \Face^{\nd}(\mathcal{U})} (g_{\alpha, *} \mathcal{BPS}^{\alpha}_{U} \otimes \mathrm{H}^*(\mathrm{B} \mathbb{G}_{\mathrm{m}}^{\dim F})_{\mathrm{vir}} \otimes \sgn_{\alpha})^{\mathrm{Aut}(\alpha)} \xrightarrow{ \sum_{\alpha} \Gamma_{\sigma_{\alpha}}} p_* \mathcal{IC}_{\mathcal{U}}.
\end{equation}
The sum runs over the set of isomorphism classes of non-degenerate faces of $\CU$. For a smooth algebraic stack $\mathcal{U}$ satisfying the assumptions in \Cref{para:assumptions-stacks}, we say that $\mathcal{U}$ satisfies the \textit{BPS decomposition theorem}
if $ \Psi $ is an isomorphism for any choice of the global equivariant parameter and of chambers $\sigma_{\alpha} \subset F$.
By \Cref{proposition:smallness_gms}, only special faces contribute to the cohomological PBW map.

Our main theorem \ref{theorem:main_theorem} can be rephrased as follows:

\begin{theorem}\label{thm-cohint-satisfy}
    Any smooth algebraic stack $\mathcal{U}$ satisfying the assumptions in \Cref{para:assumptions-stacks} satisfies the BPS decomposition theorem.
\end{theorem}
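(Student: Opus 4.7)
The plan is a two-step reduction. First, I would reduce the global statement of \Cref{thm-cohint-satisfy} to the local case \Cref{theorem:local_case} for a symmetric representation $V/G$ of a connected reductive group, using étale slices on the good moduli space together with an induction. Second, I would prove the local case by combining the cohomological upper bound for the algebraic BPS cohomology coming from \cite{hennecart2024cohomological} (based on Efimov's lemma) with the vanishing cycle comparison of \cite{bu2025cohomology} which allows one to transport the known orthogonal result to the symmetric setting.

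For the first step, I would invoke the étale slice theorem for good moduli space morphisms of Alper--Hall--Rydh: each closed point $x \in U$ admits an étale neighborhood $U' \to U$ such that the base change $\CU \times_U U'$ is, locally around the fibers over $x$, of the form $V_x/G_x$ where $(V_x, G_x)$ is the slice datum at $x$. Almost symmetry of $\CU$ passes to $V_x$. Crucially, the component lattice, the BPS sheaf $\BPS^{\alpha}_U$, the global equivariant parameter, and the relative cohomological Hall induction \eqref{eq-rel-CoHI} are all compatible with étale base change on the good moduli space, so that the cohomological integrality map $\Psi_{\CU}$ defined in \eqref{eq-cohint-map} base-changes to $\Psi_{V_x/G_x}$. Étale descent for monodromic mixed Hodge modules then reduces the assertion that $\Psi_{\CU}$ is an isomorphism to the corresponding assertion for each slice. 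An induction, for instance on $\dim U$, simultaneously handles the strata appearing in the face stratification of $\CU$, so that at each step only the top face needs to be verified from scratch --- this is where \Cref{theorem:local_case} is used.

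For the local case, I would combine two inputs. On one hand, \cite{hennecart2024cohomological} provides, via an argument relying on Efimov's lemma, an upper bound for the dimensions of the algebraic BPS spaces $P_{\lambda}$; together with \Cref{proposition:smallness_gms}(3), this bound matches exactly the dimensions of the intersection cohomology summands appearing on the right-hand side of \Cref{theorem:local_case}. On the other hand, the vanishing cycle trick of \cite{bu2025cohomology}, combined with the dimensional reduction isomorphism of \cite{kinjo2022dimensional}, identifies $\rmH^*(V/G)_{\vir}$ with the critical cohomology of the cotangent stack $T^*[V/G] \cong (V \oplus V^*)/G$ equipped with its natural moment map potential. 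Since $T^*[V/G]$ is orthogonal, the cohomological integrality theorem holds there by the orthogonal case of \cite{bu2025cohomology}, and the dimensional reduction transports this isomorphism back to $V/G$ in a way compatible with the CoHI map \eqref{eq-rel-CoHI} and with the identification of the BPS sheaves via \Cref{proposition:smallness_gms}(3). This produces a surjection onto the target of $\Psi$, which together with the upper bound forces $\Psi$ to be an isomorphism.

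The main technical obstacle will be to verify that all the ingredients of $\Psi$ --- the cotangent chambers $\sigma_{\alpha}$, the cotangent sign representation $\sgn_{\alpha}$, the choice of global equivariant parameter, and the associativity \eqref{eq-CoHI-assoc} of the CoHI map --- behave coherently under both the étale slice reduction and the dimensional reduction/vanishing cycle comparison with the orthogonal double. Handling the $\Aut(\alpha)$-equivariance during descent is delicate, since $G_x$ may be disconnected and $\Aut(\alpha)$ can permute faces; one must also check that the cotangent sign representation, which a priori differs between the symmetric stack and its orthogonal double, matches correctly under the dimensional reduction. Verifying this sign matching is the decisive combinatorial step that makes the vanishing cycle comparison respect the full decomposition \eqref{eq-cohint-map}.
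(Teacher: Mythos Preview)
Your first reduction via étale slices matches \Cref{prop-slice-reduction} and is fine.

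The second step has a genuine gap. The dimensional reduction of \cite{kinjo2022dimensional} does not identify $\rmH^*(V/G)_{\vir}$ with the critical cohomology of $(V\oplus V^*)/G$ with moment map potential: it relates the Borel--Moore homology of a $0$-shifted symplectic stack $\CY$ to the critical cohomology of $T^*[-1]\CY$, which is a different statement. More decisively, even granting that $(V\oplus V^*)/G$ is orthogonal so that \cite{bu2025cohomology} gives integrality for it, you have no mechanism to transfer this back to $V/G$. The only transfer available (\Cref{prop-crit-reduction}, i.e.\ passing to a critical locus) would require writing $V/G=\mathrm{Crit}(f)$ for a $G$-invariant function $f$ on $(V\oplus V^*)/G$, which amounts to a $G$-invariant non-degenerate quadratic form on $V$; such a form exists precisely when $V$ is already orthogonal. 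Your reduction to the orthogonal case therefore collapses exactly in the symmetric-but-not-orthogonal situations the theorem is meant to cover.

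The paper does \emph{not} reduce to the orthogonal result of \cite{bu2025cohomology}. It passes, via \Cref{prop-crit-reduction} and the Killing form on $\Fg_{\mathrm{ad}}$ (which is always $G$-invariant, independently of $V$), from $V/G$ to $\tilde{\CU}=(V\times\Fg_{\mathrm{ad}})/G$, and proves integrality for $\tilde{\CU}$ directly by a double induction on $\dim G$ and $\dim V$. The inductive step combines the cohomological bound \Cref{thm-cohomological-bound} (this is where Efimov's lemma enters, forcing $P_{\tilde{\CU}}$ into non-positive degrees) with an analysis of perverse filtrations and asymptotic graded dimensions (\Cref{lem-injective}, \Cref{lem-surjective}, and the estimates \eqref{eq-easy-estimate}--\eqref{eq-important-estimate}) to establish both surjectivity and injectivity of $\Psi_{\tilde{\CU}}$. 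The vanishing-cycle trick is used only inside the induction, to propagate integrality from $\CU_{\bar\alpha}$ (known by the hypothesis on $\dim V$) to $\tilde{\CU}_\alpha$; it is never used to invoke the orthogonal theorem as a black box.
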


We have the following special case, which will be the key initial step in the induction.

\begin{proposition}\label{prop-cohint-BG}
   The BPS decomposition theorem holds for $\mathrm{B} G$ for a reductive group $G$.
\end{proposition}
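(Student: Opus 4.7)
The plan is to reduce the statement to Borel's classical computation of $\rmH^{*}(\mathrm{B}G)$. Since the good moduli space of $\mathrm{B}G$ is a point, the statement is a statement about (monodromic) mixed Hodge structures on $\mathrm{pt}$. We may assume $G$ is connected, since the disconnected case follows by taking $\pi_{0}(G)$-invariants on both sides of the integrality map, using that $\rmH^{*}(\mathrm{B}G)=\rmH^{*}(\mathrm{B}G^{\circ})^{\pi_{0}(G)}$ and that the cohomological Hall induction is functorial.

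First, I would enumerate the non-degenerate faces of $\mathrm{B}G$ and identify those that contribute. By the example in \S\ref{ssec-component-lattice}, isomorphism classes of non-degenerate faces correspond to linear subspaces $F\subset\Lambda_{T}\otimes\BQ$ modulo the Weyl group $W$, with $\CU_{\alpha}=\mathrm{B}L_{\alpha}$ for $L_{\alpha}$ the Levi centralizing $\alpha(F)$. By \Cref{proposition:smallness_gms}(3), the BPS sheaf $\BPS^{\alpha}_{\mathrm{pt}}$ is nonzero exactly when $L_{\alpha}^{\circ}\cong\BG_{\rmm}^{\dim F}$; since $L_{\alpha}\supseteq T$, this forces $L_{\alpha}=T$ and $\dim F=\mathrm{rank}(G)$. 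Hence a single face $\alpha_{0}$ contributes, with $\CU_{\alpha_{0}}=\mathrm{B}T$, $\BPS^{\alpha_{0}}_{\mathrm{pt}}=\BQ$, and $\Aut(\alpha_{0})=W$. The cotangent arrangement on $F=\Lambda_{T}\otimes\BQ$ is the root arrangement, whose chambers are Weyl chambers, so $d(\sigma,w\sigma)=\ell(w)$ and therefore $\sgn_{\alpha_{0}}$ equals the ordinary sign character of $W$.

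Next, I would identify the cohomological Hall induction map. Choosing a chamber $\sigma\subset F$ is the same as choosing a Borel subgroup $B\supset T$, and the corresponding correspondence reads
\begin{equation*}
\mathrm{B}T\xleftarrow{\gr_{\sigma}}\mathrm{B}B\xrightarrow{\ev_{\sigma}}\mathrm{B}G,
\end{equation*}
where $\gr_{\sigma}$ is a homotopy equivalence (induced by $B\twoheadrightarrow B/U=T$) and $\ev_{\sigma}$ is proper with fibre $G/B$. It follows, up to the appropriate Tate twist by $\SL^{\dim G/B}$, that $\Psi_{\mathrm{B}G}$ agrees with the classical parabolic pushforward $\ev_{\sigma,*}\gr_{\sigma}^{*}$ from $\rmH^{*}(\mathrm{B}T)$ to $\rmH^{*}(\mathrm{B}G)$, restricted to the sign-isotypic $W$-component.

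Finally, I would invoke Borel's theorem $\rmH^{*}(\mathrm{B}G)=\rmH^{*}(\mathrm{B}T)^{W}$ together with the Chevalley--Shephard--Todd identification of the sign-isotypic summand as $(\rmH^{*}(\mathrm{B}T)\otimes\sgn)^{W}=\delta\cdot\rmH^{*}(\mathrm{B}T)^{W}$, where $\delta=\prod_{\beta\in\Phi^{+}}\beta$ is the product of the positive roots; multiplication by $\delta$ is the isomorphism $(\rmH^{*}(\mathrm{B}T)\otimes\sgn)^{W}\isoto\rmH^{*}(\mathrm{B}T)^{W}$, and by the Demazure/Weyl integration formula this is precisely the pushforward $\ev_{\sigma,*}\gr_{\sigma}^{*}$. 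The degree shift $2|\Phi^{+}|=\dim G-\mathrm{rank}(G)$ matches the difference between the virtual twists on the two sides. The only real point that requires care, and which I expect to be the main bookkeeping obstacle, is matching the abstract Hall induction construction of \S\ref{ssec-cohi} with this classical pushforward and reconciling the signs and Tate twists across the chosen chamber; once that matching is verified, the proposition is equivalent to Borel's theorem.
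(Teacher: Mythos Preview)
Your argument is correct and essentially complete; the paper, by contrast, does not prove this proposition at all but simply cites it as a special case of \cite[Theorem 9.1.12]{bu2025intrinsic}. So your route is genuinely different: you give a direct, self-contained reduction to Borel's theorem and the Gysin formula for $G/B$, whereas the paper defers entirely to the external reference. Your approach has the advantage of making the mechanism transparent (the single contributing face is $\alpha_{\max}$ with $\Aut(\alpha_{\max})=W$ and $\sgn_{\alpha_{\max}}=\sgn$, and the integrality map becomes the classical identification $(\rmH^*(\rmB T)\otimes\sgn)^W\cong\rmH^*(\rmB T)^W$), at the cost of having to verify the bookkeeping you flag at the end. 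The paper's citation buys uniformity: the same external result covers the twisted equivariant-parameter and chamber dependence without a separate check.

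Two small corrections. First, you wrote that multiplication by $\delta$ is the isomorphism $(\rmH^*(\rmB T)\otimes\sgn)^W\isoto\rmH^*(\rmB T)^W$; it is division by $\delta$ (equivalently, multiplication by $\delta$ goes the other way). The Gysin pushforward along $G/B$ is indeed $f\mapsto \delta^{-1}\sum_{w}(-1)^{\ell(w)}w(f)$ up to a nonzero scalar, so on the sign-isotypic part it agrees with your map. Second, for the statement to hold for \emph{every} choice of global equivariant parameter (as the definition of ``satisfies the cohomological integrality theorem'' requires), you should note that any two choices differ by an automorphism of $\rmH^*(\rmB\BG_{\rmm}^{\dim T})$ as a graded algebra, hence induce the same image; this is implicit in your last paragraph but worth stating.
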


\begin{proof}
    This is a special case of \cite[Theorem 9.1.12]{bu2025intrinsic}.
\end{proof}

The following statement is essentially proved in \cite[\S 9.2]{bu2025cohomology}:

\begin{proposition}\label{prop-slice-reduction}
    To prove \Cref{thm-cohint-satisfy}, it is enough to show that the BPS decomposition theorem holds for quotient stacks $\mathcal{U} = V / G$,
    where $G$ is a connected reductive group and $V$ is a representation of $G$, which does not contain the trivial representation as a direct summand and 
    there is no non-trivial subtorus of $\mathrm{Z}(G)$ which acts trivially on $V$.
\end{proposition}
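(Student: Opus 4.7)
The plan is to follow the sequence of reductions carried out in \cite[\S 9.2]{bu2025cohomology}, adapted from the orthogonal to the almost symmetric setting. The cohomological integrality map $\Psi_{\CU}$ is assembled from several ingredients---the good moduli morphism, the face category $\Face^{\nd}(\CU)$, the BPS sheaves, global equivariant parameters, chamber choices, and the Hall induction correspondences---each of which transforms compatibly under étale base change on the good moduli space, under Künneth decomposition, and under finite group quotients. The three simplifications asked for in the statement (reduction to a quotient stack, to a connected group, and to a trimmed representation with faithful central action) each exploit one of these compatibilities.

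The first and most substantive reduction is the passage to étale slices. Under the running assumptions of \S \ref{para:assumptions-stacks}, the Alper--Hall--Rydh local structure theorem produces, at each closed point $x \in \CU$ with reductive stabilizer $G_x$, an étale morphism $[V/G_x] \to \CU$ where $V$ is a smooth affine $G_x$-scheme with a closed $G_x$-orbit at the origin. Since $\CU$ is smooth, one may take $V = \Tan_{\CU,x}$ as a linear representation. The induced morphism on good moduli spaces is étale, and the statement that $\Psi_{\CU}$ is an isomorphism on $U$ is étale-local. One then checks that every ingredient of $\Psi$ pulls back correctly along this étale map, so the theorem for $\CU$ is equivalent to the theorem on each local model $[V/H]$ with $H$ a (possibly disconnected) reductive group.

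The remaining two reductions are more formal. For a disconnected reductive group $H$ with neutral component $H^{\circ}$, we have $[V/H] = [V/H^{\circ}]/\pi_0(H)$, and the cohomological integrality map for $[V/H]$ is obtained from that of $[V/H^{\circ}]$ by taking $\pi_0(H)$-invariants on both sides, $\pi_0(H)$ acting compatibly on faces, BPS sheaves, cotangent sign representations, and Hall induction. To trim $V$: a trivial direct summand $\mathbb{C}^{k}$ splits $[V/G]$ as $[V'/G] \times \mathbb{A}^{k}$, and the statement for the first factor implies it for the product by Künneth. Finally, if the subtorus $T_0 \subset \mathrm{Z}(G)^{\circ}$ acting trivially on $V$ is nontrivial, then $[V/G] \cong [V/(G/T_0)] \times \mathrm{B} T_0$ as a trivial $\mathrm{B} T_0$-gerbe, and combining Künneth with \Cref{prop-cohint-BG} reduces to the case of faithful $\mathrm{Z}(G)^{\circ}$-action.

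The main obstacle is the étale-slice step: one has to verify that all of the auxiliary data entering $\Psi$ (the global equivariant parameters and the chamber choices $\sigma_{\alpha}$) descend or can be made compatible along étale covers of $U$, and that the resulting cohomological integrality map on $[V/G_x]$ is genuinely the étale pullback of the one on $\CU$. This bookkeeping is carried out in \cite[\S 9.2]{bu2025cohomology}; its two key inputs---the intrinsic (choice-independent) nature of the BPS sheaves as lowest perverse cohomology and the compatibility of Hall induction with smooth/flat base change---remain valid in the almost symmetric setting without modification.
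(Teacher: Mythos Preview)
Your first three reductions---to a quotient stack via the Alper--Hall--Rydh slice theorem, to a connected group via the $\pi_0(H)$-quotient, and to a representation without trivial summands via K\"unneth---match the paper's argument, which cites \cite[\S\S 9.2.2--9.2.4]{bu2025cohomology} for precisely these steps.

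The final reduction, however, contains a genuine error. You assert that when a subtorus $T_0 \subset \mathrm{Z}(G)^{\circ}$ acts trivially on $V$, the gerbe $[V/G] \to [V/(G/T_0)]$ is trivial, yielding a product decomposition $[V/G] \cong [V/(G/T_0)] \times \mathrm{B}T_0$. This is false in general: the gerbe is governed by the central extension $1 \to T_0 \to G \to G/T_0 \to 1$, which need not split. For instance, take $G = \GL_n$ with $n \geq 2$, $T_0 = \mathrm{Z}(G)^{\circ} \cong \BG_{\rmm}$, and $V$ any representation factoring through $\mathrm{PGL}_n$ (say the adjoint representation). The sequence $1 \to \BG_{\rmm} \to \GL_n \to \mathrm{PGL}_n \to 1$ admits no section, so $[V/\GL_n]$ is a nontrivial $\mathrm{B}\BG_{\rmm}$-gerbe over $[V/\mathrm{PGL}_n]$, and your K\"unneth argument together with the appeal to \Cref{prop-cohint-BG} breaks down.

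The paper avoids this by arguing directly at the level of the cohomological integrality map, without any product decomposition of the stacks. Writing $\bar{\CU} = V/\bar{G}$ with $\bar{G} = G/T_0$, one observes that the composite $\Face^{\nd}(\CU) \to \Face(\bar{\CU}) \xrightarrow{(-)^{\nd}} \Face^{\nd}(\bar{\CU})$ identifies the special faces, with matching good moduli spaces $U_{\alpha} \cong \bar{U}_{\bar{\alpha}}$ and BPS sheaves $\BPS_{\CU}^{\alpha} \cong \BPS_{\bar{\CU}}^{\bar{\alpha}}$. The only effect of passing from $\bar{\CU}$ to $\CU$ is that each face dimension increases by $\dim T_0$, so both source and target of the integrality map acquire an extra tensor factor of $\rmH^*(\mathrm{B}T_0)_{\vir}$, and one obtains $\Psi_{\CU} = \Psi_{\bar{\CU}} \otimes \id_{\rmH^*(\mathrm{B}T_0)_{\vir}}$ directly.
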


\begin{proof}
    The reduction to quotient stacks $\mathcal{U} = V / G$ for possibly disconnected $G$ and any representation $V$ of $G$ is a consequence of the \'etale slice theorem due to \textcite[Theorem 4.12]{alper2020luna} and \cite[\S 9.2.4]{bu2025cohomology}.
    By the argument in \cite[\S 9.2.3]{bu2025cohomology}, we may assume that $G$ is connected, and the argument in \cite[\S 9.2.2]{bu2025cohomology} shows that we may further assume that $V$ does not contain a trivial representation as a direct summand.
    Now assume that there is a subtorus $T' \subset \mathrm{Z}(G)$ such that the action of $T'$ on $V$ is trivial. We set $\bar{G} \coloneqq G /T'$ and $\bar{\mathcal{U}} \coloneqq V / T'$.
    Then the composition
    \[
     \mathsf{Face}^{\mathrm{nd}}(\mathcal{U}) \to \mathsf{Face}(\bar{\mathcal{U}}) \xrightarrow{(-)^{\mathrm{nd}}} \mathsf{Face}^{\mathrm{nd}}(\bar{\mathcal{U}}), \quad \alpha \mapsto \bar{\alpha}
    \]
    gives an identification of special faces for $\mathcal{U}$ and $\bar{\mathcal{U}}$, where the latter functor is the non-degenerate quotient functor \cite[Definition 3.3.2]{bu2025intrinsic}.
    By construction, we have $U_{\alpha} \cong U_{\bar{\alpha}}$ and the BPS sheaves are identified for special faces: $\BPS_{\CU}^{\alpha}\cong\BPS_{\overline{\CU}}^{\overline{\alpha}}$.
    In particular, we have 
    \[
     \Psi_{\mathcal{U}} =  \Psi_{\bar{\mathcal{U}}} \otimes \id_{\mathrm{H}^* (\mathrm{B} T')_{\vir}}.
    \]
    Therefore we conclude that the BPS decomposition theorem for $\bar{\mathcal{U}}$ implies the BPS decomposition theorem for $\mathcal{U}$ as desired. 
\end{proof}

For later use, we record the following statement:

\begin{proposition}\label{prop-crit-reduction}
    Let $G$ be a connected reductive group and $V$ be a representation of $G$. 
    We let $\mathfrak{g}_{\mathrm{ad}}$ denote the Lie algebra of the adjoint group $G_{\mathrm{ad}}$ regarded as a representation of $G$.
    Then the BPS decomposition theorem for $(V \times \mathfrak{g}_{\mathrm{ad}}) / G$ implies the BPS decomposition theorem for $V / G$.
\end{proposition}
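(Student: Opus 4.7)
The plan is to exploit that the projection $\pi \colon (V \times \mathfrak{g}_{\mathrm{ad}})/G \to V/G$ is a $G$-equivariant vector bundle of rank $\dim \mathfrak{g}_{\mathrm{ad}}$, together with the auxiliary $\mathbb{G}_{\mathrm{m}}$-action scaling the $\mathfrak{g}_{\mathrm{ad}}$-factor (for which $V/G$ is the fixed locus), to transfer the cohomological integrality isomorphism from the total space to the fixed locus.

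First I would check that the cotangent arrangements on the common component lattice $\mathrm{CL}_{\mathbb{Q}}(V/G) = \mathrm{CL}_{\mathbb{Q}}((V \times \mathfrak{g}_{\mathrm{ad}})/G)$ coincide: the $T$-weights of $\mathfrak{g}_{\mathrm{ad}}$ are the roots of $G$, which already appear in the cotangent arrangement of $V/G$ through the $\mathfrak{g}^*$-piece of its cotangent complex. Consequently the categories $\Face^{\sp}(V/G)$ and $\Face^{\sp}((V \times \mathfrak{g}_{\mathrm{ad}})/G)$ are canonically equivalent, and the cotangent sign representations $\sgn_{\alpha}$ agree on both sides.

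Next, for each special face $\alpha$ associated with a Levi subgroup $L_{\lambda}$, the flat stack $\mathcal{U}'_{\alpha} = (V^{\lambda} \oplus \mathfrak{l}_{\lambda,\mathrm{ad}})/L_{\lambda}$ of $(V \times \mathfrak{g}_{\mathrm{ad}})/G$ is an $L_{\lambda}$-equivariant vector bundle over the flat stack $\mathcal{U}_{\alpha} = V^{\lambda}/L_{\lambda}$ of $V/G$, where $\mathfrak{l}_{\lambda,\mathrm{ad}}$ denotes the image of $\mathfrak{l}_{\lambda}$ in $\mathfrak{g}_{\mathrm{ad}}$. Via smooth pullback of intersection complexes and the definition of the BPS sheaf as the lowest perverse cohomology (\Cref{proposition:smallness_gms}), the BPS sheaf $\BPS^{\alpha}_{U'}$ is related to $\BPS^{\alpha}_{U}$ via pullback along the induced map $\bar{\pi}_{\alpha} \colon X'_{\alpha} \to X_{\alpha}$ on good moduli spaces, up to a controlled Tate twist by $\mathscr{L}^{\dim \mathfrak{l}_{\lambda,\mathrm{ad}}/2}$.

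Finally I would invoke the $\mathbb{G}_{\mathrm{m}}$-equivariant refinement of cohomological integrality for $(V \times \mathfrak{g}_{\mathrm{ad}})/G$ (\Cref{rmk-T-action}) with respect to the scaling action on $\mathfrak{g}_{\mathrm{ad}}$, whose fixed locus is $V/G$. Specializing the equivariant isomorphism to the fixed locus — equivalently, applying the dimensional reduction theorem \cite{kinjo2022dimensional} along the lines of \cite[\S 9]{bu2025cohomology} — yields the cohomological integrality isomorphism for $V/G$. Since the relative cohomological Hall induction is functorial and $\mathbb{G}_{\mathrm{m}}$-equivariant by construction, the specialized map coincides with $\Psi_{V/G}$, so that the latter is an isomorphism. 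The principal obstacle lies in this last step: although $\pi$ is a vector bundle on the stacks, the induced map $\bar{\pi} \colon (V \times \mathfrak{g}_{\mathrm{ad}}) \cms G \to V \cms G$ on good moduli spaces is \emph{not} a vector bundle (its fibers involve closures of adjoint orbits of stabilizers), so naive proper base change along the zero section fails; the equivariant / dimensional reduction argument is what circumvents this difficulty and cleanly identifies the two cohomological integrality maps.
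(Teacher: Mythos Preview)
Your approach has a genuine gap in the final step, and the paper's argument proceeds along a different line that you do not mention.

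The paper does \emph{not} use the vector bundle structure or the scaling $\mathbb{G}_{\mathrm{m}}$-action. Instead, it observes that the Killing form on the semisimple Lie algebra $\mathfrak{g}_{\mathrm{ad}}$ induces a $G$-invariant non-degenerate quadratic function $q$ on $(V \times \mathfrak{g}_{\mathrm{ad}})/G$ whose critical locus is exactly $V/G$. One then invokes the general fact, proved in \cite[\S 9.3.5]{bu2025cohomology}, that cohomological integrality for a smooth stack $\mathcal{U}$ implies cohomological integrality for $\mathrm{Crit}(f)$ for any function $f$ on $\mathcal{U}$: the vanishing cycle functor $\varphi_q$ transports $\Psi_{(V \times \mathfrak{g}_{\mathrm{ad}})/G}$ to $\Psi_{V/G}$ up to a Tate twist, because $\varphi_q$ applied to an IC sheaf along a non-degenerate quadratic direction returns the IC sheaf of the critical locus.

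Your proposed mechanism, by contrast, is not well-defined. The dimensional reduction theorem of \cite{kinjo2022dimensional} concerns $(-1)$-shifted cotangent stacks and the DT perverse sheaf $\varphi_{\mathcal{X}}$; it does not apply to an arbitrary vector bundle projection of smooth stacks. Having a $\mathbb{G}_{\mathrm{m}}$-equivariant isomorphism of complexes on $X'=(V\times\mathfrak{g}_{\mathrm{ad}})\sslash G$ does not by itself produce an isomorphism of complexes on $X=V\sslash G$: there is no ``specialize to the fixed locus'' functor that sends $\ICS_{X'_{\alpha}}$ to $\ICS_{X_{\alpha}}$, precisely because, as you yourself note, $\bar{\pi}_{\alpha}$ is not smooth. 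Pushing $\Psi_{(V\times\mathfrak{g}_{\mathrm{ad}})/G}$ forward along $\bar{\pi}$ does recover $p_*\ICS_{V/G}$ on the target (homotopy invariance of the vector bundle $\pi$), but on the source one obtains $g_{\alpha,*}\bar{\pi}_{\alpha,*}\BPS^{\alpha}_{U'}$, and $\bar{\pi}_{\alpha,*}$ neither commutes with $\pH^0$ nor sends $\BPS^{\alpha}_{U'}$ to $\BPS^{\alpha}_{U}$ in general. The vanishing cycle functor $\varphi_q$ is exactly the missing tool: it is perverse $t$-exact, commutes with proper pushforward, and for a non-degenerate quadratic $q$ it acts as restriction to $\mathrm{Crit}(q)$ up to a Tate twist, which is what makes the comparison of the two integrality maps go through.
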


\begin{proof}
    This is essentially proved in \cite[\S 9.3.5]{bu2025cohomology}.
    Namely, for a smooth Artin stack $\mathcal{U}$ and a function $f$ on it, it is proved in \textit{loc.cit.} that the BPS decomposition theorem for $\mathcal{U}$ implies the BPS decomposition theorem for the critical locus $\mathrm{Crit}(f)$.
    Now let $q$ be the function on $(V \times \mathfrak{g}_{\mathrm{ad}}) / G$ induced by the Killing form on $\mathfrak{g}_{\mathrm{ad}}$.
    Then we have $\mathrm{Crit}(q) = V / G$, hence we obtain the desired result.
\end{proof}

\section{Proof of the BPS decomposition theorem}\label{sec-proof-main}

We now prove \Cref{thm-cohint-satisfy}. 
By \Cref{prop-slice-reduction}, we may assume $\mathcal{U}$ is a quotient stack of the form $V / G$, where $G$ is a connected reductive group, $V$ is a representation of $G$ such that $V$ does not contain the trivial representation as a direct summand and there is no non-trivial subtorus of $\mathrm{Z}(G)$ which acts trivially on $V$.
A key ingredient of the proof is the following cohomological bound, refining the bound obtained by the first author in \cite[Proposition 1.4]{hennecart2024cohomological}.
 
\begin{theorem}\label{thm-cohomological-bound}
    Let $G$ and $V$ be as above, and set $\tilde{\mathcal{U}} = (V \times \mathfrak{g}_{\mathrm{ad}}) / G$. Consider the following map 
    \begin{equation}\label{eq-Phi'}
     \Phi_{\tilde{\mathcal{U}}}' \colon \bigoplus_{\substack{(F, \alpha) \in \Face^{\nd}(\tilde{\mathcal{U}}) \\ F \neq 0}}\mathrm{H}^*(\tilde{\mathcal{U}}_{\alpha})_{\mathrm{vir}} \xrightarrow{\sum_{(F, \alpha)}*^{\mathrm{Hall}}_{\sigma}} \mathrm{H}^*(\tilde{\mathcal{U}})_{\mathrm{vir}} 
    \end{equation}
    and let $P_{\tilde{\mathcal{U}}}$ be the cokernel of $\Phi'_{\tilde{\CU}}$. Then $P_{\tilde{\mathcal{U}}}$ is concentrated in non-positive cohomological degrees.
 \end{theorem}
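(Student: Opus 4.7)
The plan is to reduce \Cref{thm-cohomological-bound} to the combinatorial cohomological bound on algebraic BPS cohomology proved by the first author in \cite[Proposition 1.4]{hennecart2024cohomological}, which rests on Efimov's lemma. The key geometric observation is that $V \times \mathfrak{g}_{\mathrm{ad}}$ is a contractible vector space, so both sides of $\Phi'_{\tilde{\mathcal{U}}}$ can be computed directly in terms of equivariant cohomology of Levi subgroups.

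Concretely, for a non-degenerate face $(F, \alpha) \in \Face^{\nd}(\tilde{\mathcal{U}})$ represented by a cocharacter $\lambda \in \Lambda_T$, the fixed stack $\tilde{\mathcal{U}}_{\alpha} \simeq (V^{\lambda} \oplus \mathfrak{g}_{\mathrm{ad}}^{\lambda})/L_{\lambda}$ deformation retracts onto $\mathrm{B} L_{\lambda}$, yielding
\[
 \mathrm{H}^*(\tilde{\mathcal{U}}_{\alpha})_{\vir} \cong \mathrm{H}^*(\mathrm{B} L_{\lambda}) \otimes \SL^{-\dim \tilde{\mathcal{U}}_{\alpha}/2}.
\]
Under this identification, the Hall induction $*_{\sigma}^{\mathrm{Hall}}$ becomes the explicit combinatorial parabolic induction from $L_{\lambda}$ to $G$ of \cite[\S 3--4]{hennecart2024cohomological} (an equivariant Euler-class operator on symmetric functions determined by the $T$-weights of $V^{\lambda,+} \oplus \mathfrak{g}_{\mathrm{ad}}^{\lambda,+}$). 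Consequently, $P_{\tilde{\mathcal{U}}}$ coincides, up to a uniform virtual twist, with the algebraic BPS space at the zero face for the symmetric $G$-representation $V \oplus \mathfrak{g}_{\mathrm{ad}}$.

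What makes the bound available is that $V \oplus \mathfrak{g}_{\mathrm{ad}}$ is genuinely symmetric: $V$ is symmetric by the standing assumptions ($G$ connected, so almost-symmetry and symmetry coincide), and $\mathfrak{g}_{\mathrm{ad}}$ is self-dual via the Killing form. Efimov's lemma, applied to the plethystic logarithm of the weight-generating series of $V \oplus \mathfrak{g}_{\mathrm{ad}}$, asserts polynomiality (rather than mere rationality) of this plethystic log in $q^{\pm 1/2}$; this is precisely the content of \cite[Proposition 1.4]{hennecart2024cohomological}, and it translates into a cohomological-degree bound on the primitive contribution $P_{\tilde{\mathcal{U}}}$.

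The main obstacle is matching the normalization of \cite[Proposition 1.4]{hennecart2024cohomological} -- phrased in terms of rational Poincar\'e series over the Weyl group quotient -- with the virtual cohomological shift $\SL^{-\dim \tilde{\mathcal{U}}/2}$ used here. The refinement relative to the original bound comes from the observation that the $\mathfrak{g}_{\mathrm{ad}}$-factor contributes exactly $\dim \mathfrak{g}_{\mathrm{ad}}$ to $\dim \tilde{\mathcal{U}}$; once this is absorbed into the virtual shift, the Efimov bound saturates precisely at degree $0$, yielding the claimed non-positivity for $P_{\tilde{\mathcal{U}}}$.
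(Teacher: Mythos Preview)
Your high-level strategy is right and matches the paper's: both sides of $\Phi'_{\tilde{\mathcal{U}}}$ are identified with $T$-equivariant cohomology of a point (with Weyl invariants), the Hall induction becomes multiplication by an explicit Euler class, and Efimov's lemma yields the degree bound. However, two points in your write-up are off.

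First, your description of Efimov's lemma is incorrect. Lemma~3.6 of \cite{efimov2012cohomological} is a commutative-algebra statement: given linear forms $l_1,\dots,l_s$ in $B=\BQ[z_1,\dots,z_n]$ and monomials $P_i=\prod_j l_j^{d_{ij}}$, the ideal $(P_1,\dots,P_r)$ has finite codimension if and only if $B^d\subset (P_1,\dots,P_r)$ for all $d\geq \sum_j\max_i d_{ij}-n+1$. There is no plethystic logarithm, no generating series, and no ``polynomiality in $q^{\pm1/2}$'' here; you are conflating this with the numerical Donaldson--Thomas integrality picture. The paper invokes the ideal-theoretic lemma directly.

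Second, the mechanism you propose for the ``refinement'' (that $\dim\mathfrak{g}_{\mathrm{ad}}$ is absorbed into the virtual shift) is not what makes the bound come out at $0$. The paper's point is rather that, for $\tilde{\mathcal{U}}=(V\times\mathfrak{g}_{\mathrm{ad}})/G$, the Euler class governing $*^{\mathrm{Hall}}_\sigma$ simplifies: the root factors contributed by the $\mathfrak{g}_{\mathrm{ad}}$ summand cancel exactly against the parabolic denominator, so the generators of the image ideal $J_0\subset B$ are the polynomials $\prod_{\langle\lambda,\alpha\rangle<0}\alpha^{\dim V_\alpha}$ built only from the weights of $V$. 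With $l_1,\dots,l_s$ the non-zero $T$-weights of $V$ up to scalar, one has $\sum_j\max_i d_{ij}=\dim V/2$, whence Efimov gives $B^{2d}\subset J_0$ for $2d\geq \dim V-2\dim T+2$. After passing to the virtual grading this yields $P^i_{\tilde{\mathcal{U}}}=0$ for $i\geq -2\dim T+2$, which is $\leq 0$ whenever $\dim T>0$ (the case $T=\{e\}$ being trivial). Your sketch should make this cancellation explicit and track the degrees through Efimov's actual bound rather than appealing to a plethystic heuristic.
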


 This will be proven in the next section following \cite[Proposition 1.4]{hennecart2024cohomological} and we will give another proof in \Cref{section-geometric-proof} based on a geometric argument.
We now start the proof of \Cref{thm-cohint-satisfy} assuming \Cref{thm-cohomological-bound} above.
 We will proceed by the induction both on the dimension of the group and the rank of the representation.
 When $V = 0$, the statement is already proved in \Cref{prop-cohint-BG}, and when $G = \{ e \}$, the theorem is tautological, since the good moduli space morphism is then an isomorphism. Therefore we may assume that the BPS decomposition theorem holds for $W /H$ if $\dim W < \dim V$ or $\dim H < \dim G$.
 Also, recall that it is enough to prove the BPS decomposition theorem for $\tilde{\mathcal{U}} = (V \times \mathfrak{g}_{\mathrm{ad}}) / G$ by \Cref{prop-crit-reduction}.

 First, by using the induction hypothesis on the dimension of the group and the \'etale slice theorem,
 we see that the map $\Psi_{\tilde{\mathcal{U}}}$ is an isomorphism outside of the origin $0 \in \tilde{U} = (V \times \mathfrak{g}_{\mathrm{ad}}) \GIT G$.
 In particular, to prove that the map $\Psi_{\tilde{\mathcal{U}}}$ is an isomorphism, it is enough to show that the induced map on the global sections $\mathrm{H}^*(\Psi_{\tilde{\mathcal{U}}})$ is an isomorphism.

 We now claim that, for each face $(F, \alpha) \in \Face^{\mathrm{nd}}(\tilde{\mathcal{U}})$ with $F \neq 0$, the BPS decomposition theorem holds for $\tilde{\mathcal{U}}_{\alpha}$.
 We let $\lambda_F$ denote a generic cocharacter contained in $F$.
 Then we have $\tilde{\mathcal{U}}_{\alpha} = (V \times \mathfrak{g}_{\mathrm{ad}})^{\lambda_F}  / L_{\lambda_F}$ and $\mathcal{U}_{\bar{\alpha}} = V^{\lambda_F}  / L_{\lambda_F}$ where $\bar{\alpha} \in\Face^{\nd}(\CU)$ denotes the image of $\alpha$ by the map $\Face^{\nd}(\tilde{\CU}) \to \Face^{\nd}(\CU)$. 
 By the induction hypothesis, the BPS decomposition theorem holds for $\mathcal{U}_{\bar{\alpha}}$, namely, the map $\Psi_{\mathcal{U}_{\bar{\alpha}}}$ is an isomorphism.
 Now, by using the induction hypothesis again, 
 the BPS decomposition theorem holds for $\tilde{\mathcal{U}}_{\alpha} = (V \times \mathfrak{g}_{\mathrm{ad}})^{\lambda_F}  / L_{\lambda_F}$ outside the origin, namely, the map $\Psi_{\tilde{\mathcal{U}}_{\alpha}}$ has cofibre $K$ supported on the origin.
 Let $q_{\alpha} \colon \tilde{\mathcal{U}}_{\alpha} = (V \times \mathfrak{g}_{\mathrm{ad}})^{\lambda_F}  / L_{\lambda_F} \to \mathbb{A}^1$ be a non-degenerate quadratic function on $\mathfrak{g}_{\mathrm{ad}}^{\lambda_F}\cong \mathrm{Z}(\mathfrak{g}_{\mathrm{ad}}^{\lambda_F})\oplus [\mathfrak{g}_{\mathrm{ad}}^{\lambda_F},\mathfrak{g}_{\mathrm{ad}}^{\lambda_F}]$ induced by the Killing form of $[\mathfrak{g}_{\mathrm{ad}}^{\lambda_F},\mathfrak{g}_{\mathrm{ad}}^{\lambda_F}]$ and any non-degenerate quadratic function on $\mathrm{Z}(\mathfrak{g}_{\mathrm{ad}}^{\lambda_F})$.
 The critical locus of $q_{\alpha}$ is $\mathcal{U}_{\bar{\alpha}}$, and if we write $\bar{q}_{\alpha} \colon \tilde{U}_{\alpha} \to \mathbb{A}^1$ the function induced by $q_{\alpha}$,
 the map $\varphi_{\bar{q}_{\alpha}}(\Psi_{\tilde{\mathcal{U}}_{\alpha}})$ has cofibre isomorphic to $K$ since $K$ is supported on the zero-locus of $\overline{q}_{\alpha}$.
 Since $\varphi_{\bar{q}_{\alpha}}(\Psi_{\tilde{\mathcal{U}}_{\alpha}})$ is naturally identified with $\Psi_{\mathcal{U}_{\bar{\alpha}}}$ (see \cite[\S 9.2.3]{bu2025cohomology} for the detail), we conclude that $K = 0$ as desired.

 We now consider the following partial cohomological PBW maps
 \begin{align*}
    \Psi_{\tilde{\mathcal{U}}}' \colon &\bigoplus_{\substack{(F, \alpha) \in \Face^{\nd}(\mathcal{U}) \\ F \neq 0}} (g_{\alpha, *} \mathcal{BPS}^{\alpha}_{\tilde{U}} \otimes \mathrm{H}^*(\mathrm{B} \mathbb{G}_{\mathrm{m}}^{\dim F})_{\mathrm{vir}} \otimes \sgn_{\alpha})^{\mathrm{Aut}(\alpha)} \xrightarrow{ \sum_{\alpha} \Gamma_{\sigma_{\alpha}}} p_* \mathcal{IC}_{\tilde{\mathcal{U}}} \\
    \Psi_{\tilde{\mathcal{U}}}'' \colon &\bigoplus_{\substack{(F, \alpha) \in \Face^{\nd}(\mathcal{U}) \\ F \neq 0, \Lambda_T\otimes\BQ}} (g_{\alpha, *}  \mathcal{BPS}^{\alpha}_{\tilde{U}} \otimes \mathrm{H}^*(\mathrm{B} \mathbb{G}_{\mathrm{m}}^{\dim F})_{\mathrm{vir}} \otimes \sgn_{\alpha})^{\mathrm{Aut}(\alpha)} \xrightarrow{ \sum_{\alpha} \Gamma_{\sigma_{\alpha}}} p_* \mathcal{IC}_{\tilde{\mathcal{U}}}.
 \end{align*}
 We first claim that the map ${}^\mathrm{p}\! \mathcal{H}( \Psi_{\tilde{\mathcal{U}}}'' )$ induced on the perverse cohomologies is injective.
 To see this, since we already know that $\Psi_{\tilde{\mathcal{U}}}$ is an isomorphism outside the origin, it is enough to show that the kernel of ${}^\mathrm{p}\! \mathcal{H}( \Psi_{\tilde{\mathcal{U}}}'' )$ does not contain any shifted constant sheaf supported on the origin.
 However, since $g_{\alpha}$ is finite and $\mathcal{BPS}^{\alpha}_{\tilde{U}}$ is either zero or the intersection complex, we conclude that the source of ${}^\mathrm{p}\! \mathcal{H}( \Psi_{\tilde{\mathcal{U}}}'' )$ does not contain a constant sheaf supported on the origin as a direct summand.
In particular, we conclude that ${}^\mathrm{p}\! \mathcal{H}( \Psi_{\tilde{\mathcal{U}}}'' )$  induces an injection, hence so does the global cohomology $\mathrm{H}^*(\Psi_{\tilde{\mathcal{U}}}'' )$ by \Cref{lem-injective} (1).
The cofibre of ${}^\mathrm{p}\! \mathcal{H}( \Psi_{\tilde{\mathcal{U}}}'' )$ is of the form $L \oplus \mathcal{BPS}_{\tilde{U}}^{\alpha_{\mathrm{min}}}$ where $\alpha_{\mathrm{min}}$ is the minimal face (i.e. the unique face with $F = 0$) and $L$ is supported on the origin.
Further, since $L$ is a direct summand of $p_{*} \mathcal{IC}_{\tilde{\mathcal{U}}}$, by \Cref{proposition:smallness_gms}, $L$ is concentrated in positive degrees (since we assume $V \neq 0$). The morphism $\BPS_{\tilde{U}}^{\alpha_{\mathrm{min}}}\rightarrow \cofib(\Psi''_{\mathcal{U}})$ has cofibre $L$ and satisfies the assumptions of \Cref{lem-injective}. By passing to the global sections and using \Cref{lem-injective} (2), we obtain a natural short exact sequence
\begin{equation}\label{eq-compare-1}
 0 \to     \mathrm{H}^*(\mathcal{BPS}_{\tilde{U}}^{\alpha_{\mathrm{min}}}) \to \mathrm{coker} (\mathrm{H}^*(\Psi''_{\tilde{\CU}})) \to \mathrm{H}^*(L) \to 0.
\end{equation}

We next prove the following equality
\[
 \Im \mathrm{H}^*(\Psi_{\tilde{\mathcal{U}}}' ) = \Im\Phi_{\tilde{\mathcal{U}}}' .
\]
Here, the map $\Phi'_{\tilde{\mathcal{U}}}$ is defined in \eqref{eq-Phi'}. The inclusion $\subset$ is obvious.
To prove the reverse inclusion, take an element $a \in \mathrm{H}^*(\tilde{\mathcal{U}}_{\alpha})$ for some $ (F, \alpha) \in \Face^{\mathrm{nd}}(\tilde{\mathcal{U}})$ with $F \neq 0$.
Since the cohomological Hall induction map $\mathrm{H}^*(\Gamma_{\sigma_{\alpha}})$ is invariant under the $\Aut(\alpha)$-action as shown in \cite[(8.2.2.1)]{bu2025cohomology},
the image of $\mathrm{H}^*(\Psi_{\tilde{\mathcal{U}}}' )$ is unchanged if we do not take $\Aut(\alpha)$-invariant part in the source.
In particular, using the associativity of the cohomological Hall induction \eqref{eq-CoHI-assoc}, it is enough to show that $a$ is contained in the image of $\mathrm{H}^*(\Psi_{\tilde{\mathcal{U}}_{\alpha}} )$.
However, this comes from the fact that $\tilde{\mathcal{U}}_{\mathcal{\alpha}}$ satisfies the BPS decomposition theorem, as we have already shown.
In particular, we have a short exact sequence
\begin{equation}\label{eq-compare-2}
   (\mathrm{H}^*(\mathrm{B} T)_{\mathrm{vir}} \otimes \mathrm{sgn}_{\alpha_{\mathrm{max}}})^{\Aut(\alpha_{\mathrm{max}})} \to \mathrm{coker} (\mathrm{H}^*(\Psi_{\tilde{\mathcal{U}}}''))  \to P_{\tilde{\mathcal{U}}} \to 0,
\end{equation}
where the first map is induced by the cohomological Hall induction.
Here, $\alpha_{\mathrm{max}}$ denotes the maximal non-degenerate face, hence $\Aut(\alpha_{\mathrm{max}})$ is the Weyl group $W$ of $G$.
We now look at \eqref{eq-compare-1} and \eqref{eq-compare-2}.
Note that $\mathrm{H}^*(\mathcal{BPS}_{\tilde{U}}^{\alpha_{\mathrm{min}}})$ and $P_{\tilde{\mathcal{U}}}$ are concentrated in non-positive cohomological degree by the Artin vanishing and \Cref{thm-cohomological-bound}.
On the other hand, $\mathrm{H}^*(L)$ and $ (\mathrm{H}^*(\mathrm{B} T)_{\mathrm{vir}} \otimes \mathrm{sgn}_{\alpha_{\mathrm{max}}})^{\Aut(\alpha_{\mathrm{max}})}$ are concentrated in positive cohomological degrees (as $T \neq 0$).
In particular, the composition
\[
    \mathrm{H}^*(\mathcal{BPS}_{\tilde{U}}^{\alpha_{\mathrm{min}}}) \to  \mathrm{coker} (\mathrm{H}^*(\Psi_{\tilde{\mathcal{U}}}''))  \to P_{\tilde{\mathcal{U}}} 
\]
is an isomorphism. Together with \eqref{eq-compare-2}, this shows that the absolute cohomological PBW map $\mathrm{H}^*(\Psi_{\tilde{\mathcal{U}}})$ is surjective. Since the relative cohomological PBW map $\Psi_{\tilde{\mathcal{U}}}$ is an isomorphism outside the origin by the induction hypothesis, we see that $\pH(\Psi_{\tilde{\mathcal{U}}})$ is also surjective by \Cref{lem-surjective}. We now claim that $\pH(\Psi_{\tilde{\mathcal{U}}})$ is injective. To see this, since we already know that it is an isomorphism outside the origin and that $\mathcal{BPS}_{\tilde{U}}^{\alpha}$ has zero-dimensional support if and only if $\alpha = \alpha_{\mathrm{max}}$, it is enough to show that the cohomological Hall induction map between the perverse cohomology
\[
    (g_{\alpha_{\mathrm{max}}, *} \mathcal{BPS}^{\alpha_{\mathrm{max}}}_{\tilde{U}} \otimes \mathrm{H}^*(\mathrm{B} T)_{\mathrm{vir}} \otimes \sgn_{\alpha_{\mathrm{max}}})^{\mathrm{Aut}(\alpha_{\mathrm{max}})} \xrightarrow{\pH(*_{\sigma_{\alpha_{\mathrm{max}}}}^{\mathcal{H}\mathrm{all}})} \pH (p_* \mathcal{IC}_{\tilde{\mathcal{U}}})
\]
is injective. Since $\mathcal{BPS}^{\alpha_{\mathrm{max}}}_{\tilde{U}}\cong\BQ_{\{0\}}$ has zero-dimensional support, the injectivity is equivalent to the injectivity on the global section
\[
     \Upsilon \colon (\mathrm{H}^*(\mathrm{B} T)_{\mathrm{vir}} \otimes \mathrm{sgn}_{\alpha_{\mathrm{max}}})^{\Aut(\alpha_{\mathrm{max}})} \to \gr_{\mathfrak{P}}\mathrm{H}^*(\tilde{\mathcal{U}})_{\vir},
\]
where $\gr_{\mathfrak{P}}$ denote the associated graded with respect to the perverse filtration.

We will deduce the contradiction by computing the asymptotic behaviour of the graded dimension.
For a $\mathbb{Z}$-graded vector space $V^{*}$, we let $P_t(V^*)$ denote the function 
\[
    n \mapsto \dim V^0 + \dim V^1 +  \cdots + \dim V^{n}.
\] 
First, we claim the inequality 
\begin{equation}\label{eq-easy-estimate}
    \liminf_{t \to \infty} P_t ((\mathrm{H}^*(\mathrm{B} T)_{\mathrm{vir}} \otimes \mathrm{sgn}_{\alpha_{\mathrm{min}}})^{\Aut(\alpha_{\mathrm{min}})}) / t^{\dim T} > 0.
\end{equation}
To see this, note first that 
\begin{equation}\label{eq-U-estimate}
    \liminf_{t \to \infty} P_t (\mathrm{H}^*(\tilde{\mathcal{U}}))_{\mathrm{vir}} / t^{\dim T} > 0,
\end{equation}
which follows, for example, from the fact that $\mathrm{H}^*(\mathrm{B} G)$ is isomorphic to the polynomial ring with $\dim T$-variables. On the other hand, for $\alpha \neq \alpha_{\mathrm{max}}$, we have
\begin{equation}\label{eq-obvious-estimate}
    \limsup_{t \to \infty} P_t((\mathrm{H}^*( \mathcal{BPS}^{\alpha}_{\tilde{U}}) \otimes \mathrm{H}^*(\mathrm{B} \mathbb{G}_{\mathrm{m}}^{\dim F})_{\mathrm{vir}} \otimes \sgn_{\alpha})^{\mathrm{Aut}(\alpha)}) / t^{\dim T } = 0
\end{equation}
since $\mathrm{H}^*( \mathcal{BPS}^{\alpha}_{\tilde{U}})$ is finite-dimensional and $\dim F < \dim T$. In particular, the surjectivity of the cohomological PBW map $\mathrm{H}^*(\Psi_{\tilde{\mathcal{U}}})$ implies the inequality \eqref{eq-easy-estimate}.
Therefore we have $(\mathrm{H}^*(\mathrm{B} T)_{\mathrm{vir}} \otimes \mathrm{sgn}_{\alpha_{\mathrm{max}}})^{\Aut(\alpha_{\mathrm{max}})} \neq 0$.    
Next, we claim that 
\begin{equation}\label{eq-important-estimate}
    \lim_{t \to \infty} \left(P_t ((\mathrm{H}^*(\mathrm{B} T)_{\mathrm{vir}} \otimes \mathrm{sgn}_{\alpha_{\mathrm{max}}})^{\Aut(\alpha_{\mathrm{max}})}) -     P_t (\mathrm{H}^*(\mathrm{B} T)^{\Aut(\alpha_{\mathrm{max}})})\right) / t^{\dim T} = 0.
\end{equation}
To see this, take a non-zero element $e \in (\mathrm{H}^N(\mathrm{B} T) \otimes \mathrm{sgn}_{\alpha_{\mathrm{max}}})^{\Aut(\alpha_{\mathrm{max}})}$ for some $N\in\BN$.
Then we have a sequence of injections
\[
    \mathrm{H}^*(\mathrm{B} T)^{\Aut(\alpha_{\mathrm{max}})} \xrightarrow{\cdot e} (\mathrm{H}^{* + N}(\mathrm{B} T) \otimes \mathrm{sgn}_{\alpha_{\mathrm{max}}})^{\Aut(\alpha_{\mathrm{max}})} \xrightarrow{\cdot e}  \mathrm{H}^{* + 2N}(\mathrm{B} T)^{\Aut(\alpha_{\mathrm{max}})},
\] 
which gives the desired equality.

Assume now that $\Upsilon$ is not injective, and take $0 \neq a \in (\mathrm{H}^*(\mathrm{B} T)_{\mathrm{vir}} \otimes \mathrm{sgn}_{\alpha_{\mathrm{max}}})^{\Aut(\alpha_{\mathrm{max}})}$ with $\Upsilon (a) = 0$.
Since the cohomological Hall induction map is a morphism of modules over $\mathrm{H}^*(\tilde{\mathcal{U}})$,
we see that $\Upsilon$ is a module homomorphism over $\mathrm{H}^*(\mathrm{B} G) \cong \mathrm{H}^*(\mathrm{B} T)^{\mathrm{Aut}(\alpha_{\mathrm{max}})}$.
In particular, for any $c \in \mathrm{H}^*(\mathrm{B} G) \cong \mathrm{H}^*(\mathrm{B} T)^{\mathrm{Aut}(\alpha_{\mathrm{max}})}$, we have $\Upsilon (c \cdot a) = 0$.
Since the multiplication map $\cdot a$ induces an injection,
using \eqref{eq-important-estimate}, we conclude that 
\[
    0\leq\lim_{t \to \infty} P_t (\Im \Upsilon) / t^{\dim T}\leq \lim_{t\to\infty} P_t \left((\mathrm{H}^*(\mathrm{B} T)_{\mathrm{vir}} \otimes \mathrm{sgn}_{\alpha_{\mathrm{max}}})^{\Aut(\alpha_{\mathrm{max}})}/a\cdot\mathrm{H}^*(\mathrm{B} T)^{\mathrm{Aut}(\alpha_{\mathrm{max}})} \right)/t^{\dim T}= 0.
\]
However, this contradicts the fact that $\mathrm{H}^* (\pH(\Psi_{\tilde{\mathcal{U}}}))$ is surjective, since we have \eqref{eq-U-estimate} and \eqref{eq-obvious-estimate}.

This completes the proof of \Cref{thm-cohint-satisfy}.

\section{Cohomological bounds for algebraic BPS cohomology}
\label{section:cohomological_bounds}

%Following my June 2024 paper and Efimov's lemma 2.3, we give bounds for the BPS cohomology of V\times\mathfrak{g} where V is a symmetric representation of a reductive group. 

We now prove \Cref{thm-cohomological-bound}.
This is a slight refinement of the bounds given by \cite[Proposition~1.4]{hennecart2024cohomological}. For completeness, we recall the proof here, using \cite[Lemma~3.6]{efimov2012cohomological}.

In the notations of \emph{loc.cit.}, we let $B\coloneqq \Sym(\Ft^*)\cong\BQ[z_1,\hdots,z_{\dim \Ft}]\cong\rmH^*_{T}(\pt)$. To match the cohomological grading, we set $\deg(z_i)=2$ (and so degrees of polynomials are doubled compared to \cite{efimov2012cohomological}). We consider the finite collection of polynomials
\[
\left\{\prod_{\substack{\alpha\in\rmX^*(T)\\\langle\lambda,\alpha\rangle<0}}\alpha^{\dim V_{\alpha}}\mid \lambda\in\Lambda_T\text{ such that } V^{\lambda}\times\Fg^{\lambda}_{\mathrm{ad}}\subsetneq V\times \Fg_{\mathrm{ad}} \right\}\in B\,.
\]
We may relabel these polynomials $\{P_1,\hdots,P_r\}$. They have the form described by \cite[Lemma~3.6]{efimov2012cohomological}, where in the notations of \emph{loc.cit.} $\{l_1,\hdots,l_s\}\subset \Ft^*= B^2$ is a collection of linear forms which is a set of representatives of the non-zero weights of $V$ up to non-zero scalars. Up to a non-zero scalar, we may write $P_i=\prod_{j=1}^sl_j^{d_{i,j}}$. Then, $\sum_{j=1}^s\max_{1\leq i\leq r}d_{i,j}=\frac{\dim V}{2}$. Moreover, by \cite[Proposition~4.19]{hennecart2024cohomological}, the ideal $(P_1,\hdots,P_s)\subset B$ has finite codimension. Using the equivalence \cite[Lemma~3.6 (1)$\iff$ (2)]{efimov2012cohomological}, we obtain $B^{2d}\subset J_0$ for $2d\geq \dim V-2\dim (T)+2$, where $J_0\subset B$ is such that $J_0^{W}$ is the image of $\Phi'_{\tilde{\mathcal{U}}}$. By taking into account the cohomological shift by $\dim V - \dim \mathrm{Z}(G^{\circ})$ in the definition of $P^i_{\tilde{\mathcal{U}}}$, we obtain $P^i_{\tilde{\mathcal{U}}}=0$ for $i\geq\dim V-2\dim T+2-\dim V + \dim \mathrm{Z}(G^{\circ})= \dim \mathrm{Z}(G^{\circ}) -2\dim T +2$.
This concludes if $\dim T > 0$, and the statement is obvious if $T = \{ e \}$.

\section{BPS decomposition for $0$-shifted and $(-1)$-shifted symplectic stacks }\label{section:applications}

In this section, we will prove the BPS decomposition theorem for $(-1)$-shifted symplectic stacks and $0$-shifted symplectic stacks as an application of our main result Theorem \ref{theorem:main_theorem}.
Throughout the section, we will use the language of shifted symplectic geometry introduced by Pantev--To\"en--Vaqui\'e--Vezzosi \cite{pantev2013shifted}.
We refer the readers to \cite[\S 3]{bu2025cohomology} for a summary of shifted symplectic geometry we use in this section.

\subsection{BPS decomposition for $(-1)$-shifted symplectic stacks}

In this subsection, we let $\mathcal{X}$ be a $(-1)$-shifted symplectic stack satisfying the assumptions in \Cref{para:assumptions-stacks}.
Assume further that $\mathcal{X}$ is equipped with an orientation, i.e., a choice of a $\mathbb{Z}/ 2 \mathbb{Z}$-graded line bundle $\mathcal{L}$ on $\mathcal{X}$ and an isomorphism $o \colon \mathcal{L}^{\otimes 2} \cong \det(\mathbb{L}_{\mathcal{X}})$.
Under this situation, \textcite{ben2015darboux} constructed a monodromic mixed Hodge module
\[
    \varphi_{\mathcal{X}} =  \varphi_{\mathcal{X}, o} \in \mathsf{MMHM}(\mathcal{X})    
\]
called the \textit{Donaldson--Thomas mixed Hodge module}.
We refer to \cite[\S 6]{bu2025cohomology} for a brief summary of the Donaldson--Thomas mixed Hodge modules.

Let $(F, \alpha) \in \Face^{\nd}(\mathcal{X})$ be a non-degenerate face.
It is shown in \cite[\S 6.1.11]{bu2025cohomology} that $\mathcal{X}_{\alpha}$ is naturally equipped with a $(-1)$-shifted symplectic structure and an $\Aut(\alpha)$-equivariant orientation $\alpha^{\star} o$.
In particular, we can define the Donaldson--Thomas mixed Hodge module $\varphi_{\mathcal{X}_{\alpha}}$ on $\mathcal{X}_{\alpha}$ and it is equivariant with respect to the natural $\Aut(\alpha)$-action.
Let $p_{\alpha} \colon \mathcal{X}_{\alpha} \to X_{\alpha}$ be the good moduli space morphism.
We define the BPS sheaf $\mathcal{BPS}_{X}^{\alpha}$ by
\[
    \mathcal{BPS}_{X}^{\alpha}  \coloneqq \mathcal{H}^0(p_{\alpha, *} \varphi_{\mathcal{X}_{\alpha}} \otimes \mathscr{L}^{- \dim F / 2}) \in \mathsf{MMHM}(X_{\alpha}).
\]
It is equivariant with respect to the $\Aut(\alpha)$-action.
In \cite[Proposition 7.2.9]{bu2025cohomology}, it is shown that the BPS sheaf is the lowest possibly non-vanishing perverse cohomology. Therefore there is a natural morphism
\[
 \mathcal{BPS}^{\alpha}_{X} \otimes \mathscr{L}^{\dim F / 2} \to p_{\alpha, *} \varphi_{\mathcal{X}_{\alpha}}.
\]
Using the global equivariant parameter, one may extend this to a map 
\begin{equation}\label{eq-global-equivariant-parameter-map}
 \mathcal{BPS}^{\alpha}_{\mathcal{X}} \otimes \mathrm{H}^*(\mathrm{B} \mathbb{G}_{\mathrm{m}}^{\dim F})_{\mathrm{vir}} \to p_{\alpha, *} \varphi_{\mathcal{X}_{\alpha}}.
\end{equation}

We now explain the construction of the $(-1)$-shifted symplectic version of the cohomological PBW map \eqref{eq-cohint-map}.
To explain this, choose a chamber $\sigma \subset F$ in the cotangent arrangement and consider the following commutative diagram:
\[\begin{tikzcd}
	& {\mathcal{X}_{\sigma}^+} \\
	{\mathcal{X}_{\alpha}} && {\mathcal{X}} \\
	{X_{\alpha}} && {X.}
	\arrow["{\gr_{\sigma}}"', from=1-2, to=2-1]
	\arrow["{\ev_{\sigma}}", from=1-2, to=2-3]
	\arrow["{p_{\alpha}}"', from=2-1, to=3-1]
	\arrow["p", from=2-3, to=3-3]
	\arrow["g_{\alpha}", from=3-1, to=3-3]
\end{tikzcd}\]
In \cite[\S 8.1.6]{bu2025cohomology}, the authors constructed the \textit{cohomological Hall induction} map
\[
 *^{\mathrm{Hall}}_{\sigma} \colon g_{\alpha, *} p_{\alpha, *} \varphi_{\mathcal{X}_{\alpha}} \to p_* \varphi_{\mathcal{X}} 
\]
building on \cite[Theorem B]{kinjo2024cohomological}. By combining this map with \eqref{eq-global-equivariant-parameter-map}, we obtain a natural map
\[
   \Gamma_{\sigma} \colon g_{\alpha, *} \mathcal{BPS}_{X}^{\alpha} \otimes \mathrm{H}^*(\mathrm{B} \mathbb{G}_{\mathrm{m}}^{\dim F})_{\mathrm{vir}}  \to p_* \varphi_{\mathcal{X}}.
\]
For each non-degenerate face $(F, \alpha) \in \Face^{\nd}(\CX)$, we choose a cone $\sigma_{\alpha} \subset F$ in the cotangent arrangement and define the cohomological PBW map as
\begin{equation}\label{eq-coh-int--1}
 \Psi = \Psi_{\mathcal{X}} \colon \bigoplus_{(F, \alpha) \in \Face^{\nd}(\CX)}  \left( g_{\alpha, *} \mathcal{BPS}_{X}^{\alpha} \otimes \mathrm{H}^*(\mathrm{B} \mathbb{G}_{\mathrm{m}}^{\dim F})_{\mathrm{vir} } \right)^{\Aut(\alpha)} \xrightarrow[]{\sum \Gamma_{\sigma_{\alpha}}} p_* \varphi_{\mathcal{X}}.
\end{equation}
\begin{theorem}\label{thm-main--1}
    The cohomological PBW map \eqref{eq-coh-int--1} is an isomorphism.
\end{theorem}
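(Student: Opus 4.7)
The plan is to reduce the $(-1)$-shifted symplectic statement to the smooth case established in \Cref{thm-cohint-satisfy} (i.e.\ \Cref{theorem:main_theorem}) by means of an étale-local Darboux presentation together with the vanishing cycle functor. Since the claim is about an isomorphism of monodromic mixed Hodge modules on the good moduli space $X$, it suffices to establish it étale-locally on $X$.

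First, I would combine the Darboux theorem of Brav--Bussi--Dupont--Joyce--Meinhardt in its stacky form (as used in \cite{ben2015darboux}) with the étale slice theorem of \cite{alper2020luna} to assume étale-locally on $X$ that $\mathcal{X} = \mathrm{Crit}(f)$ for some regular function $f \colon \mathcal{U} \to \mathbb{A}^1$ on a smooth Artin stack $\mathcal{U}$ satisfying the assumptions of \Cref{para:assumptions-stacks}, with the given orientation on $\mathcal{X}$ matching the one induced by this presentation. Writing $\bar{f} \colon U \to \mathbb{A}^1$ for the function induced on the good moduli space, proper base change gives $\varphi_{\bar{f}}(p_* \mathcal{IC}_{\mathcal{U}}) \cong p_* \varphi_{\mathcal{X}}$. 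The next step is to apply $\varphi_{\bar{f}}$ to the smooth integrality isomorphism $\Psi_{\mathcal{U}}$ of \eqref{eq-cohint-map}, which is an isomorphism by \Cref{thm-cohint-satisfy}, and match it term by term with $\Psi_{\mathcal{X}}$. For each non-degenerate face $(F, \alpha) \in \Face^{\nd}(\mathcal{X})$ with associated face $\bar{\alpha} \in \Face^{\nd}(\mathcal{U})$, the restricted function $f|_{\mathcal{U}_{\bar{\alpha}}}$ has critical locus $\mathcal{X}_{\alpha}$, and the vanishing cycle of the smooth BPS sheaf $\mathcal{BPS}_{U}^{\bar{\alpha}}$ is identified with the $(-1)$-shifted BPS sheaf $\mathcal{BPS}_{X}^{\alpha}$ up to Tate twist, since both are characterized as the lowest possibly non-vanishing perverse cohomology of the pushforward (cf.\ \Cref{proposition:smallness_gms} and \cite[Proposition 7.2.9]{bu2025cohomology}). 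The relative cohomological Hall induction map \eqref{eq-rel-CoHI} is transported under $\varphi$ to the $(-1)$-shifted CoHI map of \cite[\S 8.1.6]{bu2025cohomology}; indeed the latter is defined precisely by applying vanishing cycles to the former along the correspondence $\mathcal{U}_{\bar{\alpha}} \leftarrow \mathcal{U}_{\sigma}^+ \to \mathcal{U}$, using \cite[Theorem B]{kinjo2024cohomological}. Finally, the global equivariant parameter map \eqref{eq-BPS-gloeq-map} transforms into \eqref{eq-global-equivariant-parameter-map} by compatibility of $\varphi$ with multiplication by Chern classes of line bundles pulled back from the base.

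The main obstacle is globalization: a Darboux presentation is only étale-local, and different charts produce different smooth models $\mathcal{U}$. The resolution is that both the source and target of $\Psi_{\mathcal{X}}$ are canonically attached to the oriented $(-1)$-shifted symplectic structure on $\mathcal{X}$ (the $\mathcal{BPS}_{X}^{\alpha}$ are defined intrinsically from $\varphi_{\mathcal{X}_{\alpha}}$, and the cohomological Hall induction map of \cite[\S 8.1.6]{bu2025cohomology} is itself constructed independently of any chart), so $\Psi_{\mathcal{X}}$ is intrinsic, and it suffices to know that it is an isomorphism étale-locally on $X$, which is exactly what the vanishing cycle argument delivers. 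A secondary technicality is keeping track of Tate twists and orientation signs in the identifications above; these can be handled face by face using the standard dictionary between Donaldson--Thomas mixed Hodge modules and vanishing cycles of intersection complexes on smooth charts, and the same mechanism simultaneously transports the sign representation $\sgn_{\alpha}$ used in \eqref{eq-cohint-map} to the one appearing in \eqref{eq-coh-int--1}.
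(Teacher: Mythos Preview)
Your proposal is correct and follows essentially the same approach as the paper, which simply cites \Cref{theorem:main_theorem} together with \cite[\S 9.3.4]{bu2025cohomology}; you have unpacked exactly what that citation entails (étale-local Darboux charts, applying the vanishing cycle functor to $\Psi_{\mathcal{U}}$, and matching BPS sheaves, CoHI maps, and equivariant parameters term by term). One small wording slip: there is no $\sgn_{\alpha}$ factor in \eqref{eq-coh-int--1}---the sign from the smooth map \eqref{eq-cohint-map} is absorbed into the $\Aut(\alpha)$-equivariant structure of $\varphi_{\mathcal{X}_{\alpha}}$ via the induced orientation $\alpha^{\star} o$, rather than transported to a sign on the other side.
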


\begin{proof}
    This follows from \Cref{theorem:main_theorem} and the discussion in \cite[\S 9.3.4]{bu2025cohomology}.
\end{proof}

\subsection{BPS decomposition for the character stacks of $3$-manifolds}
We now specialize \Cref{thm-main--1} to the case of character stacks on compact oriented $3$-manifolds.
Let $M$ be a compact oriented $3$-manifold and $G$ be a connected reductive group.
We let $\mathcal{L}\mathrm{oc}_G(M)$ denote the moduli stack of $G$-local systems on $M$.
By \cite[\S 2.1]{pantev2013shifted}, $\mathcal{L}\mathrm{oc}_G(M)$ is $(-1)$-shifted symplectic, and it is shown in \cite[Theorem 3.45]{naef2023torsion} that $\mathcal{L}\mathrm{oc}_G(M)$ admits a canonical orientation.
Also, it follows from \cite[\S 3.1.3(iii), Lemma 9.1.3]{bu2025cohomology} that the stack $\mathcal{L}\mathrm{oc}_G(M)$ satisfies the assumptions in \Cref{para:assumptions-stacks}.
Therefore the BPS decomposition theorem holds for $\mathcal{L}\mathrm{oc}_G(M)$.
We explicitly express the formula following \cite[Theorem 10.3.30]{bu2025cohomology}.

For a Levi and parabolic subgroups $L \subset P \subset G$, consider the following commutative diagram
\[\begin{tikzcd}
	& {\mathcal{L}\mathrm{oc}_{P}(M)} \\
	{\mathcal{L}\mathrm{oc}_{L}(M)} && {\mathcal{L}\mathrm{oc}_{G}(M)} \\
	{\mathrm{Loc}_{L}(M)} && {\mathrm{Loc}_{G}(M).}
	\arrow["{\gr_P}"', from=1-2, to=2-1]
	\arrow["{\ev_P}", from=1-2, to=2-3]
	\arrow["{p_L}"', from=2-1, to=3-1]
	\arrow["{p_G}", from=2-3, to=3-3]
	\arrow["{g_L}"', from=3-1, to=3-3]
\end{tikzcd}\]
Here, vertical maps are good moduli morphisms.
We set $c_L \coloneqq \dim \mathrm{Z}(L)$ and define
\[
     \mathcal{BPS}_{\mathrm{Loc}_L(M)} \coloneqq \mathcal{H}^{0}(p_{L, *} \varphi_{\mathcal{L}\mathrm{oc}_L(M)} \otimes \mathscr{L}^{- c_L/2}).
\]
Let $W_G(L) = \mathrm{N}_G(L) / L$ denote the relative Weyl group. 
Then $W_G(L)$ acts on $\mathrm{Loc}_L(M)$ and the BPS sheaf is equivariant with respect to this action.
With these preparations, we can state the explicit form of the BPS decomposition theorem for character stacks:
\begin{theorem}\label{thm:main-characterstack}
    Let $M$ be a compact oriented $3$-manifold and $G$ a connected reductive group. There exists a natural isomorphism
    \[
     \bigoplus_{\substack{L \subset G : \\ \textnormal{Levi subgroups}}} \left( g_{L, *} (\mathcal{BPS}_{\mathrm{Loc}_L(M)}) \otimes \mathrm{H}^*( \mathrm{B} \mathrm{Z}(L))_{\mathrm{vir}} \right)^{W_G(L)} \cong p_{G, *} \varphi_{\mathcal{L}\mathrm{oc}_G(M)}.
    \]
    Further, the map from the left to right is given by the parabolic induction map as constructed in \cite[Corollary 9.12]{kinjo2024cohomological}.
\end{theorem}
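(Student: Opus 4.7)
The plan is to obtain \Cref{thm:main-characterstack} as a direct specialization of the general $(-1)$-shifted symplectic cohomological integrality theorem \Cref{thm-main--1} applied to $\mathcal{X} = \mathcal{L}\mathrm{oc}_G(M)$, then to translate the abstract face-indexing into the Levi-indexing appearing in the statement. The three hypotheses needed for \Cref{thm-main--1} are already available: the $(-1)$-shifted symplectic structure comes from \cite{pantev2013shifted}, the canonical orientation from \cite[Theorem 3.45]{naef2023torsion}, and the assumptions in \Cref{para:assumptions-stacks} from \cite[\S3.1.3(iii), Lemma 9.1.3]{bu2025cohomology} as noted immediately before the theorem. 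Thus the statement of \Cref{thm-main--1} holds verbatim for $\mathcal{L}\mathrm{oc}_G(M)$, and it remains only to rewrite both the indexing set and the terms.

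The key identification is that of the component lattice. A point of $\mathrm{CL}(\mathcal{L}\mathrm{oc}_G(M))$ valued in $\mathbb{Z}^n$ is a $\mathbb{G}_{\mathrm{m}}^n$-equivariant $G$-local system on $M$, i.e.\ a $G$-local system $E$ together with a cocharacter $\mathbb{G}_{\mathrm{m}}^n \to \Aut(E) \subset G$ (up to conjugation). Standard reductive group theory lets one factor such a cocharacter through $\mathrm{Z}(L)^{\circ}$ for a uniquely determined Levi subgroup $L \subset G$, namely the centralizer of the cocharacter, and non-degeneracy of the face is exactly the condition that the image generates $\mathrm{Z}(L)^{\circ}$. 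This produces a bijection between isomorphism classes of non-degenerate faces $(F,\alpha)$ of $\mathcal{L}\mathrm{oc}_G(M)$ and conjugacy classes of Levi subgroups $L \subset G$, under which $\dim F = \dim \mathrm{Z}(L) = c_L$ and $\Aut(\alpha) = \mathrm{N}_G(L)/L = W_G(L)$. Moreover, $\mathrm{Map}(\mathrm{B}\mathbb{G}_{\mathrm{m}}^n, \mathcal{L}\mathrm{oc}_G(M))$ at the corresponding integral lift computes the stack of $L$-local systems, so $\mathcal{X}_{\alpha} \cong \mathcal{L}\mathrm{oc}_L(M)$, with good moduli space $\mathrm{Loc}_L(M)$ and finite map $g_L \colon \mathrm{Loc}_L(M) \to \mathrm{Loc}_G(M)$.

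Under this dictionary, the BPS sheaf $\mathcal{BPS}^{\alpha}_{X}$ of \Cref{thm-main--1} becomes
\[
\mathcal{H}^0\bigl(p_{L,*} \varphi_{\mathcal{L}\mathrm{oc}_L(M)} \otimes \mathscr{L}^{-c_L/2}\bigr) = \mathcal{BPS}_{\mathrm{Loc}_L(M)},
\]
and the sum over faces becomes the sum over Levi subgroups up to conjugation; this gives the asserted isomorphism at the level of complexes. For the identification of the Hall-induction maps $\Gamma_{\sigma_{\alpha}}$ with parabolic induction, a chamber $\sigma \subset F$ in the cotangent arrangement is cut out by an adjacent parabolic subgroup $P \supset L$, and for this choice the correspondence $\mathcal{L}\mathrm{oc}_L(M) \xleftarrow{\gr_P} \mathcal{L}\mathrm{oc}_P(M) \xrightarrow{\ev_P} \mathcal{L}\mathrm{oc}_G(M)$ is the one whose pull–push defines parabolic induction. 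The resulting Hall map therefore coincides with the $3$-manifold Eisenstein series map of \cite[Corollary 9.12]{kinjo2024cohomological}, as claimed in the second assertion.

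The main obstacle in this strategy is the careful bookkeeping of the component lattice of $\mathcal{L}\mathrm{oc}_G(M)$ and the verification that the identifications in the previous paragraph are compatible with the $W_G(L)$-actions used to take invariants on both sides: one must check that the conjugation action of $W_G(L)$ on $\mathcal{L}\mathrm{oc}_L(M)$ agrees with the $\Aut(\alpha)$-action from the mapping-stack description, and that the cotangent-sign representation is trivial in this setting so that the invariants match those appearing in \Cref{thm-main--1}. All other steps are essentially formal consequences of the general theorem once this dictionary is in place.
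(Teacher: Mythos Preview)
Your approach is correct and matches the paper's: apply \Cref{thm-main--1} to $\mathcal{L}\mathrm{oc}_G(M)$ after verifying its hypotheses, then translate the face-indexing into Levi-indexing; the paper does not spell out this translation but simply cites \cite[Theorem~10.3.30]{bu2025cohomology} for the explicit form. Your closing worry about the cotangent sign is unnecessary, since the $(-1)$-shifted cohomological integrality map \eqref{eq-coh-int--1} already carries no $\sgn_{\alpha}$ twist.
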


\subsection{BPS decomposition for $0$-shifted symplectic stacks}

We now use the dimensional reduction theorem \cite[Theorem 4.14]{kinjo2022dimensional} due to the second named author to prove the BPS decomposition theorem for $0$-shifted symplectic stacks.
Throughout the section, we let $\mathcal{Y}$ denote the $0$-shifted symplectic stack satisfying the assumptions in \Cref{para:assumptions-stacks}.
We note that the symmetricity of $\mathcal{Y}$ is automatic from the existence of the $0$-shifted symplectic structure.

We let $\mathcal{X} = \mathrm{T}^*[-1] \mathcal{Y} \coloneqq \mathrm{Tot}_{\mathcal{Y}}(\mathbb{L}_{\mathcal{Y}}[-1])$ denotes the $(-1)$-shifted cotangent stack of $\mathcal{Y}$.
The stack $\mathcal{X}$ is $(-1)$-shifted symplectic stack together with a canonical orientation. Further, it follows from \cite[Corollary 4.2.10]{bu2025cohomology} that $\mathcal{X}$ also satisfies the assumptions in \Cref{para:assumptions-stacks}.
Therefore the BPS decomposition theorem holds for $\mathcal{X}$.

For each face $(F, \alpha) \in \Face^{\nd}(\mathcal{Y}) \cong \Face^{\nd}(\mathcal{X})$, consider the following commutative diagram:
\[\begin{tikzcd}
	{\mathcal{X_{\alpha}}} & {\mathcal{Y}_{\alpha}} \\
	X_{\alpha} & Y_{\alpha}
	\arrow["\pi_{\alpha}", from=1-1, to=1-2]
	\arrow["{\tilde{p}}_{\alpha}"', from=1-1, to=2-1]
	\arrow["p_{\alpha}", from=1-2, to=2-2]
	\arrow["{\bar{\pi}_{\alpha}}"', from=2-1, to=2-2]
\end{tikzcd}\]
where the vertical maps are good moduli morphisms and the horizontal maps are the projections.
We define the BPS sheaf on $Y_{\alpha}$ by
\[
 \mathcal{BPS}_{Y}^{\alpha} \coloneqq \bar{\pi}_{\alpha, *} \mathcal{BPS}_{X}^{\alpha} \otimes \mathscr{L}^{\dim F /2}. 
\]
It is shown in \cite[Theorem 7.2.15]{bu2025cohomology} that the BPS sheaf $\mathcal{BPS}_{Y}^{\alpha}$ is a pure Hodge module.
Recall from \cite[Theorem 4.14]{kinjo2022dimensional} (see also \cite[(6.2.1.4), Lemma 6.1.13]{bu2025cohomology}) that there is a natural isomorphism of $\Aut(\alpha)$-equivariant mixed Hodge module complexes
\begin{equation}\label{eq-dim-red}
 \pi_{\alpha, *} \varphi_{\mathcal{X}_{\alpha}} \cong  ( \mathbb{D} \mathbb{Q}_{\mathcal{Y}_{\alpha}} \otimes \mathscr{L}^{\vdim \mathcal{Y}_{\alpha} / 2}) \otimes \mathrm{sgn}_{\alpha},
\end{equation}
where $\mathrm{sgn}_{\alpha}$ is the cotangent sign representation of $\mathrm{Aut}(\alpha)$.
From now on, we will equip $\mathcal{BPS}_{Y}^{\alpha}$ with an $\mathrm{Aut}(\alpha)$-equivariant structure twisted by $\mathrm{sgn}_{\alpha}$, so that the natural map
\[
    \mathcal{BPS}_{Y}^{\alpha} \to p_* \mathbb{D} \mathbb{Q}_{\mathcal{Y}_{\alpha}} \otimes \mathscr{L}^{\vdim \mathcal{Y}_{\alpha}/2}
\]
is $\mathrm{Aut}(\alpha)$-equivariant.

With this preparation, we can state the BPS decomposition theorem for $0$-shifted symplectic stacks.
Fix a chamber $\sigma \subset F$ with respect to a cotangent arrangement. Consider the following commutative diagram 
\[\begin{tikzcd}
	& {\mathcal{Y}_{\sigma}^+} \\
	{\mathcal{Y}_{\alpha}} && {\mathcal{Y}} \\
	{Y_{\alpha}} && {Y.}
	\arrow["{\gr_{\sigma}}"', from=1-2, to=2-1]
	\arrow["{\ev_{\sigma}}", from=1-2, to=2-3]
	\arrow["{p_{\alpha}}"', from=2-1, to=3-1]
	\arrow["p", from=2-3, to=3-3]
	\arrow["g_{\alpha}", from=3-1, to=3-3]
\end{tikzcd}\]
Using \eqref{eq-dim-red}, the pushforward of the map \eqref{eq-coh-int--1} along the projection $\bar{\pi} \colon X \to Y$ can be rewritten as
\begin{equation}\label{eq-coh-int-0}
    \bar{\Psi}_{\CY}=\overline{\pi}_*\Psi_{\CX} \colon \bigoplus_{(F, \alpha) \in \Face^{\nd}(\CY)}  \left( g_{\alpha, *} \mathcal{BPS}_{Y}^{\alpha} \otimes \mathrm{H}^*(\mathrm{B} \mathbb{G}_{\mathrm{m}}^{\dim F}) \otimes \mathrm{sgn}_{\alpha} \right)^{\Aut(\alpha)} \xrightarrow[]{} p_* \mathbb{D} \mathbb{Q}_{\mathcal{Y}} \otimes \mathscr{L}^{\vdim \mathcal{Y}/2}.
\end{equation}
Theorem \ref{thm-main--1} implies the following:
\begin{theorem}\label{thm-main0}
    The cohomological PBW map $\bar{\Psi}_{\CY}$ \eqref{eq-coh-int-0} is an isomorphism.
\end{theorem}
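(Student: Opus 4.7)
The plan is to deduce \Cref{thm-main0} from \Cref{thm-main--1} by applying the pushforward functor $\bar{\pi}_*$ along the canonical projection $\bar{\pi}\colon X\to Y$ between good moduli spaces, where $\mathcal{X}=\mathrm{T}^*[-1]\mathcal{Y}$ is the $(-1)$-shifted cotangent stack of $\mathcal{Y}$. Essentially the theorem should fall out of combining the $(-1)$-shifted integrality theorem with the dimensional reduction isomorphism \eqref{eq-dim-red}, so the work lies in verifying that $\bar{\pi}_*\Psi_{\mathcal{X}}$ is precisely the map $\bar{\Psi}_{\mathcal{Y}}$ of \eqref{eq-coh-int-0}.

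First, I would verify that $\mathcal{X}$ is an oriented $(-1)$-shifted symplectic stack satisfying the assumptions of \Cref{para:assumptions-stacks}: the $(-1)$-shifted symplectic form and canonical orientation exist by general theory of $(-1)$-shifted cotangent stacks, and the assumptions transfer from $\mathcal{Y}$ to $\mathcal{X}$ as recalled in the excerpt. Then \Cref{thm-main--1} applies and yields that $\Psi_{\mathcal{X}}$ is an isomorphism in $\mathsf{MMHM}(X)$.

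Next, I would transform both sides of $\Psi_{\mathcal{X}}$ under $\bar{\pi}_*$. On the source side, using the Cartesian-type compatibility between the projections $\bar{\pi}_{\alpha}\colon X_{\alpha}\to Y_{\alpha}$ and the finite maps $g_{\alpha}$, together with the definition $\mathcal{BPS}^{\alpha}_{Y}\coloneqq \bar{\pi}_{\alpha,*}\mathcal{BPS}^{\alpha}_{X}\otimes\mathscr{L}^{\dim F/2}$ and the projection formula, each summand $(g_{\alpha,*}\mathcal{BPS}^{\alpha}_{X}\otimes\mathrm{H}^*(\mathrm{B}\mathbb{G}_{\mathrm{m}}^{\dim F})_{\vir})^{\Aut(\alpha)}$ transforms into $(g_{\alpha,*}\mathcal{BPS}^{\alpha}_{Y}\otimes\mathrm{H}^*(\mathrm{B}\mathbb{G}_{\mathrm{m}}^{\dim F})\otimes\sgn_{\alpha})^{\Aut(\alpha)}$ up to the Tate twist bookkeeping. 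On the target side, \eqref{eq-dim-red} applied to the minimal face $\alpha_{\min}$ identifies $\bar{\pi}_*p_*\varphi_{\mathcal{X}}$ with $p_*\mathbb{D}\mathbb{Q}_{\mathcal{Y}}\otimes\mathscr{L}^{\vdim\mathcal{Y}/2}$, matching the target of $\bar{\Psi}_{\mathcal{Y}}$.

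The remaining technical step is to identify the pushed-forward map with $\bar{\Psi}_{\mathcal{Y}}$ itself, namely to check that $\bar{\pi}_*$ sends the cohomological Hall induction $*_{\sigma}^{\mathcal{H}\mathrm{all}}$ and the global-equivariant-parameter map \eqref{eq-global-equivariant-parameter-map} on $\mathcal{X}$ to the corresponding structures on $\mathcal{Y}$; this is the content of \cite[\S 9.3.4]{bu2025cohomology}. The main obstacle I expect is the careful bookkeeping of the sign twist $\sgn_{\alpha}$: since \eqref{eq-dim-red} introduces a twist by $\sgn_{\alpha}$, the $\Aut(\alpha)$-invariance on the $\mathcal{Y}$-side corresponds to $\Aut(\alpha)$-invariance twisted by $\sgn_{\alpha}$ relative to the untwisted invariants on the $\mathcal{X}$-side, which matches the $\Aut(\alpha)$-equivariant structure on $\mathcal{BPS}^{\alpha}_{Y}$ introduced in the paragraph preceding \eqref{eq-coh-int-0}. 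Once these compatibilities are in place, the isomorphism $\bar{\Psi}_{\mathcal{Y}}=\bar{\pi}_*\Psi_{\mathcal{X}}$ is automatic from the fact that $\Psi_{\mathcal{X}}$ is an isomorphism.
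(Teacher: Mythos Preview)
Your approach is correct and essentially identical to the paper's: in the paper, $\bar{\Psi}_{\mathcal{Y}}$ is \emph{defined} to be $\bar{\pi}_*\Psi_{\mathcal{X}}$ (see the display \eqref{eq-coh-int-0}), with the rewriting of source and target via \eqref{eq-dim-red} carried out in the paragraphs preceding the theorem, so the isomorphism is immediate from \Cref{thm-main--1}. The only difference is organizational: what you present as a verification step (that $\bar{\pi}_*\Psi_{\mathcal{X}}$ agrees with an independently defined $\bar{\Psi}_{\mathcal{Y}}$) is in the paper absorbed into the definition itself.
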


\begin{remark}
    If there is an auxiliary torus $T'$ acting on $\mathcal{Y}$ (which does not necessary preserve the symplectic form), the BPS sheaf $\mathcal{BPS}_{Y}^{\alpha} $ is naturally $T'$-equivariant and the map \eqref{eq-coh-int-0} is also $T'$-equivariant.
    In particular, it induces an isomorphism on the $T'$-equivariant cohomology groups. 
\end{remark}

\section{Applications: purity and Kirwan surjectivity}

In this section, we will apply the BPS decomposition theorems to Halpern-Leistner's purity conjecture for $0$-shifted symplectic stacks \cite[Conjecture 4.4]{halpern2015theta} and Kirwan surjectivity statements for $0$-shifted and $(-1)$-shifted symplectic stacks.

\subsection{Purity conjecture}

Here, we explain the BPS decomposition theorem for $0$-shifted symplectic stacks (= \Cref{thm-main0}) implies the purity properties of the Borel--Moore homology of $0$-shifted symplectic stacks.
We first explain a relative purity result.

\begin{corollary}\label{cor-relative-purity}
    Let $\CY$ be a $0$-shifted symplectic stack with affine diagonal admitting a good moduli space $p \colon \CY \to Y$. Then the complex $p_* \mathbb{D} \mathbb{Q}_{\mathcal{Y}}$ is pure. 
\end{corollary}

\begin{proof}
    Since the statement is local on $Y$, we may use the local structure theorem of Alper--Hall--Rydh \cite[Theorem 4.12]{alper2020luna} to reduce to the case $\CY = \Spec ( A ) / G$ for some reductive group $G$.
    In this case, \cite[\S 2.3.7]{bu2025cohomology} and \cite[Lemma 9.1.3]{bu2025cohomology} imply that $\CY$ satisfies the assumptions in \Cref{para:assumptions-stacks}.
    In particular, using \Cref{thm-main0}, we see that the cohomological PBW map \eqref{eq-coh-int-0} is an isomorphism.
    Since the Hodge module $\mathcal{BPS}_{Y}^{\alpha} $ is pure by \cite[Theorem 7.2.15]{bu2025intrinsic}, we conclude that  $p_* \mathbb{D} \mathbb{Q}_{\mathcal{Y}}$ is pure as desired.
\end{proof}

By passing to the global section, we obtain the following statement, confirming Halpern-Leistner's purity conjecture \cite[Conjecture 4.4]{halpern2015theta}.

\begin{corollary}\label{cor-purity-conj}
    Let $\CY$ be a $0$-shifted symplectic stack with affine diagonal admitting a good moduli space $p \colon \CY \to Y$.
    Assume further that $\CY$ admits a $\mathbb{G}_{\mathrm{m}}$-action such that the induced $\mathbb{G}_{\mathrm{m}}$-action on $Y$ is contracting and $Y^{\mathbb{G}_{\mathrm{m}}}$ is proper.
    Then the Borel--Moore homology $\mathrm{H}_*^{\mathrm{BM}}(\CY)$ is pure.
\end{corollary}

This is a generalization of \cite[Theorem~A]{davison2021purity} which deals with the case of \emph{linear} $0$-shifted symplectic stacks -- that is stacks of objects in $2$-Calabi--Yau categories.
The following stacks are important examples satisfying the hypothesis of \Cref{cor-purity-conj}:

\begin{enumerate}
    \item Let $G$ be a reductive group and $C$ be a smooth projective curve. Then the moduli stack $\CH_{G}^{\mathrm{ss}}$ of semistable $G$-Higgs bundles on $C$ satisfies the hypotheses in  \Cref{cor-purity-conj}.  \vspace{3pt}
    \item Let $G$ be a reductive group and $V$ be a symplectic $G$-representation. Let $\mu \colon V \to \mathfrak{g}^*$ be the moment map. Then the Hamiltonian reduction $\mu^{-1}(0) / G$ satisfies the hypotheses in  \Cref{cor-purity-conj}.
\end{enumerate}

We note that \Cref{cor-purity-conj} for $\CY = \CH_{G}^{\mathrm{ss}}$ can also be proved using \cite[Theorem 9.4.3]{bu2025cohomology}, where the authors prove the BPS decomposition theorem (= \Cref{thm-main0}) under a stronger hypothesis called \emph{almost-orthogonality}, which is satisfied for $\CH_{G}^{\mathrm{ss}}$ and $\mu^{-1}(0) / G$ when $V$ is the cotangent representation, i.e., it is of the form $V = W \oplus W^{\vee}$.
On the other hand, \Cref{cor-purity-conj} for $\CY = \mu^{-1}(0) / G$ for general symplectic representations $V$ requires the general BPS decomposition theorem.

\subsection{Kirwan surjectivity}\label{ssec-Kirwan}

We will apply \Cref{thm-main--1} to the Kirwan surjectivity statement for the critical cohomology of $(-1)$-shifted symplectic stacks and the Borel--Moore homology of $0$-shifted symplectic stacks.
Kirwan surjectivity refers to the phenomenon that the restriction map for cohomological invariants of stacks to the semistable locus is a surjective morphism.

Let $\CX$ be a $(-1)$-shifted symplectic stack satisfying the assumptions \Cref{para:assumptions-stacks} and $\CL$ be a line bundle on $\CX$.
Let $\CX^{\mathrm{ss}} \subset \CX$ denote the open substack consisting of semistable points with respect to $\CL$: see \cite[\S 4.1]{halpern2020derived} and \cite[\S 7.3.1]{bu2025cohomology} for details.
The stack $\CX^{\mathrm{ss}}$ admits a good moduli space $p^{\mathrm{ss}} \colon \CX^{\mathrm{ss}} \to X^{\mathrm{ss}}$ and satisfies the assumptions \Cref{para:assumptions-stacks}.
Consider the following commutative diagram:
\[\begin{tikzcd}
	{\CX^{\mathrm{ss}}} & \CX \\
	{X^{\mathrm{ss}}} & {X.}
	\arrow["\iota", from=1-1, to=1-2]
	\arrow["{p^{\mathrm{ss}}}"', from=1-1, to=2-1]
	\arrow["p", from=1-2, to=2-2]
	\arrow["q"', from=2-1, to=2-2]
\end{tikzcd}\]
We have the following Kirwan surjectivity statement.

\begin{theorem}\label{thm-kirwan--1}
    Let $\CX$ be an oriented $(-1)$-shifted symplectic stack satisfying the assumptions \Cref{para:assumptions-stacks}.
    Let $\CL$ be a line bundle and $\CX^{\mathrm{ss}} \subset \CX$ be the open substack of semistable points.
    Assume further that there exists a $\Theta$-stratification $\CX = \bigcup_{c} \CX_{\leq c}$ with $\CX_{\leq 0} = \CX^{\mathrm{ss}}$.
    Then the restriction map
    \[
     p_* \varphi_{\CX} \to q_* p^{\mathrm{ss}}_* \varphi_{\CX^{\mathrm{ss}}}    
    \]
    is a split surjection. In particular, the restriction map between the global section
    \[
     \mathrm{H}^*(\CX, \varphi_{\CX}) \to    \mathrm{H}^*(\CX^{\mathrm{ss}}, \varphi_{\CX^{\mathrm{ss}}})  
    \]
    is surjective.
\end{theorem}

\begin{proof}
    For a non-degenerate face $(F, \alpha) \in \Face^{\mathrm{nd}}(\CX^{\mathrm{ss}})$, we let $(F, \bar{\alpha}) \in \Face^{\mathrm{nd}}(\CX)$ denote the image.
    Consider the following commutative diagram of good moduli spaces
    \[\begin{tikzcd}
        {(X^{\mathrm{ss}})_{\alpha}} & {X^{\mathrm{ss}}} \\
        {X_{\bar{\alpha}}} & {X.}
        \arrow["{g^{\mathrm{ss}}_{\alpha}}", from=1-1, to=1-2]
        \arrow["{q_{\alpha}}"', from=1-1, to=2-1]
        \arrow["q", from=1-2, to=2-2]
        \arrow["{g_{\bar{\alpha}}}"', from=2-1, to=2-2]
    \end{tikzcd}
    \]
    By \cite[Proposition 2.3.4, Lemma 4.1.19]{halpern2014structure}, the open substack $(\CX^{\mathrm{ss}})_{\alpha} \subset \CX^{\mathrm{ss}}_{\bar{\alpha}}$ is the semistable locus with respect to the line bundle $\CL |_{\CX^{\mathrm{ss}}_{\bar{\alpha}}}$.
    In particular, \cite[Proposition 7.3.6]{bu2025intrinsic} implies an isomorphism
    \begin{equation}\label{eq-wall-crossing}
      \mathcal{BPS}_{X}^{\bar{\alpha}} \cong q_{\alpha, *} \mathcal{BPS}^{\alpha}_{X^{\mathrm{ss}}} .   
    \end{equation}

    Take a chamber $\sigma \subset F$ with respect to the cotangent arrangement for $\CX^{\mathrm{ss}}$ and $\bar{\sigma} \subset \sigma$ be a subcone which is a chamber with respect to the cotangent arrangement for $\CX$.
    Now consider the following diagram:
    \[\begin{tikzcd}
        {\bigoplus_{(F, \alpha) \in \Face^{\nd}(\CX^{\mathrm{ss}})}  g_{\bar{\alpha}, *}\mathcal{BPS}^{\bar{\alpha}}_{X} \otimes \mathrm{H}^* (\mathrm{B} \mathbb{G}_{\mathrm{m}}^{\dim F})} & {\bigoplus_{(F, \alpha) \in \Face^{\nd}(\CX^{\mathrm{ss}})}   g_{\bar{\alpha}, *} p_{\bar{\alpha}, *}\varphi_{\CX_{\bar{\alpha}}}} & {\varphi_{\CX}} \\
        {\bigoplus_{(F, \alpha) \in \Face^{\nd}(\CX^{\mathrm{ss}})}  q_* g_{\alpha, *}^{\mathrm{ss}} \mathcal{BPS}^{\alpha}_{X^{\mathrm{ss}}} \otimes \mathrm{H}^* (\mathrm{B} \mathbb{G}_{\mathrm{m}}^{\dim F})} & {\bigoplus_{(F, \alpha) \in \Face^{\nd}(\CX^{\mathrm{ss}})}  q_* g_{\alpha, *}^{\mathrm{ss}} p^{\mathrm{ss}}_{\alpha, *}\varphi_{(\CX^{\mathrm{ss}})_{{\alpha}}}} & {q_* \varphi_{\CX^{\mathrm{ss}}}.}
        \arrow[from=1-1, to=1-2]
        \arrow[from=2-1, to=2-2]
        \arrow["\cong", "\eqref{eq-wall-crossing}"', from=1-1, to=2-1]
        \arrow["{\sum_{(F,\alpha)} *^{\mathrm{Hall}}_{\bar{\sigma}}}", from=1-2, to=1-3]
        \arrow[from=1-2, to=2-2]
        \arrow[from=1-3, to=2-3]
        \arrow["{\sum_{(F,\alpha)} *^{\mathrm{Hall}}_{\sigma}}", from=2-2, to=2-3]
    \end{tikzcd}\]
    The left square commutes by the proof of \cite[Proposition 7.3.6]{bu2025intrinsic} and the right square commutes by \cite[(6.2.3.3)]{bu2025cohomology}.
    In particular, the outer square commutes.
    The composition of the lower horizontal maps is a split surjection by \Cref{thm-main--1}.
    Therefore we conclude that the right vertical map is a split surjection as desired.
\end{proof}

\begin{remark}
    We expect that the above surjectivity statement holds without the hypotheses on the existence of the good moduli spaces nor the symmetricity condition.
    Such a generalization, together with the integral isomorphism (proved in \cite[Theorem B]{kinjo2024cohomological} and independently in \cite[Theorem 1.2]{descombes2025hyperbolic}), would lead to the wall-crossing formula for the cohomological Donaldson--Thomas invariants, extending Davison--Meinhardt's wall-crossing formula \cite[Theorem B]{davison2020cohomological} for quivers with potentials.
\end{remark}

%     We  use the above Kirwan surjectivity theorem to give a decomposition of the critical cohomology of $\CX$ into the critical cohomology of the centre of $\Theta$-strata.
%     Let $\CX$ be an oriented $(-1)$-shifted symplectic stack satisfying the assumptions \Cref{para:assumptions-stacks}.
%     Let $\CL$ be a line bundle on $\CX$ and $Q \colon |\mathrm{CL}(\mathcal{X})| \to \mathbb{Q}_{\geq 0}$ be a norm on graded points \textnormal{\cite[Definition 4.1.12]{halpern2014structure}} which is algebraic \cite[Definition 2.4.9]{nunez2023refined}.
%     Assume that the numerical invariant $\mu = \langle - , \CL \rangle / \sqrt{Q(-)}$ defines a $\Theta$-stratification $\CX = \bigcup_{c} \CX_{\leq c}$ \textnormal{\cite[Definition 4.1.3]{halpern2014structure}}.
%     Let $\CS_c \coloneqq \CX_{\leq c} \setminus \CX_{< c}$ be the $\Theta$-stratum and $\CZ_c$ be the centre of the $\Theta$-strata $\CS_c$.
%     Then $\CZ_c$ admits a good moduli space $p_c \colon \CZ_c \to Z_c$ by \cite[Theorem 2.6.3, Step 3]{nunez2023refined}.

% \begin{corollary}
    
% \end{corollary}

We now turn to the Kirwan surjectivity for the Borel--Moore homology of $0$-shifted symplectic stacks.
Though it can be reduced to \Cref{thm-main--1} using the dimensional reduction theorem \eqref{eq-dim-red}, 
we will give a different proof using the purity statement that applies to a more general setting under weaker hypotheses and implies a stronger version.
We will use the following lemma.

\begin{lemma}\label{lem-homotopy-lemma}
    Let $\CY$ be a $0$-shifted symplectic stack, $\CS \subset \CY$ be a $\Theta$-stratum and $\mathrm{gr}_{\mathcal{S}} \colon \CS \to \CZ$ denote the map to its center.
    Then the map $\mathrm{gr}_{\mathcal{S}}$ is smooth. Further, there is a natural isomorphism 
    \begin{equation}\label{eq-homotopy-lemma}
        \gr_{\CS, *} \mathbb{D} \mathbb{Q}_{\CS} \cong \mathbb{D} \mathbb{Q}_{\CZ} \otimes \mathscr{L}^{-  \dim \gr_{\CS}}.
    \end{equation}
\end{lemma}

\begin{proof}
    By \cite[Corollary 5.18]{kinjo2024cohomological}, the correspondence $\CZ \xleftarrow{\gr_{\CS}} \CS \rightarrow \CY$ is a Lagrangian correspondence. In particular, there is a natural equivalence
    \[
     \mathbb{T}_{\mathrm{gr}_{\mathcal{S}}} \simeq \mathbb{L}_{\CS / \CY}[-1].    
    \]
    Since the map $\CS \to \CY$ is a closed immersion, the complex $\mathbb{L}_{\CS / \CY}[-1]$ has Tor-amplitude $(-\infty, 0]$.
    Combined with the above equivalence, we conclude that $\gr_{\CS}$ is smooth as desired.
    
    Let $\sigma_{\CS} \colon \CZ \to \CS$ be the canonical section of $\gr_{\CS}$. The second statement follows from the isomorphisms
    \[
        \gr_{\CS, *} \mathbb{D} \mathbb{Q}_{\CS} \cong \gr_{\CS, *} \gr_{\CS}^* \mathbb{D} \mathbb{Q}_{\CZ} \otimes  \mathscr{L}^{-  \dim \gr_{\CS}} \cong \sigma_{\CS}^* \gr_{\CS}^* \mathbb{D} \mathbb{Q}_{\CZ} \otimes \mathscr{L}^{-  \dim \gr_{\CS}} \cong \mathbb{D} \mathbb{Q}_{\CZ} \otimes \mathscr{L}^{- \dim \gr_{\CS}},
    \]
    where the first isomorphism follows from the smoothness of $\gr_{\CS}$ and the second isomorphism follows from \cite[Corollary 7.3]{kinjo2024cohomological}.
\end{proof}

Let $\CY$ be an $0$-shifted symplectic stack satisfying the assumptions \Cref{para:assumptions-stacks}.
Let $\ell \colon |\mathrm{CL}(\mathcal{Y})| \to \mathbb{Q}$ be a linear form \cite[Definition 2.4.1]{nunez2023refined} and $Q \colon |\mathrm{CL}(\mathcal{X})| \to \mathbb{Q}_{\geq 0}$ be a norm on graded points \textnormal{\cite[Definition 4.1.12]{halpern2014structure}}.
Assume that the numerical invariant $\mu = l / \sqrt{Q}$ defines a $\Theta$-stratification $\CY = \bigcup_{c} \CY_{\leq c}$ \textnormal{\cite[Definition 4.1.3]{halpern2014structure}}.
Let $\CS_c \coloneqq \CY_{\leq c} \setminus \CY_{< c}$ be the $\Theta$-stratum and $\gr_c \colon \CS_c \to \CZ_c$ be the map to its centre.
It follows from \cite[Theorem 2.6.3 (2)]{nunez2023refined} that $\CZ_c$ admits a good moduli space $p_c \colon \CZ_c \to Z_c$, and we have the following commutative diagram:
\[\begin{tikzcd}
	{\CZ_c} & {\CS_c} & \CY \\
	{Z_c} && Y.
	\arrow["{{\sigma_c}}", shift left, from=1-1, to=1-2]
	\arrow["{{p_c}}", from=1-1, to=2-1]
	\arrow["{\gr_c}", shift left, from=1-2, to=1-1]
	\arrow["{{\iota_c}}", from=1-2, to=1-3]
	\arrow["p", from=1-3, to=2-3]
	\arrow["{{q_c}}"', from=2-1, to=2-3]
\end{tikzcd}\]
The map $q_c$ is proper by \cite[Theorem 2.6.3 (4)]{nunez2023refined}.
Write $\CY^{\mathrm{ss}} \coloneqq \CY_{\leq 0}$, $p^{\mathrm{ss}} \coloneqq p_0$ and $q \coloneqq q_0$.
Then we have the following:

\begin{theorem}\label{thm-Kirwan-relative}
    We adopt the notations from the last paragraph.
    \begin{enumerate}
        \item[\textnormal{(1)}] The natural restriction maps
        \[
         p_* \mathbb{D}\mathbb{Q}_{\CY} \to    q_* p_{ *}^{\mathrm{ss}} \mathbb{D}\mathbb{Q}_{\CY^{\mathrm{ss}}}, \quad \mathrm{H}^{\mathrm{BM}}_*(\CY) \to  \mathrm{H}^{\mathrm{BM}}_*(\CY^{\mathrm{ss}})
        \]
        are split surjections.
        \item[\textnormal{(2)}] Set $d_c \coloneqq \dim (\CS_c / \CZ_c)$. Then there are decompositions
        \[
         p_*   \mathbb{D}\mathbb{Q}_{\CY} \cong \bigoplus_c \left( q_{c, *} p_{c, *} \mathbb{D}\mathbb{Q}_{\CZ_c} \otimes \mathscr{L}^{- d_c} \right), \quad \mathrm{H}^{\mathrm{BM}}_*(\CY) \cong \bigoplus_c \left( \mathrm{H}^{\mathrm{BM}}_*(\CZ_c)  \otimes \mathscr{L}^{- d_c} \right).
        \]
    \end{enumerate}
\end{theorem}

\begin{proof}
    The first statement follows from the purity of $p_* \mathbb{D}\mathbb{Q}_{\CY}$ and $q_* p_{ *}^{\mathrm{ss}} \mathbb{D}\mathbb{Q}_{\CY^{\mathrm{ss}}}$ proved in \Cref{cor-relative-purity}.
    Now we prove the second statement. Let $j_{\leq c} \colon \CY_{\leq c} \hookrightarrow \CY$ and $j_{< c} \colon \CY_{< c} \hookrightarrow \CY$ denote the natural open immersion.
    Consider the fibre sequence
    \begin{equation}\label{eq-fibre-seq}
      (p \circ \iota_c)_*  \mathbb{D} \mathbb{Q}_{\CS_c}  \to   (p \circ j_{\leq c})_* \mathbb{D}\mathbb{Q}_{\CY_{\leq c}} \to (p \circ j_{< c})_* \mathbb{D}\mathbb{Q}_{\CY_{< c}}.
    \end{equation}
    We claim the splitting of this fibre sequence and the purity of $(p \circ j_{\leq c})_* \mathbb{D}\mathbb{Q}_{\CY_{\leq c}}$ by induction.
    By the induction hypothesis, we may assume that the complex $(p \circ j_{< c})_* \mathbb{D}\mathbb{Q}_{\CY_{< c}}$ is pure.
    On the other hand, we have an isomorphism
    \[
        (p \circ \iota_c)_*  \mathbb{D} \mathbb{Q}_{\CS_c} \cong q_{c, *}  p_{c, *} \gr_{c, *}  \mathbb{D} \mathbb{Q}_{\CS_c} \xrightarrow[\cong]{\textnormal{\eqref{eq-homotopy-lemma}}} q_{c, *}  p_{c, *} \mathbb{D} \mathbb{Q}_{\CZ_c} \otimes \mathscr{L}^{ d_c}.
    \]
    Since $q_c$ is proper, using \Cref{cor-relative-purity} for $\CZ_c$, we see that the complex $q_{c, *}  p_{c, *} \mathbb{D} \mathbb{Q}_{\CZ_c} $ is pure. In particular, $(p \circ \iota_c)_*  \mathbb{D} \mathbb{Q}_{\CS_c} $ is pure.
    Therefore the fibre sequence \eqref{eq-fibre-seq} splits, and also we conclude that $ (p \circ j_{\leq c})_* \mathbb{D}\mathbb{Q}_{\CY_{\leq c}} $ is pure.
\end{proof}

\begin{remark}
    A similar Kirwan surjectivity statement for the Borel--Moore homology of $0$-shifted symplectic stacks, together with a sketch of proof, was obtained by Halpern-Leistner \cite[Corllary 4.1]{halpern2015theta} via a categorical argument.
    His version works more generally for derived stacks $\CY$ with $\mathbb{T}_{\CY} \simeq \mathbb{L}_{\CY}$, and more importantly, also for stacks without good moduli spaces, as long as $\CY$ is a derived quotient stack.

    Our version, by contrast, has an advantage of working relatively over the good moduli spaces and of applying also to non-quotient stacks.
    We also have a version of the Kirwan surjectivity for stacks without good moduli spaces as explained below.
\end{remark}

We prove a different version of the Kirwan surjectivity statement for $0$-shifted symplectic stacks, which works for stacks without good moduli spaces, but instead has a certain properness properties.

\begin{theorem}
    Let $\CY$ be a $0$-shifted symplectic stack with affine diagonal. Assume that $\CY$ admits a $\Theta$-stratification $ \CY_{\leq c}  \subset \CY$ with the following properties:
    \begin{enumerate}
        \item[\textnormal{(1)}] The stack $\CX_{\leq c}$ has quasi-compact connected components for each $c$.
        \item[\textnormal{(2)}] Let $\CS_c$ be a $\Theta$-stratum and $\CZ_c$ be the centre. Then $\CZ_{c}$ admits a good moduli space $Z_c$.
        \item[\textnormal{(3)}] There is a $\mathbb{G}_{\mathrm{m}}$-action on $\CZ_c$ such that the fixed point $Z_c^{\mathbb{G}_{\mathrm{m}}}$ is proper.
    \end{enumerate}
    Then The restriction map $\mathrm{H}^{\mathrm{BM}}_*(\CY) \to \mathrm{H}^{\mathrm{BM}}_*(\CY^{\mathrm{ss}})$ is surjective and there is a direct product decomposition
    \[
        \mathrm{H}^{\mathrm{BM}}_*(\CY) \cong \prod_c \left( \mathrm{H}^{\mathrm{BM}}_*(\CZ_c)  \otimes \mathscr{L}^{- d_c} \right).
    \]
\end{theorem}

\begin{proof}
    \Cref{cor-purity-conj} and \Cref{lem-homotopy-lemma} imply that the Borel--Moore homology $\mathrm{H}^{\mathrm{BM}}_*(\CS_c)$ is pure.
    In particular, arguing as the proof of \Cref{thm-Kirwan-relative}, we see that the fibre sequence
    \[
    \mathrm{H}^{\mathrm{BM}}_*(\CS_c) \to \mathrm{H}^{\mathrm{BM}}_*(\CX_{\leq c}) \to \mathrm{H}^{\mathrm{BM}}_*(\CX_{<c})    
    \]
    splits. Hence we obtain the desired statement.
\end{proof}

Applying the above theorem to the moduli stack of $G$-Higgs bundles, we obtain the following:

\begin{corollary}
    Let $C$ be a smooth projective curve, $G$ a reductive algebraic group, $\CH_G$ the moduli stack of $G$-Higgs bundles on $C$ and $\CH_G^{\mathrm{ss}} \subset \CH_G$ the semistable locus.
    Then the restriction map 
    \[
        \mathrm{H}^{\mathrm{BM}}_*(\CH_G) \to  \mathrm{H}^{\mathrm{BM}}_*(\CH_G^{\mathrm{ss}}) 
    \]
    is surjective.
\end{corollary}

We note that, in contrast, \cite{cliff2018kirwan} shows that the Kirwan map \emph{fails} to be surjective for the moduli stack of $G$-Higgs bundles when one works with ordinary cohomology.

\appendix

\section{Lemmas on homological algebra}

In this appendix, we prove two lemmas on homological algebra used in the proof of \Cref{thm-cohint-satisfy}.

\begin{lemma}\label{lem-injective}
    Let $X$ be an algebraic variety. Let $f \colon E_1 \to E_2$ be a morphism of pure Hodge complexes bounded below.
    Assume that the induced map on the perverse cohomology $\pH(f) \colon \pH(E_1) \to \pH(E_2)$ is injective. Then:
    \begin{enumerate}
        \item The map $\mathrm{H}^*(f) \colon \mathrm{H}^*(E_1) \to \mathrm{H}^*(E_2)$ induced on global sections is injective.
        \item Let $C$ be the cofibre of the map $f$. Then, there exists a short exact sequence
        \[
           0 \to \mathrm{H}^*(E_1) \to \mathrm{H}^*(E_2) \to \mathrm{H}^*(C) \to 0.
        \]
    \end{enumerate}
\end{lemma}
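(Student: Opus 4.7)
The plan is to establish (1) using Saito's decomposition theorem for pure Hodge complexes together with the semisimplicity of pure Hodge modules of a fixed weight, and then to deduce (2) as a formal consequence of (1) via the long exact sequence of hypercohomology attached to the cofibre triangle $E_1 \xrightarrow{f} E_2 \to C \to E_1[1]$.

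For (1), the decomposition theorem implies that the perverse hypercohomology spectral sequence
\[
E_2^{p,q} = \mathrm{H}^q(X, \pH^p(E_j)) \Longrightarrow \mathrm{H}^{p+q}(X, E_j)
\]
degenerates at its $E_2$-page for $j = 1, 2$. By functoriality, the associated graded of $\mathrm{H}^*(f)$ with respect to the perverse filtration is canonically identified with $\bigoplus_p \mathrm{H}^*(\pH^p(f))$. Now each $\pH^p(f)$ is an injection of pure Hodge modules of the same weight $w + p$, and since the category of pure Hodge modules of a fixed weight is semisimple, any such injection splits as a direct summand inclusion. Hypercohomology preserves split injections, so every $\mathrm{H}^*(\pH^p(f))$ is injective, whence the associated graded of $\mathrm{H}^*(f)$ is injective. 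Finally, I would check that the perverse filtration on $\mathrm{H}^n(E_j)$ is bounded in each degree: the bounded-below hypothesis forces $\pH^p(E_j) = 0$ for $p \ll 0$, and since a perverse sheaf on $X$ has hypercohomology supported in a bounded range of degrees, only finitely many terms of the spectral sequence with $p + q = n$ are nonzero. A filtered morphism between bounded filtrations whose associated graded is injective is itself injective, giving (1).

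Once (1) is in hand, (2) is immediate: in the long exact sequence
\[
\cdots \to \mathrm{H}^{n-1}(C) \to \mathrm{H}^n(E_1) \xrightarrow{\mathrm{H}^n(f)} \mathrm{H}^n(E_2) \to \mathrm{H}^n(C) \to \mathrm{H}^{n+1}(E_1) \to \cdots
\]
every connecting morphism $\mathrm{H}^{n}(C) \to \mathrm{H}^{n+1}(E_1)$ lands in the kernel of the injection $\mathrm{H}^{n+1}(f)$ and therefore vanishes, cleaving the long exact sequence into the desired short exact sequences.

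The main obstacle is that the splittings provided by the decomposition theorem are non-canonical and need not intertwine $f$, so one cannot directly read off $\mathrm{H}^*(f)$ from a direct sum decomposition of $\mathrm{H}^*(E_j)$. The remedy is to argue throughout with the intrinsic perverse filtration, whose associated graded is functorial in $E$ and therefore visibly records $\pH(f)$; the semisimplicity of pure Hodge modules then does the rest.
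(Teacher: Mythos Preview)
Your proposal is correct and follows essentially the same approach as the paper's proof: both arguments use the perverse filtration, identify its associated graded with $\mathrm{H}^*(\pH(-))$, invoke semisimplicity of pure Hodge modules to conclude that $\pH(f)$ is split injective, and then deduce injectivity of $\mathrm{H}^*(f)$ from injectivity on the associated graded (the paper phrases this last step contrapositively, you phrase it directly). Your deduction of (2) from the long exact sequence is identical to the paper's.
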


\begin{proof}
    Since $E_1$ and $E_2$ are pure, the adjoint maps with respect to the perverse truncation functors
    \[
     \tau_{\mathfrak{P}}^{\leq n} (E_1) \to E_1, \quad \tau_{\mathfrak{P}}^{\leq n} (E_2) \to E_2   
    \]
    are split injections. In particular, they induce filtrations
    \[
     \mathfrak{P}^{\leq n} \mathrm{H}^*(E_1) \subset  \mathrm{H}^*(E_1), \quad   \mathfrak{P}^{\leq n} \mathrm{H}^*(E_2) \subset  \mathrm{H}^*(E_2)
    \]
    which we will call the perverse filtrations.
    There exist natural isomorphisms
    \[
     \gr_{\mathfrak{P}} \mathrm{H}^*(E_1)  \cong \mathrm{H}^*(\pH(E_1)), \quad  \gr_{\mathfrak{P}} \mathrm{H}^*(E_2)  \cong \mathrm{H}^*(\pH(E_2))
    \]
    which identify $\gr_{\mathfrak{P}} \mathrm{H}^*(f)$ and $\mathrm{H}^*(\pH(f))$.
    
    Now assume that $\mathrm{H}^*(f)$ is not injective and take an element $a \in \mathfrak{P}^{\leq n} \mathrm{H}^*(E_1) \setminus \mathfrak{P}^{\leq n - 1} \mathrm{H}^*(E_1)$ such that $\mathrm{H}^*(f)(a) = 0$.
    Let $0 \neq \bar{a} \in \gr_{\mathfrak{P}}^n(\mathrm{H}^*(E_1))$ be the image of $a$. Then we have $\gr_{\mathfrak{P}} \mathrm{H}^*(f)(\bar{a}) = 0$.
    In particular, $\gr_{\mathfrak{P}} \mathrm{H}^*(f)$ is not injective, hence neither is $\mathrm{H}^*(\pH(f))$ thanks to the identification of these maps.
    On the other hand, the injectivity of $\pH(f)$ together with the purity of $E_1$ and $E_2$ imply the split injectivity of $\pH(f)$, that is $\pH(f)$ admits a retraction.
    In particular, $\mathrm{H}^*(\pH(f))$ also admits a retraction, and so is injective, which leads to the contradiction.
    This proves (1). The second claim follows from (1) and the long exact sequence associated with the fibre sequence $E_1 \to E_2 \to C$.
\end{proof}

\begin{lemma}\label{lem-surjective}
    Let $X$ be an algebraic variety, $x \in X$ be a point and $f \colon E_1 \to E_2$ be a morphism of pure Hodge complexes bounded below.
    Assume that $f$ is an isomorphism over $X \setminus \{ x \}$ and the map induced on the global section $\mathrm{H}^*(f)$ is surjective.
    Then the map induced on the perverse cohomology $\pH(f)$ is a split surjection.
\end{lemma}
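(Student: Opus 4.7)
The plan is to reduce the split-surjectivity of $\pH(f)$ to the vanishing of its cokernel, and then to verify this vanishing by passing to global cohomology. Once $\pH(f)$ is known to be surjective in the abelian category of cohomologically graded pure Hodge modules, Saito's semisimplicity theorem provides a section automatically, since any surjection onto a pure object splits.

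Let $K \coloneqq \coker(\pH(f))$, computed in the abelian category of cohomologically graded pure Hodge modules. Because $f$ is an isomorphism over $X\setminus\{x\}$, so is $\pH(f)$, and hence $K$ is supported at the point $\{x\}$. A cohomologically graded pure Hodge module supported at $\{x\}$ is nothing but a direct sum $\bigoplus_i V^i[-i]$ of shifted skyscraper mixed Hodge structures at $x$, so $K=0$ if and only if $\mathrm{H}^*(K)=0$. By semisimplicity, the short exact sequence
\[
 0\to \im \pH(f) \to \pH(E_2) \to K \to 0
\]
splits, yielding a direct sum decomposition $\mathrm{H}^*(\pH(E_2)) \cong \mathrm{H}^*(\im \pH(f)) \oplus \mathrm{H}^*(K)$ in which the image of $\mathrm{H}^*(\pH(f))$ is contained in the first summand. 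Thus it suffices to prove that $\mathrm{H}^*(\pH(f))$ is surjective.

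For this, I would invoke exactly the machinery used in the proof of \Cref{lem-injective}: purity of $E_1$ and $E_2$ implies that the perverse truncation maps $\tau^{\leq n}_{\mathfrak{P}} E_i \to E_i$ are split injections, so the perverse filtration on $\mathrm{H}^*(E_i)$ admits a canonical splitting with associated graded naturally isomorphic to $\mathrm{H}^*(\pH(E_i))$, and under this identification $\gr_{\mathfrak{P}} \mathrm{H}^*(f) = \mathrm{H}^*(\pH(f))$. By the strictness of morphisms between pure Hodge complexes with respect to the perverse filtration (a consequence of the degeneration of the perverse spectral sequence on both sides), surjectivity of the total map $\mathrm{H}^*(f)$ passes to the associated graded, giving the surjectivity of $\gr_{\mathfrak{P}} \mathrm{H}^*(f) = \mathrm{H}^*(\pH(f))$ that we need. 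The only delicate point is this strictness assertion; the rest is formal, flowing from semisimplicity of pure Hodge modules together with the support condition on $K$.
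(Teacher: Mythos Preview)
Your argument hinges entirely on the strictness of $\mathrm{H}^*(f)$ with respect to the perverse filtration, as you yourself flag. The justification you give --- degeneration of the perverse spectral sequence on both sides --- is not sufficient: degeneration for $E_1$ and $E_2$ separately says nothing about how a morphism between them interacts with the filtrations. Concretely, by semisimplicity each $E_i$ splits as $\bigoplus_j \pH^j(E_i)[-j]$, and a morphism $f$ then has components $f_{ij}\in\Ext^{i-j}_{\mathrm{MHM}}(\pH^i(E_1),\pH^j(E_2))$ for $i\geq j$. The off-diagonal components with $i>j$ need not vanish: $\pH^i(E_1)$ has weight $i$ and $\pH^j(E_2)$ has weight $j$, and $\Ext^{i-j}$ between pure objects of weights $i$ and $j$ is not forced to be zero (already for $i-j=1$, such extensions exist, e.g.\ in $\mathrm{MHS}$). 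A nonzero off-diagonal component destroys strictness, and with it the passage from surjectivity of $\mathrm{H}^*(f)$ to surjectivity of $\gr_{\mathfrak P}\mathrm{H}^*(f)$. Weight-strictness of morphisms of mixed Hodge structures does not rescue you either, since on $\mathrm{H}^k$ all perverse graded pieces have the same weight.

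The paper's proof sidesteps this obstruction by not working with the perverse filtration at all. Instead it splits off from $E_1$ the maximal summand $K$ with no skyscraper pieces at $x$; then $K\hookrightarrow E_2$ is split injective on $\pH$, and the induced map $g\colon C_1\to C_2$ on the complementary summands lives entirely over the point $x$, where $\pH$ and $\mathrm{H}^*$ agree. Surjectivity of $\mathrm{H}^*(f)$ then forces surjectivity of $\mathrm{H}^*(g)=\pH(g)$, and $\pH(f)$ is identified with $\id_{\pH(K)}\oplus\pH(g)$. If you want to salvage your approach, you would need to prove directly that the off-diagonal components of $f$ do not contribute to $\mathrm{H}^*(f)$ modulo the image of the diagonal part --- but that is essentially what the paper's decomposition achieves.
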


\begin{proof}
    Using the purity of $E_1$, we may take a direct summand $K \subset E_1$ such that $K$ does not contain a shift of the constant sheaf supported over $\{ x \}$ as a direct summand and $K |_{X \setminus \{ x \}} \cong E_1 |_{X \setminus \{ x \}}$.
    We write
    \[
     C_1 \coloneqq \cofib(K \to E_1), \quad   C_2 \coloneqq \cofib(K \to E_1 \to E_2).  
    \]
    Let $g \colon C_1 \to C_2$ be the naturally induced map.
    Using \Cref{lem-injective} (2), we obtain a map of short exact sequences
    \[\begin{tikzcd}
        0 & {\mathrm{H}^*(K)} & {\mathrm{H}^*(E_1)} & {\mathrm{H}^*(C_1)} & 0 \\
        0 & {\mathrm{H}^*(K)} & {\mathrm{H}^*(E_2)} & {\mathrm{H}^*(C_2)} & {0.}
        \arrow[from=1-1, to=1-2]
        \arrow[from=1-2, to=1-3]
        \arrow[equals, from=1-2, to=2-2]
        \arrow[from=1-3, to=1-4]
        \arrow["{\mathrm{H}^*(f)}", from=1-3, to=2-3]
        \arrow[from=1-4, to=1-5]
        \arrow["{\mathrm{H}^*(g)}", from=1-4, to=2-4]
        \arrow[from=2-1, to=2-2]
        \arrow[from=2-2, to=2-3]
        \arrow[from=2-3, to=2-4]
        \arrow[from=2-4, to=2-5]
    \end{tikzcd}\]
    The surjectivity of $\mathrm{H}^*(f)$ implies the surjectivity of $\mathrm{H}^*(g)$.
    Since $C_1$ and $C_2$ are supported on $\{ x \}$, it implies the surjectivity of the map $\pH(g) \colon \pH(C_1) \to \pH(C_2)$.
    Since the map $\pH(f)$ is identified with $\id_{\pH(K)} \oplus \pH(g)$, we obtain the desired claim.
\end{proof}

\section{Geometric proof of the cohomological bound}\label{section-geometric-proof}

Here, we give a geometric proof of \Cref{thm-cohomological-bound}.
We use the following statement, which follows from the shuffle presentation of the cohomological Hall induction (see e.g. \cite[Proposition 4.7]{puadurariu2021noncommutative}).

\begin{lemma}\label{lem-commutes-res}
    Let $G$ be a connected reductive group, $T \subset G$ a maximal torus and $V$ be a symmetric representation of $G$.
    Let $\lambda$ be a cocharacter of $T$.
    Then the following diagram commutes up to a non-zero scalar:
    \[\begin{tikzcd}
        {\mathrm{H}^{* + d}(V^{\lambda}   /T)_{\mathrm{vir}}} & {\mathrm{H}^{* + d}(V   /T)_{\mathrm{vir}}} \\
        {\mathrm{H}^*((V \times \mathfrak{g}_{\mathrm{ad}})^{\lambda} /L_{\lambda})_{\mathrm{vir}}} & {\mathrm{H}^*((V \times \mathfrak{g}_{\mathrm{ad}}) / G)_{\mathrm{vir}}.}
        \arrow["{\sum_{w \in W} \left(*_{w(\lambda)}^{\mathrm{Hall}}\right)}", from=1-1, to=1-2]
        \arrow[from=2-1, to=1-1]
        \arrow["{*_{\lambda}^{\mathrm{Hall}}}"', from=2-1, to=2-2]
        \arrow[from=2-2, to=1-2]
    \end{tikzcd}\]
    Here, $d =  \dim T - \dim \mathrm{Z}(G)$, $W$ is the Weyl group of $G$ and the vertical maps are the restriction homomorphisms.
\end{lemma}

Now we provide a proof of \Cref{thm-cohomological-bound}.
% Assume first that there is a non-trivial cocharacter $\lambda \colon \mathbb{G}_{\mathrm{m}} \to T$ such that $V^{\lambda} = V$ holds.
% In this case, the map
% \[
%  \mathrm{H}^*((V \times \mathfrak{g}_{\mathrm{ad}})^{\lambda} / L_{\lambda}) \cong \mathrm{H}^*((V \times \mathfrak{g}_{\mathrm{ad}}^{\lambda}) / L_{\lambda}) \xrightarrow{*^{\mathrm{Hall}}_{\lambda}} \mathrm{H}^*((V \times \mathfrak{g}_{\mathrm{ad}}) / G) 
% \]
% is clearly surjective by the shuffle presentation. Therefore $ P_{(V \times \mathfrak{g}_{\mathrm{ad}}) / G} = 0$ holds in this case.
% Now we assume that such cocharacter does not exist.
First, \Cref{lem-commutes-res} implies a natural isomorphism
\[
 P_{(V \times \mathfrak{g}_{\mathrm{ad}}) / G}  \cong (P_{V / T})^{W} [d].
\]
% If there is a non-trivial cocharacter $\lambda$ of $T$ with $V^{\lambda} = V$,
% the above isomorphism implies $P_{(V \times \mathfrak{g}_{\mathrm{ad}}) / G} = 0$.
% Therefore we may assume that such cocharacter does not exist.
Since $V$ is an orthogonal $T$-representation, \cite[Theorem 9.1.12]{bu2025cohomology} implies an isomorphism
\[
    P_{V / T} \cong \IH^{\circ, *}(V \GIT T).    
\]
Using the Artin vanishing and $d \geq 0$, we obtain the desired statement.

\printbibliography

\end{document}